\documentclass[a4paper,11pt,reqno]{amsart}
\usepackage[utf8]{inputenc}
\usepackage[T1]{fontenc}
\usepackage{lmodern}
\usepackage[english]{babel}
\usepackage{microtype}
\usepackage{amsmath,amssymb,amsfonts,amsthm,esint}
\usepackage{mathtools,accents}
\usepackage{mathrsfs}
\usepackage{aliascnt}
\usepackage{braket}
\usepackage{bm}
\usepackage[a4paper,margin=2.5cm]{geometry}
\usepackage[citecolor=blue, linkcolor=black, colorlinks]{hyperref}
\usepackage{enumerate}
\usepackage{xcolor}
\usepackage{tikz}
\usetikzlibrary{arrows.meta,decorations.pathreplacing}
\usepackage{comment}
\makeatletter
\g@addto@macro\@floatboxreset\centering
\makeatother

\makeatletter
\def\newaliasedtheorem#1[#2]#3{
	\newaliascnt{#1@alt}{#2}
	\newtheorem{#1}[#1@alt]{#3}
	\expandafter\newcommand\csname #1@altname\endcsname{#3}
}
\makeatother
\numberwithin{equation}{section}
\newtheoremstyle{slanted}{\topsep}{\topsep}{\slshape}{}{\bfseries}{.}{.5em}{}
\theoremstyle{plain}
\newtheorem{theorem}{Theorem}[section]
\newaliasedtheorem{proposition}[theorem]{Proposition}
\newaliasedtheorem{lemma}[theorem]{Lemma}
\newaliasedtheorem{corollary}[theorem]{Corollary}
\newaliasedtheorem{counterexample}[theorem]{Counterexample}
\theoremstyle{definition}
\newaliasedtheorem{definition}[theorem]{Definition}
\newaliasedtheorem{question}[theorem]{Question}
\newaliasedtheorem{openquestion}[theorem]{Open Question}
\theoremstyle{remark}
\newaliasedtheorem{remark}[theorem]{Remark}
\newaliasedtheorem{example}[theorem]{Example}

\let\altphi\phi
\let\phi\varphi
\let\varphi\altphi
\let\altphi\undefined

\let\div\undefined
\DeclareMathOperator{\div}{div}

\DeclareMathOperator{\supp}{supp}

\newfont{\tmpf}{cmsy10 scaled 2500}

\def\XXint#1#2#3{{\setbox0=\hbox{$#1{#2#3}{\int}$ }
		\vcenter{\hbox{$#2#3$ }}\kern-.6\wd0}}

\makeatletter
\def\author@andify{%
  \nxandlist
    {\unskip,\penalty-1\hspace{0.7em}\ignorespaces}%
    {\unskip,\penalty-2\hspace{0.7em}\ignorespaces}%
    {\unskip,\penalty-2\hspace{0.7em}\ignorespaces}%
}
\makeatother
\title[Flexibility for SQG with an $L^{4/3+}$ active scalar]
{Flexibility for the SQG Equation with an $L^{4/3+}$ Active Scalar}

\author{Elia Bru\`e}
\address{
Department of Decision Sciences,
Bocconi University,
Via Roentgen 1,
20136 Milan, Italy
}
\email{elia.brue@unibocconi.it}

\author{Rui Jin}
\address{
School of Mathematical Sciences, 
Shanghai Jiao Tong University,
200240 Shanghai, PR China
}
\address{
Department of Decision Sciences,
Bocconi University,
Via Roentgen 1,
20136 Milan, Italy
}
\email{jinruisjtu@gmail.com}

\author{Quoc-Hung Nguyen}
\address{
State Key Laboratory of Mathematical Sciences, Academy of Mathematics and Systems Science, Chinese Academy of Sciences, Beijing 100190, China
}
\address{Institute of mathematics, Academy of Mathematics and Systems Science, the Chinese Academy of Sciences, Beijing 100190, China
}
\email{qhnguyen@amss.ac.cn}

\begin{document}
	\begin{abstract}
		We develop a new convex-integration scheme, inspired by \cite{BCK26},
		for the inviscid surface quasi-geostrophic equation on the
		two-dimensional torus. For the explicit, nonoptimized exponent
			$\bar p=\frac{4}{3}+10^{-5},$
		we prove a flexibility theorem for weak solutions in the standard
		momentum formulation with active scalar
		\[
			\theta\in C([0,1];L^{\bar p}(\mathbb T^2)).
		\]
		More precisely, any two prescribed mean-zero states in
		$L^{\bar p}(\mathbb T^2)$ can be approximated at the initial and final
		times by such a solution. The perturbations are constructed from
		localized, concentrated traveling SQG profiles whose centers move along
		rational directions and whose radii depend on the Reynolds stress. Time
		averages of auxiliary sources along these trajectories reconstruct the
		preceding-stage stress, while a two-dimensional bilinear null-form
		estimate compensates for the derivative loss caused by the nonlocal
		constitutive law. Exploiting the time-locality of the iteration, we also
		obtain a dense subset of the mean-zero space
		$L^{\bar p}(\mathbb T^2)$ such that every initial datum in this subset
		admits at least two distinct momentum weak solutions. Thus, the
		construction establishes both flexibility and nonuniqueness beyond the
		concentration-critical exponent $p=4/3$.
	\end{abstract}
	\maketitle 
    
	\section{Introduction}
	We consider the inviscid surface quasi-geostrophic (SQG) equation on the
	two-dimensional torus $\mathbb T^2:=\mathbb R^2/\mathbb Z^2$:
	\begin{equation}\label{eq:SQG-introduction}
		\left\{
		\begin{aligned}
			\partial_t\theta+u\cdot\nabla\theta&=0,\\
			u&=\nabla^\perp\Lambda^{-1}\theta,
		\end{aligned}
		\right.
	\end{equation}
	where $\theta=\theta(x,t)$ is the active scalar,
	$\Lambda=(-\Delta)^{1/2}$, and
	$\nabla^\perp=(-\partial_2,\partial_1)$. The transport velocity
	$u$ is divergence-free. Throughout the paper, the active scalar is
	assumed to have zero spatial mean at every time:
	\[
	\int_{\mathbb T^2}\theta(x,t)\,dx=0.
	\]
	
	The SQG equation arises as an idealized model for boundary buoyancy
    (or potential temperature) in rapidly rotating, stably stratified
    geophysical flows~\cite{HPGS95}. It also shares the formal
    transport--stretching structure of the three-dimensional Euler
    vorticity equation, as emphasized by Constantin, Majda, and
    Tabak~\cite{CMT94}. Global regularity of smooth inviscid SQG solutions remains open: it is unknown whether smooth initial data can develop a singularity in finite time. Here we instead address nonuniqueness of low-regularity momentum weak solutions.

	\subsection{The momentum formulation and the
		\texorpdfstring{$L^{4/3}$}{L4/3} endpoint}
	
	In addition to the active scalar $\theta$ and the transport velocity
	$u$, we introduce the potential velocity
	\begin{equation}\label{eq:potential-velocity-introduction}
		v
		:=
		\nabla^\perp(-\Delta)^{-1}\theta
		=
		\Lambda^{-1}u.
	\end{equation}
	Then
	\[
	\operatorname{div}v=0,
	\qquad
	\theta=-\operatorname{curl}v,
	\qquad
	u=\Lambda v.
	\]
	
	For smooth solutions, equation \eqref{eq:SQG-introduction} is equivalent
	to the SQG momentum equation
	\begin{equation}\label{eq:SQG-momentum-introduction}
		\left\{
		\begin{aligned}
			\partial_t v
			+u\cdot\nabla v
			-(\nabla v)^Tu
			+\nabla p
			&=0,\\
			\operatorname{div}v&=0,\\
			u&=\Lambda v.
		\end{aligned}
		\right.
	\end{equation}
	This formulation was used in the convex integration construction of
	Buckmaster, Shkoller, and Vicol \cite{BSV19}.
	
	The nonlinear term in \eqref{eq:SQG-momentum-introduction} admits a
	commutator formulation which remains meaningful at considerably lower
	regularity than its pointwise expression suggests. More precisely,
	let $I\subset\mathbb R$ be an open interval. A
	divergence-free vector field
	\[
	v\in L^2_{\mathrm{loc}}(I;H^{1/2}(\mathbb T^2))
	\]
	with associated scalar $\theta=-\operatorname{curl}v$
	is a weak solution of \eqref{eq:SQG-momentum-introduction} if
	\begin{align}\label{eq:weak-momentum-introduction}
		\int_I
		\Big(
		\langle v_i,\partial_t\phi_i\rangle
		+
		\langle\Lambda v_j,v_i\partial_j\phi_i\rangle
		-
		\frac12
		\langle\partial_i v_j,[\Lambda,\phi_i]v_j\rangle
		\Big)\,dt
		=0
	\end{align}
	for every divergence-free test function
	$\phi\in C_c^\infty(I\times\mathbb T^2;\mathbb R^2)$.
	Here $\langle\cdot,\cdot\rangle$ denotes the spatial distributional
	pairing on $\mathbb T^2$, and repeated indices are summed.
	In this paper, the expression \emph{weak solution of SQG} always means
	a solution in this momentum sense. When $p<2$, the product $u\theta$
	in the scalar transport equation need not be integrable a priori, so
	the momentum formulation is not merely a change of notation.
	
	Suppose now that $1<p<\infty$ and $\theta\in L^p(\mathbb T^2)$. By
	\eqref{eq:potential-velocity-introduction} and the boundedness of the
	Riesz transforms,
	\[
	v\in W^{1,p}(\mathbb T^2),
	\qquad
	C_p^{-1}\|\theta\|_{L^p}
\leq \|\nabla v\|_{L^p}
\leq C_p\|\theta\|_{L^p}.
	\]
	The Sobolev embedding
	\[
	W^{1,p}(\mathbb T^2)\hookrightarrow H^{1/2}(\mathbb T^2)
	\]
	holds when $p\ge4/3$. Thus $L^{4/3}$ is the natural lower threshold,
	within the Lebesgue scale, to obtain the standard momentum class
	from an arbitrary $L^p$ scalar. 

    Global weak solutions on $\mathbb R^2$ with initial data in
$L^2(\mathbb R^2)$ were first constructed by Resnick~\cite{Res95}.
Marchand~\cite{Mar08} subsequently extended this existence theory to initial data in $L^p(\mathbb R^2)$, $p>4/3$, 
and also established global weak existence for initial data in
$\dot H^{-1/2}(\mathbb R^2)$.
More recently, De Rosa, Latocca, and Park~\cite{DRLP25} treated the
endpoint $p=4/3$ on $\mathbb T^2$: for every initial datum
$\theta_0\in L^{4/3}(\mathbb T^2)$, they constructed a global weak
solution obtained as a vanishing-viscosity limit.
Building on this work, De Rosa and Yuzbasioglu~\cite{DRY26} extended the endpoint vanishing-viscosity limit theory to the generalized SQG family.
In a complementary direction, Constantin, Ignatova, and
Nguyen~\cite{CIN26} established global weak existence on $\mathbb R^2$
and on smooth bounded domains for arbitrary initial data in the
critical Lorentz space $L^{4/3,2}$, using a different
approximation that regularizes both the advecting velocity and the
initial data.

\subsection{Main result}
   This paper develops a new convex-integration scheme
for weak SQG solutions whose active scalar $\theta\in C_t L^{\bar{p}}$ with 
\begin{equation}\label{eq:explicit-pbar-introduction}
		\bar p
		:=
		\frac43+\frac{1}{100\,000}
		>\frac43,
	\end{equation}
and therefore crosses the $L^{4/3}$ thresholds. The exponent $\bar{p}$ is
explicit but not optimized.
The reason why the gain is
small is explained in the overview below and quantified in
Section~\ref{subsec:parameter-compatibility}.

Our first theorem shows flexibility within the class $L^{\bar{p}}$: any two mean-zero states can be
approximately connected by a weak solution of SQG. 
	\begin{theorem}[Flexibility]\label{thm:introduction-flexibility}
		Let $\bar p$ be given by
		\eqref{eq:explicit-pbar-introduction}. Let $\theta_{\mathrm{start}},
		\theta_{\mathrm{end}}
		\in L^{\bar p}(\mathbb T^2)$
		be mean-zero scalar functions. Then for every $\varepsilon>0$, there
		exists a weak solution
		\[
		\theta\in
		C([0,1];L^{\bar p}(\mathbb T^2))
		\]
        to \eqref{eq:SQG-introduction} in the momentum sense of
		\eqref{eq:weak-momentum-introduction}, with zero spatial mean at every time,
		such that
		\begin{equation}\label{eq:introduction-endpoint-flexibility}
			\|\theta(\cdot,0)-\theta_{\mathrm{start}}\|_{L^{\bar p}}
			\le\varepsilon,
			\qquad
			\|\theta(\cdot,1)-\theta_{\mathrm{end}}\|_{L^{\bar p}}
			\le\varepsilon.
		\end{equation}
		Consequently, the corresponding potential and transport velocity satisfy
		\[
		v=\nabla^\perp(-\Delta)^{-1}\theta
		\in C([0,1];W^{1,\bar p}(\mathbb T^2))
		\hookrightarrow C([0,1];L^4(\mathbb T^2)),
		\quad
		u=\Lambda  v
		\in C([0,1];L^{\bar p}(\mathbb T^2)).
		\]
	\end{theorem}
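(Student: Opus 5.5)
The proof will be a convex-integration iteration on the SQG--Reynolds system in momentum form. A smooth mean-zero triple $(v_q,p_q,R_q)$ on $[0,1]\times\mathbb T^2$ solves
\[
\partial_t v_q+u_q\cdot\nabla v_q-(\nabla v_q)^Tu_q+\nabla p_q=\div R_q,\qquad u_q=\Lambda v_q,
\]
with $R_q$ a symmetric traceless matrix field. The iterative proposition will assert the following. If $\|R_q\|_{L^1_{t,x}}\le\delta_{q+1}$, there is a new triple with
\[
\|\theta_{q+1}-\theta_q\|_{C_tL^{\bar p}}\lesssim\delta_{q+1}^{\gamma},\qquad \|R_{q+1}\|_{L^1}\le\delta_{q+2}.
\]
The perturbation $w_{q+1}=v_{q+1}-v_q$ is supported in time where $R_q\neq0$. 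Time-locality lets us keep the endpoints essentially untouched once they are fixed.

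The starting triple is built from $\theta_{\mathrm{start}}$ and $\theta_{\mathrm{end}}$. We mollify both to get smooth $\theta^{s},\theta^{e}$ that are $\varepsilon/2$-close in $L^{\bar p}$. We set $\theta_0(t)=\chi(t)\,\mathcal S^{s}(t)+(1-\chi(t))\,\mathcal S^{e}(t)$, where $\mathcal S^{s}$ is the smooth SQG solution on a short time interval $[0,\tau]$ from $\theta^{s}$, $\mathcal S^{e}$ is the one solved backward on $[1-\tau,1]$ from $\theta^{e}$, and $\chi$ is a cutoff equal to $1$ near $0$ and $0$ near $1$. If the smooth local solutions do not reach the middle, we instead interpolate by $\theta^s,\theta^e$ frozen in time in the middle region. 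The error $R_0$ is obtained by applying an inverse-divergence operator to the residual, and it vanishes near $t=0$ and $t=1$. Each stage only modifies times where $R_q\ne0$, up to a shrinking neighborhood. Hence $\theta(0)$ and $\theta(1)$ differ from $\theta^{s},\theta^{e}$ by at most $\sum_q\delta_{q+1}^\gamma\le\varepsilon/2$. The limit $\theta=\lim\theta_q$ exists in $C_tL^{\bar p}$ and has zero mean. Since $\bar p>4/3$, $v_q\to v$ in $C_tH^{1/2}$, so each term in \eqref{eq:weak-momentum-introduction} passes to the limit, because the commutator form is continuous on $H^{1/2}$. Also $R_q\to0$ in $L^1$, so $v$ is a weak solution. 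The final regularity claims follow from Riesz-transform bounds and the embedding $W^{1,\bar p}\hookrightarrow L^4$.

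The perturbation itself follows the abstract. We take concentrated traveling SQG profiles $\Theta_{r}$ of radius $r$, namely compactly supported, vortex-patch-like or smooth rotating or translating solutions. Their centers move along rational directions $\xi\in\Lambda_k$ with speed $\sim$ (amplitude)$/r$, and on $\mathbb T^2$ these paths are periodic closed geodesics. We choose the radii $r$ depending locally on $|R_q|$, with amplitudes $a_\xi(R_q)$ given by a geometric decomposition lemma: $R_q=\sum_\xi a_\xi^2\,\xi\otimes\xi$ plus a trace part. Each profile is an exact solution, so its self-interaction produces only a transport-type source. Averaging along the trajectory over one period reconstructs $a_\xi^2\xi\otimes\xi$, which cancels $R_q$ up to a time-derivative term. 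That term is handled by a temporal corrector, as in the time-averaging scheme of \cite{BCK26}. The $L^{\bar p}$ norm of $\theta$ scales like $a\,r^{2/\bar p-2+\cdots}$, and the concentration is set so that this is small precisely when $\bar p$ is just above $4/3$.

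The main obstacle is the estimate of the new stress. It includes the interaction $\Lambda w\otimes v_q$ and the cross terms between different profiles, and the nonlocal law $u=\Lambda v$ costs one derivative. We will write these cross terms in the commutator form $[\Lambda,\phi]$. We then apply a two-dimensional bilinear null-form estimate, which exploits that $\nabla^\perp$ paired with $\Lambda$ gains back a derivative on separated frequencies. This bounds $\|R_{q+1}\|_{L^1}$ by a positive power of $r$. The delicate point is the parameter bookkeeping: the concentration gain must beat the derivative loss and the oscillation and time-corrector errors simultaneously. That yields only a margin of about $10^{-5}$ above $4/3$, and we would verify it numerically against the compatibility inequalities.
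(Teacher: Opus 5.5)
Your endpoint mechanism has a genuine gap. The whole argument rests on the claim that $w_{q+1}$ is supported in time where $R_q\neq0$, so that the zero-stress collar near $t=0,1$ built from local smooth SQG solutions survives the iteration. Nothing you describe yields this, and the scheme you are adapting violates it at every stage. There are two separate obstructions.
\begin{enumerate}
\item Mollifying $v_q$ creates the commutator $\mathcal C_\ell=N(u_\ell,v_\ell)-(N(u_q,v_q))_\ell$. It is nonzero wherever $v_q$ is nontrivial, so $R_\ell\neq0$ near the endpoints even where $R_q=0$.
\item The moving-profile construction needs the floor $a_i\geq\delta_{q+1}$ from the decomposition lemma. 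The radius $r_i^k=r_{q+1}(a_i^k)^{2/3}$ must stay bounded below, and $\|\nabla\log r_i^k\|_{L^\infty}\le\delta_{q+1}^{-1}\|a_i^k\|_{C^1}$ enters the time-derivative and $R_i^k$ bounds. Consequently a block with amplitude $\eta_i^k\geq2\delta_{q+1}^{1/2}$ is switched on in every coarse interval, wherever $R_q$ vanishes or not.
\end{enumerate}
The collar is therefore destroyed at the first step, and the local smooth solutions buy nothing. Making time-locality true would need machinery you have not supplied, for example not mollifying on the collar and switching off blocks on coarse intervals where $R_\ell\equiv0$.

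Your fallback bound does not rescue the argument. The iteration requires $\|R_0\|_{L^1}\le\delta_1$ with $R_0$ fixed by the data, so $\delta_1$ is not small; in the paper $\delta_1=\lambda_1^\beta$. The first perturbations then have size at least of order $\delta_1^{1/2}$, so $\sum_q\delta_{q+1}^\gamma\le\varepsilon/2$ is false. Endpoint control must come from a structural property of the perturbation at the endpoint times, not from summing increments.

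The paper gets that property from the time grid. Each $V_i^k$ carries a cutoff $\zeta_i^k$ supported inside $\mathcal T_i^k$. The corrector $-Q_{q+1}=\mathbb P_{\mathrm L}\int_{k\tau_{q+1}}^t(\mathcal U_{q+1}-P_{\tau_{q+1}}\mathcal U_{q+1})\,ds$ integrates a zero-average oscillation from the left end of the current coarse interval. Hence both vanish at every $t=k\tau_{q+1}$, and since $\tau_{q+1}^{-1}=\lambda_{q+1}^\kappa\in\mathbb N$, the times $0$ and $1$ are grid points. This gives $v_{q+1}(\cdot,m)=v_\ell(\cdot,m)$ for $m=0,1$. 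The only endpoint drift is then the mollification error $\|u_\ell-u_q\|_{L^\infty_tL^4_x}\le\lambda_q^{-1}$, which sums to $2\lambda_0^{-1}$ and is made $\le\varepsilon/(4C)$ by enlarging $\lambda_0$. With this, the zeroth stage is simply $v_0=\chi v^\ell_{\mathrm{start}}+(1-\chi)v^\ell_{\mathrm{end}}$, with an $R_0$ that need not vanish anywhere.

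A secondary point concerns cross terms between distinct simultaneously active profiles, which you plan to control with the null form. For two concentrated blocks that estimate only gives something of order $\delta_{q+1}r^{2/s-2}$ with $s>1$, which is no better than $\delta_{q+1}$. The paper never meets such terms: the four cutoffs $\zeta_i^k$ have disjoint supports, so a single block is active at each time.

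Your passage to the limit in the weak formulation matches the paper. It uses convergence in $C_tH^{1/2}$ via $W^{1,\bar p}\hookrightarrow H^{1/2}$, together with $\|R_q\|_{L^1}\to0$.
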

	\vspace{7mm}
    The flexibility statement in
Theorem~\ref{thm:introduction-flexibility} has an immediate consequence
for the Hamiltonian
\begin{equation*}
    \mathcal H(\theta(t))
    :=
    \frac12\|\Lambda^{-1/2}\theta(t)\|_{L^2}^2.
\end{equation*}
Although smooth SQG solutions conserve the Hamiltonian and all Casimir
functionals
\(\int_{\mathbb T^2}F(\theta(x,t))\,dx\), Hamiltonian conservation can
fail at the regularity considered in our theorem. Indeed, choose
\(\theta_{\mathrm{start}}\) and \(\theta_{\mathrm{end}}\) such that
\[
    \mathcal H(\theta_{\mathrm{start}})
    \neq
    \mathcal H(\theta_{\mathrm{end}}).
\]
Since
\(L^{\bar p}(\mathbb T^2)\hookrightarrow H^{-1/2}(\mathbb T^2)\),
the functional \(\mathcal H\) is continuous in the \(L^{\bar p}\)
topology. Therefore, for sufficiently small \(\varepsilon\), the solution
provided by Theorem~\ref{thm:introduction-flexibility} satisfies
\(\mathcal H(\theta(0))\neq\mathcal H(\theta(1))\), and hence does not
conserve the Hamiltonian.

The approximation procedures in \cite{DRLP25,CIN26} provide a useful contrast: in their respective settings, both yield Hamiltonian-conserving weak solutions. In particular, the solutions obtained above by choosing endpoint states with different Hamiltonians cannot arise as vanishing-viscosity limits in~\cite{DRLP25}.

On the rigidity side, the condition
\(\theta\in L^3([0,T]\times\mathbb T^2)\) guarantees conservation of the
Hamiltonian~\cite{IV15}. Consequently, every weak solution in
\(L^\infty(0,T;L^p(\mathbb T^2))\) conserves the Hamiltonian for
\(p\geq 3\). Our theorem is the first result establishing failure of
conservation for some \(p>4/3\), more precisely in the range
\(4/3\leq p\leq \bar p\). Extending flexibility to the full range
\(4/3\leq p<3\) remains an interesting open problem.

	The same flexibility mechanism, together with the time-locality of the
	iteration, also yields nonuniqueness. Indeed, one may use the same initial
	target and two well-separated terminal targets, and arrange that the two
	resulting solutions agree on a short initial time interval while remaining
	distinct at the final time. Their common initial datum is only required to be
	$\varepsilon$-close to the prescribed initial target, as in
	\eqref{eq:introduction-endpoint-flexibility}. Since the target and
	$\varepsilon>0$ are arbitrary, the common initial data obtained in this way
	form a dense subset of the mean-zero subspace of $L^{\bar p}$. 

    \begin{theorem}[Nonuniqueness]\label{thm:SQG-nonuniqueness}
	Let $\bar p$ be given by
		\eqref{eq:explicit-pbar-introduction}.
	Then there exists a dense subset
    $\mathcal D\subset L^{\bar{p}}(\mathbb{T}^2)$ with zero mean,
	such that, for every
	\(\theta_{\mathrm{start}}\in\mathcal D\), \eqref{eq:SQG-introduction}
	admits two distinct weak solutions $\theta\in C([0,1];L^{\bar{p}})$
	with initial data $\theta(\cdot,0)=\theta_{\mathrm{start}}$.
\end{theorem}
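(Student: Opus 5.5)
We derive Theorem~\ref{thm:SQG-nonuniqueness} from the iterative scheme behind Theorem~\ref{thm:introduction-flexibility}. The one property we need beyond the statement of that theorem is \emph{time-locality}. Each convex-integration stage takes a smooth subsolution $(v_q,p_q,R_q)$ and produces $(v_{q+1},p_{q+1},R_{q+1})$. We require that the perturbation $w_{q+1}=v_{q+1}-v_q$ and the new stress $R_{q+1}$ be supported in time in a small neighbourhood of $\supp_t R_q$. Concretely, $\supp_t w_{q+1}\cup\supp_t R_{q+1}$ should lie in the $\tau_q$-neighbourhood of $\supp_t R_q$, with $\sum_q\tau_q$ as small as we like. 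This is plausible for the construction described in the abstract: the traveling profiles, their radii and the auxiliary sources are all built from $R_q$ through time cutoffs, and mollification in time only enlarges supports by the mollification scale.

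\textbf{Step 1: initial subsolution.} Fix a mean-zero target $\theta_{\mathrm{start}}$ and $\varepsilon>0$. By density, take a smooth mean-zero $\bar\theta_0$ with $\|\bar\theta_0-\theta_{\mathrm{start}}\|_{L^{\bar p}}\le\varepsilon/2$. Choose two smooth mean-zero terminal states $\theta^{(1)}_{\mathrm{end}},\theta^{(2)}_{\mathrm{end}}$ with $\|\theta^{(1)}_{\mathrm{end}}-\theta^{(2)}_{\mathrm{end}}\|_{L^{\bar p}}\ge 10$.

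Let $\chi(t)$ be a cutoff equal to $1$ on $[0,1/2]$ and $0$ near $t=1$. Set
\[
\bar\theta^{(i)}(t)=\chi(t)\,\bar\theta_0+(1-\chi(t))\,\theta^{(i)}_{\mathrm{end}}.
\]
We want $(\bar\theta^{(i)},R^{(i)}_0)$ to be a subsolution. Replacing $\bar\theta_0$ on $[0,1/2]$ by the smooth local solution starting from $\bar\theta_0$, or simply by the stationary radial-type shear data, makes the starting stress vanish identically on $[0,1/4]$. With this choice the two subsolutions \emph{coincide} on $[0,1/4]$, and both have $R^{(i)}_0=0$ there.

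In the simplest version we take $\bar\theta_0$ to be a stationary smooth SQG solution approximating $\theta_{\mathrm{start}}$ in $L^{\bar p}$, for instance a shear flow. We then need density of such stationary states. A cleaner route is to let the local smooth solution run on $[0,T_*)$ and start gluing only afterwards.

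\textbf{Step 2: iterate.} Run the scheme of Theorem~\ref{thm:introduction-flexibility} separately from each starting subsolution, with $\sum_q\tau_q<1/8$. By time-locality, neither iteration ever modifies the solution on $[0,1/8]$, and the two iterations also agree there. Hence the limits $\theta^{(1)}$ and $\theta^{(2)}$ both lie in $C([0,1];L^{\bar p})$, both are momentum weak solutions, and they share the initial datum $\theta^{(i)}(0)=\bar\theta_0^{\mathrm{sol}}(0)$. The flexibility estimates give $\|\theta^{(i)}(1)-\theta^{(i)}_{\mathrm{end}}\|_{L^{\bar p}}\le 1$, so $\theta^{(1)}(1)\neq\theta^{(2)}(1)$ and the solutions are distinct.

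\textbf{Step 3: density.} The common initial datum lies within $\varepsilon$ of $\theta_{\mathrm{start}}$. Collecting these data over all targets and all $\varepsilon$ gives the dense set $\mathcal D$.

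The main obstacle is verifying the time-locality claim inside the scheme. Any nonlocal-in-time operation would spread the support and destroy the agreement on the initial interval. Such operations include the inverse divergence, the time averages used to reconstruct $R_q$, and the profile trajectories. The first thing to check is that every time cutoff is chosen subordinate to $\supp_t R_q$, and that time mollification uses scales $\ell_q$ with $\sum_q\ell_q$ small.
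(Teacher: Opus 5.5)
Your outer architecture (two well-separated terminal targets, a common initial time segment, distinctness at $t=1$ by the triangle inequality, density from $\varepsilon$-closeness at $t=0$) matches the paper. However, the locality property you make load-bearing is false for this scheme, so Step~2 fails as written.

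You require $\supp_t w_{q+1}\cup\supp_t R_{q+1}$ to lie near $\supp_t R_q$, and you conclude that neither iteration ever modifies the solution on $[0,1/8]$. Here, though, Lemma~\ref{lem:error-decomposition} produces coefficients with $a_i\ge\delta_{q+1}$ at every time, including where $R_\ell=0$. This lower bound is structural: the radius $r_i^k=r_{q+1}(a_i^k)^{2/3}$, the trajectory speed $\eta_i^k\zeta_i^k/(r_i^k)^{3/2}$, and the bound $\|\nabla\log r_i^k\|_{L^\infty}\le\delta_{q+1}^{-1}\|a_i^k\|_{C^1}$ all rely on it. Hence principal blocks of amplitude $\eta_i^k\ge 2\delta_{q+1}^{1/2}$ are active in every coarse interval $\mathcal T^k$, including those inside $[0,1/8]$, and with them $Q_{q+1}$ and the stresses $R_i^k$, $G_i^k$. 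This holds whatever $R_0$ is.

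So the ``main obstacle'' you flag cannot be overcome without redesigning the scheme. Your work in Step~1 to make $R_0$ vanish initially is aimed at the wrong property; this includes the local smooth solution and the stationary shear states, which are not dense in $L^{\bar p}$ anyway.

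What the scheme does have, and what suffices, is domain-of-dependence locality (Remark~\ref{rem:SQG-time-locality}). If two stage-$q$ solutions agree on $[0,t_*]$, then the stage-$(q+1)$ solutions built with the same choices agree on $[0,t_*-\ell-\tau_{q+1}]$. The same choices means the same $\ell$, the same grid $\mathcal T^k$, the same decomposition, and the same starting points $x_i^k(t_0)$. The reasons are:
\begin{enumerate}
\item the mollification only sees an $\ell$-window in time;
\item the decomposition and $\mathcal R_0$ act pointwise in time (the inverse divergence is purely spatial);
\item $a_i^k$ and the trajectories on $\mathcal T^k$ use only data from $\mathcal T^k$;
\item $Q_{q+1}$ is integrated from $k\tau_{q+1}$.
\end{enumerate}

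With this property, no vanishing of $R_0$ is needed. Your plain interpolation $\chi\bar\theta_0+(1-\chi)\theta^{(i)}_{\mathrm{end}}$ already gives two starting subsolutions that coincide, together with their stresses, on an initial interval $[0,t_*]$. Since $\ell+\tau_{q+1}\le 2\lambda_q^{-1}$, the two limits agree on $[0,t_*-4\lambda_0^{-1}]$.

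Finally, the common initial datum is then the limit $\theta(\cdot,0)$, not $\bar\theta_0$. Its $\varepsilon$-closeness to $\theta_{\mathrm{start}}$ must therefore come from the endpoint estimate at $t=0$ in Theorem~\ref{thm:introduction-flexibility}, not from the construction.
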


To the best of the authors' knowledge, this is the first nonuniqueness
result for the SQG equation in class
$\theta\in L^\infty(0,T;L^p)$, for some $p\geq 4/3$, above the
critical threshold. This should be compared with the available uniqueness
theory, which requires substantially stronger regularity. Classical
solutions are locally well posed for initial data in
$H^s$, $s>2$~\cite{CMT94}, whereas Jeong and
Kim~\cite{JK21} proved strong ill-posedness at the borderline Sobolev
space $H^2$. At the level of distributional solutions,
Azzam and Bedrossian~\cite{AB15} proved uniqueness under the additional
assumption that the scalar gradients belong to
$L^2(0,T;BMO\cap L^{p_0})$, for some $1\leq p_0<\infty$. Such an
assumption is not implied by a uniform $L^p$ bound on the scalar.
	
\subsection{Convex integration and previous results}	

The convex integration method has its origins in differential geometry, in the work of Nash~\cite{Nash54} and Gromov~\cite{Gromov86}. It was brought into incompressible fluid dynamics by De Lellis and Sz\'ekelyhidi~\cite{DLS09,DLS13}, in connection with Onsager's conjecture. Following the developments in~\cite{BDIS15,DS17}, Isett~\cite{Isett18} used convex integration to prove the flexible direction of Onsager's conjecture. Admissible solutions below the Onsager threshold were later constructed by Buckmaster, De Lellis, Sz\'ekelyhidi, and Vicol~\cite{BDSV19}.
	
A subsequent breakthrough was achieved by Buckmaster and Vicol~\cite{BV19}, who introduced intermittency into the convex integration scheme and constructed nonunique finite-energy weak solutions to the three-dimensional Navier--Stokes equations. Cheskidov and Luo~\cite{CL22} later established sharp nonuniqueness below the endpoint Ladyzhenskaya--Prodi--Serrin class. Using a different approach, based on self-similar instability, Albritton, Bru\`e, and Colombo~\cite{ABC22,ABC23} proved nonuniqueness of Leray solutions for the forced equation.

For the three-dimensional Euler equations, Buckmaster, Masmoudi,
Novack, and Vicol~\cite{BMNV23} developed a spatially intermittent
scheme based on pipe flows and established flexibility in
\(C_t^0H_x^\beta\) for every \(\beta<1/2\). Building on this scheme,
Novack and Vicol~\cite{NV23} proved an intermittent version of the
\(L^3\)-based Onsager theorem, while Giri, Kwon, and
Novack~\cite{GKN24} subsequently introduced a new wavelet-inspired
\(L^3\)-based framework built around intermittent Mikado bundles.
For the two-dimensional Euler equations,  
Bru\`e and Colombo~\cite{BC23} used bundles of intermittent
jets to construct nonunique solutions with vorticity in
\(L^{1,\infty}\).  
Buck and Modena~\cite{BM24b,BM24a} proved nonuniqueness with vorticity in \(H^p\), for the full range \(0<p<1\).

A parallel line of work concerns transport and continuity
equations driven by rough velocity fields. Beyond the classical
renormalization theories of DiPerna--Lions~\cite{DPL89} and
Ambrosio~\cite{Ambrosio04}, Modena and Sz\'ekelyhidi
\cite{MS18,MS19}, Modena and Sattig~\cite{MS20} developed
convex-integration constructions producing nonunique and
non-renormalized solutions. Bru\`e,
Colombo, and De Lellis~\cite{BCD21} subsequently constructed
nonunique nonnegative solutions, related Eulerian nonuniqueness to
classical nonuniqueness of characteristic curves, and obtained an
improved uniqueness range for nonnegative densities. The sharpness of
this range was established by Bru\`e, Colombo, and
Kumar~\cite{BCKTransport24} through a new locally self-similar and
intermittently localized flow mechanism. In a complementary direction, Cheskidov and Luo~\cite{CL21}
introduced temporal intermittency into the convex-integration scheme
and obtained nonuniqueness at the critical spatial regularity, under
weaker time-integrability assumptions.

The use of convex integration for active scalar equations goes back
to Shvydkoy~\cite{Shv11}, who constructed nonunique bounded weak
solutions for a broad class of active scalar models. His framework,
however, does not apply to the classical SQG equation, whose drift
operator has an odd Fourier symbol. Isett and Vicol~\cite{IV15}
subsequently developed a H\"older convex-integration theory for active
scalar equations and made this odd-multiplier obstruction precise;
in the two-dimensional odd-multiplier setting, they also established
a Hamiltonian-conservation criterion that applies, in particular, to
SQG. Buckmaster, Shkoller, and Vicol~\cite{BSV19} later overcame this
obstruction through the potential-velocity formulation and constructed
the first nonunique weak solutions of the unforced SQG equation with
a prescribed nonconstant Hamiltonian. Their method also applies to a
range of fractionally dissipative SQG equations. Isett and
Ma~\cite{IM21} subsequently developed an alternative convex-integration scheme acting directly at the scalar level and established a corresponding failure of compactness.

Following these foundational works, the theory developed along two
complementary directions: further flexibility constructions in
stationary, forced, and generalized settings, and sharper rigidity
results for Hamiltonian conservation. On the flexibility side,
Cheng, Kwon, and Li~\cite{CKL21} constructed nontrivial stationary
weak solutions of the unforced SQG equation. Further nonuniqueness
results for the forced SQG and the forced generalized SQG family were
obtained in
\cite{DaiPeng23,BulutHuynhPalasek,
CastroFaracoMengualSolera,MS26}. On the rigidity side, Isett and Ma~\cite{IM24} established
Hamiltonian conservation under conjecturally optimal critical Besov
regularity assumptions and introduced a new formulation revealing
additional conservation laws and structural cancellations for SQG
and modified SQG. The optimality of the resulting threshold for
standard SQG was subsequently confirmed by the flexible
constructions of Isett and Looi~\cite{IL24} and Dai, Giri, and
Radu~\cite{DGR24}.

\medskip

More recently, Gismondi and Radu~\cite{GR25} constructed nontrivial stationary weak paraproduct solutions to the dissipative SQG equation with dissipation 
\(\Lambda^\gamma\) on \(\mathbb T^2\), 
for every
\(0<\gamma\leq2\). Their construction is based on genuinely
two-dimensional intermittent blobs and, in the highly intermittent regime, yields
$\theta\in L^p(\mathbb T^2)$ for every $1\leq p<\frac43$.
Because the constructed solutions lie below the natural
\(\dot H^{-1/2}\) regularity, 
the equation is interpreted in the paraproduct sense. 
In terms of scalar integrability and solution
concept, 
Theorem~\ref{thm:introduction-flexibility} goes beyond
\cite{GR25} by crossing the \(L^{4/3}\) threshold and producing time-dependent inviscid solutions in the standard momentum formulation, 
together with flexibility between prescribed endpoint
states. 
The two results otherwise concern different regimes:
stationary dissipative equations in~\cite{GR25}, and time-dependent inviscid dynamics here.

	\subsection{Overview of the convex-integration scheme}
    
	Our argument is inspired by the moving-dipole convex integration
	scheme introduced by Bru\`e, Colombo, and Kumar
	\cite{BCK26} for the two-dimensional Euler equations. Their scheme
	uses moving Lamb--Chaplygin dipoles and cancels the error through time
	averages of nonperiodic, spatially anisotropic perturbations.
	Adapting this mechanism to SQG requires new ingredients
 because the transport velocity is one derivative more singular than  the potential velocity:
	\[
	u=\Lambda  v.
	\]
	
	\subsubsection*{Traveling SQG building blocks}
	
	The principal building block is the profile corresponding to the
	standard SQG equation within the family of traveling counter-rotating
	circular pairs constructed in \cite{CQZZ23}. On $\mathbb R^2$, its compactly supported Lipschitz
	active scalar satisfies
	\begin{equation*}
		W\partial_1\theta
		+
		\operatorname{div}(u\theta)
		=0,
		\qquad
		u=\nabla^\perp\Lambda^{-1}\theta,
	\end{equation*}
	and has zero total mass. Here $W>0$ is the traveling speed. After
	rescaling the amplitude of $(\theta,u)$, we normalize $W=1$.
	
	For a direction $\xi\in\mathbb S^1$ and a small physical scale $r>0$, we
	rotate and rescale the profile by
	\[
	\theta_{r,\xi}(x)
	=
	r^{-3/2}
		\theta\left(\frac{O_\xi^\top x}{r}\right),
	\]
	where $O_\xi$ is the rotation satisfying $O_\xi e_1=-\xi$.
	The corresponding potential velocity scales as
	\[
	v_{r,\xi}(x)
	=
	r^{-1/2}
		O_\xi v\left(\frac{O_\xi^\top x}{r}\right).
	\]
	In particular,
	\[
	\|v_{r,\xi}\|_{L^4(\mathbb R^2)}
	=\|v\|_{L^4(\mathbb R^2)},
	\qquad
	\|\theta_{r,\xi}\|_{L^{4/3}(\mathbb R^2)}
	=\|\theta\|_{L^{4/3}(\mathbb R^2)}.
	\]
	
	Because the active scalar has zero mass, the potential velocity is a
	gradient outside the support of $\theta_{r,\xi}$. Subtracting this far
	field gives a compactly supported field $V_{r,\xi}^{\mathrm{loc}}$, which can
	be periodized on $\mathbb T^2$, although it is not divergence-free. Adding
	the periodic gradient corrector produces a divergence-free field
	$V_{r,\xi}$ satisfying
	\[
	\operatorname{curl}V_{r,\xi}=-\theta_{r,\xi},
	\qquad
	U_{r,\xi}
	=
	\Lambda \mathbb P_{\neq0}V_{r,\xi}.
	\]
	The physical and localization scales, and the distinction between localized and nonlocal fields,
	are illustrated in Figure~\ref{fig:sqg-traveling-block-geometry}.
	\begin{figure}[t]
		\centering
		\begin{tikzpicture}[
			x=1cm,y=1cm,>=Latex,
			motion/.style={->,cyan!60!black,very thick},
			farfield/.style={->,gray!72,semithick},
			paneltitle/.style={font=\footnotesize\bfseries,align=center}
		]
			\begin{scope}[shift={(0,0)}]
				\fill[gray!3] (0,0) rectangle (5,4);
				\draw[gray!65] (0,0) rectangle (5,4);
				\draw[dashed,gray!75] (2.05,2) circle (1.34);
				\shade[inner color=red!76,outer color=red!12]
					(2.05,2.48) ellipse (.72 and .42);
				\shade[inner color=blue!76,outer color=blue!10]
					(2.05,1.52) ellipse (.72 and .42);
				\node[font=\bfseries,text=red!75!black] at (2.05,2.48) {$+$};
				\node[font=\bfseries,text=blue!85!black] at (2.05,1.52) {$-$};
				\draw[->,red!70!black,semithick]
					(1.48,2.50) arc[start angle=170,end angle=20,radius=.56];
				\draw[->,blue!80!black,semithick]
					(2.62,1.50) arc[start angle=-10,end angle=-160,radius=.56];
				\draw[motion] (3.35,2)--(4.55,2)
					node[midway,above,font=\scriptsize] {$\xi$};
				\node[font=\scriptsize,anchor=west] at (3.26,1.56)
					{translation};
				\node[font=\scriptsize,anchor=south west] at (2.89,2.86)
					{$B_r$};
				\node[font=\scriptsize,align=center] at (2.05,.35)
					{$\operatorname{supp}\theta_{r,\xi}\subset B_r$};
				\node[font=\bfseries] at (.27,3.73) {(a)};
				\node[paneltitle] at (2.5,4.40)
					{Traveling counter-rotating\\scalar pair};
			\end{scope}

			\begin{scope}[shift={(6.15,0)}]
				\fill[gray!3] (0,0) rectangle (5,4);
				\draw[gray!65] (0,0) rectangle (5,4);
				\draw[farfield] (.28,3.18) .. controls (.68,3.62) and (1.15,3.65)
					.. (1.50,3.30);
				\draw[farfield] (4.68,3.05) .. controls (4.26,3.56) and (3.78,3.60)
					.. (3.48,3.27);
				\draw[farfield] (.30,.78) .. controls (.72,.32) and (1.20,.32)
					.. (1.52,.67);
				\draw[farfield] (4.68,.86) .. controls (4.25,.36) and (3.78,.34)
					.. (3.48,.70);
				\fill[cyan!9] (2.5,2) circle (1.55);
				\fill[orange!24,even odd rule]
					(2.5,2) circle (1.55) (2.5,2) circle (.91);
				\fill[white] (2.5,2) circle (.91);
				\fill[cyan!5] (2.5,2) circle (.43);
				\draw[dashed,cyan!55!black,semithick] (2.5,2) circle (1.55);
				\draw[dashed,orange!85!black,semithick] (2.5,2) circle (.91);
				\draw[black!75,semithick] (2.5,2) circle (.43);
				\shade[inner color=red!75,outer color=red!12]
					(2.5,2.14) ellipse (.25 and .12);
				\shade[inner color=blue!75,outer color=blue!10]
					(2.5,1.86) ellipse (.25 and .12);
				\draw[->,thin] (3.12,2.69)--(3.72,3.12);
				\node[font=\scriptsize,anchor=west,align=left] at (3.73,3.13)
					{cutoff annulus\\$r^\alpha<|x|<2r^\alpha$};
				\draw[->,thin] (2.78,2.33)--(3.63,2.48);
				\node[font=\scriptsize,anchor=west] at (3.65,2.48) {$B_r$};
				\draw[->,thin] (3.15,1.36)--(3.72,1.15);
				\node[font=\scriptsize,anchor=west] at (3.73,1.15)
					{$B_{r^\alpha}$};
				\node[font=\scriptsize,anchor=south west] at (.67,3.36)
					{$B_{2r^\alpha}$};
				\node[font=\tiny,align=center,text=gray!75!black]
					at (2.5,.22)
					{nonlocal fields:\ $V_{r,\xi}^{c}=\nabla\phi_{r,\xi}$
					and $U_{r,\xi}$};
				\node[font=\scriptsize,align=center,fill=white,
					fill opacity=.88,text opacity=1,rounded corners=1pt]
					at (2.5,.72)
					{$\operatorname{supp}V_{r,\xi}^{\mathrm{loc}}
					\subset B_{2r^\alpha}$};
				\node[font=\bfseries] at (.27,3.73) {(b)};
				\node[paneltitle] at (2.5,4.40)
					{Localized block and\\nonlocal correction};
			\end{scope}
		\end{tikzpicture}
		\caption{Schematic geometry of an SQG building block (not to scale).
		Panel~\textup{(a)} shows the positive and negative lobes of the active
		scalar in its physical core $B_r$, translating in direction $\xi$.
		Panel~\textup{(b)} separates the physical core $B_r$ from the cutoff
		scales $B_{r^\alpha}\subset B_{2r^\alpha}$ ($0<\alpha<1$).
		The cutoff varies in $B_{2r^\alpha}\setminus B_{r^\alpha}$, and
		subtracting the exterior gradient field localizes
		$V_{r,\xi}^{\mathrm{loc}}$ to $B_{2r^\alpha}$; the periodic
		gradient correction $V_{r,\xi}^{c}$ and the SQG velocity $U_{r,\xi}$
		are nonlocal and are represented by the exterior arrows.}
		\label{fig:sqg-traveling-block-geometry}
	\end{figure}
	Let
	$c_1:=\int_{\mathbb R^2}x_2\theta(x)\,dx>0$ be the first-moment constant
	of the fixed profile. A crucial feature is the nonzero mean
	\begin{equation}\label{eq:source-mean-introduction}
		\int_{\mathbb T^2}V_{r,\xi}(x)\,dx
		=
		c_1r^{3/2}\xi.
	\end{equation}
	The block is an approximate solution of the SQG momentum equation:
	\begin{align}\label{eq:constant-speed-block-introduction}
		-r^{-3/2}\xi\cdot\nabla V_{r,\xi}
		+
		U_{r,\xi}\cdot\nabla V_{r,\xi}
		-
		(\nabla V_{r,\xi})^TU_{r,\xi}
		+
		\nabla P_1
		=
		\operatorname{div}R_1.
	\end{align}
	It also satisfies a useful derivative identity with respect to its
	physical scale:
	\begin{equation}\label{eq:r-derivative-block-introduction}
		\partial_rV_{r,\xi}
		=
		\frac{3}{2r}V_{r,\xi}
		-
		\operatorname{div}R_2
		-
		\nabla P_2.
	\end{equation}
	
	\subsubsection*{Variable speed and space-dependent radius}
	
	At the $q$-th stage, the mollified Reynolds stress is decomposed into
	four rank-one directions:
	\begin{equation*}
		-\operatorname{div}R_\ell
		=
		\operatorname{div}
		\left(
		\sum_{i=1}^4a_i(x,t)\xi_i\otimes\xi_i
		\right)
		+\nabla P^d,
	\end{equation*}
	with the sign convention used in the iteration. The geometric
	decomposition is chosen so that $a_i(x,t)\geq\delta_{q+1}>0$, where $\delta_{q+1}$ is roughly the size of $R_{\ell}$ in $L^1(\mathbb{T}^2)$.
     Our principal perturbation \(v_{q+1}\) consists of only one building block moving in
     one of the \(\xi_i\) directions at any given time. More precisely, it will be
     \(\tau_{q+1}\)-periodic in time, and each interval of the form
     $[k\tau_{q+1},(k+1)\tau_{q+1}], \, k\in\mathbb{Z}$,
     will be equally divided into four subintervals and each associated with
        a different direction \(\xi_j\). See Figure~\ref{fig:sqg-four-direction-time-partition}.

	\begin{figure}[t]
		\centering
		\begin{tikzpicture}[
			x=1cm,y=1cm,>=Latex,
			motion/.style={->,cyan!60!black,very thick},
			orbit/.style={gray!65,dashed,thin},
			paneltitle/.style={font=\scriptsize\bfseries,align=center}
		]
			\begin{scope}[shift={(0.35,1.65)}]
				\fill[blue!3] (0,0) rectangle (2.35,1.45);
				\draw[gray!70] (0,0) rectangle (2.35,1.45);
				\draw[orbit] (-.15,.22)--(2.5,1.18);
				\draw[orbit] (-.15,.70)--(1.95,1.46);
				\filldraw[fill=blue!65,draw=blue!90!black]
					(.72,.54) circle (.18);
				\draw[motion] (.72,.54)--++(.65,.24)
					node[midway,above,font=\scriptsize] {$\xi_1$};
				\node[paneltitle] at (1.175,1.72) {$V_1^k$ active};
			\end{scope}
			\begin{scope}[shift={(3.65,1.65)}]
				\fill[cyan!4] (0,0) rectangle (2.35,1.45);
				\draw[gray!70] (0,0) rectangle (2.35,1.45);
				\draw[orbit] (.18,-.15)--(1.38,1.60);
				\draw[orbit] (.88,-.15)--(1.88,1.31);
				\filldraw[fill=cyan!60!blue,draw=cyan!35!black]
					(.78,.42) circle (.18);
				\draw[motion] (.78,.42)--++(.42,.62)
					node[midway,right,font=\scriptsize] {$\xi_2$};
				\node[paneltitle] at (1.175,1.72) {$V_2^k$ active};
			\end{scope}
			\begin{scope}[shift={(6.95,1.65)}]
				\fill[blue!3] (0,0) rectangle (2.35,1.45);
				\draw[gray!70] (0,0) rectangle (2.35,1.45);
				\draw[orbit] (-.12,1.26)--(2.48,.18);
				\draw[orbit] (-.12,.76)--(1.72,-.06);
				\filldraw[fill=blue!65,draw=blue!90!black]
					(.76,1.02) circle (.18);
				\draw[motion] (.76,1.02)--++(.62,-.28)
					node[midway,above,font=\scriptsize] {$\xi_3$};
				\node[paneltitle] at (1.175,1.72) {$V_3^k$ active};
			\end{scope}
			\begin{scope}[shift={(10.25,1.65)}]
				\fill[cyan!4] (0,0) rectangle (2.35,1.45);
				\draw[gray!70] (0,0) rectangle (2.35,1.45);
				\draw[orbit] (.18,1.60)--(1.48,-.15);
				\draw[orbit] (.88,1.60)--(2.18,-.15);
				\filldraw[fill=cyan!60!blue,draw=cyan!35!black]
					(.80,1.04) circle (.18);
				\draw[motion] (.80,1.04)--++(.46,-.62)
					node[midway,right,font=\scriptsize] {$\xi_4$};
				\node[paneltitle] at (1.175,1.72) {$V_4^k$ active};
			\end{scope}

			\foreach \a/\b/\c in {
				0/3.3/blue!12,
				3.3/6.6/cyan!12,
				6.6/9.9/blue!12,
				9.9/13.2/cyan!12}
				\fill[\c] (\a,.35) rectangle (\b,.85);
			\foreach \a/\b/\c in {
				0.25/3.05/blue!48,
				3.55/6.35/cyan!45,
				6.85/9.65/blue!48,
				10.15/12.95/cyan!45}
				\fill[\c] (\a,.35) rectangle (\b,.85);
			\draw[thick] (0,.35) rectangle (13.2,.85);
			\foreach \x in {3.3,6.6,9.9}
				\draw[thick] (\x,.35)--(\x,.85);
			\foreach \x/\i in {1.65/1,4.95/2,8.25/3,11.55/4}{
				\node[font=\scriptsize] at (\x,.60) {$\zeta_{\i}^k=1$};
				\draw[decorate,decoration={brace,mirror,amplitude=3.5pt}]
					({\x-1.65},.20)--({\x+1.65},.20)
					node[midway,below=4pt,font=\scriptsize]
					{$\mathcal T_{\i}^k$};
			}
			\node[font=\scriptsize,anchor=east] at (0,.99)
				{$k\tau_{q+1}$};
			\node[font=\scriptsize,anchor=west] at (13.2,.99)
				{$(k+1)\tau_{q+1}$};
			\draw[decorate,decoration={brace,amplitude=4pt}]
				(0,1.12)--(13.2,1.12)
				node[midway,above=4pt,font=\scriptsize]
				{$\mathcal T^k$};
		\end{tikzpicture}
		\caption{Time partition for the four rational directions. The coarse
		interval $\mathcal T^k$ is divided into four active subintervals
		$\mathcal T_i^k$.  The cutoff $\zeta_i^k$ equals one on the shortened
		central interval $\bar{\mathcal T}_i^k$ and switches smoothly near its
		ends. Since the supports of the four cutoffs are disjoint, at most one
		principal SQG block is active at any time, eliminating interactions
		between distinct principal blocks.}
		\label{fig:sqg-four-direction-time-partition}
	\end{figure}
	
	We freeze the coefficients $a_i$ on short time intervals and denote
	their time averages by $a_i^k(x)$. The physical radius of the
	corresponding SQG block is chosen according to
	\begin{equation*}
		\bigl(r_i^k(x)\bigr)^{3/2}
		=
		r_{q+1}^{3/2}a_i^k(x).
	\end{equation*}
	The exponent $3/2$ is forced by the scaling
	\eqref{eq:source-mean-introduction} and by the source term produced
	when the radius varies.
	
	The center $x_i^k(t)$ of the block travels along a rational direction
	$\xi_i$ according to
	\begin{equation}\label{eq:trajectory-introduction}
		\frac{d}{dt}x_i^k(t)
		=
		\frac{\eta_i^k\zeta_i^k(t)}
		{\bigl(r_i^k(x_i^k(t))\bigr)^{3/2}}\xi_i.
	\end{equation}
	Here $\eta_i^k>0$ is a constant amplitude fixed by the spatial average
	of $a_i^k$, and $\zeta_i^k$ is a smooth temporal cutoff supported in the
	$i$th active subinterval.
	The rational directions are chosen with long period
	$L_i\sim\lambda_{q+1}$. The temporal cutoffs have disjoint supports, so at
	most one direction is active at each time. Consequently, distinct principal
	blocks do not interact; see
	Figure~\ref{fig:sqg-four-direction-time-partition}.
	
	Combining \eqref{eq:constant-speed-block-introduction},
	\eqref{eq:r-derivative-block-introduction}, and
	\eqref{eq:trajectory-introduction}, one obtains
	\begin{align}\label{eq:variable-block-introduction}
		\partial_tV_i^k
		+
		U_i^k\cdot\nabla V_i^k
		-
		(\nabla V_i^k)^TU_i^k
		+
		\nabla P_i^k
		&=
		S_i^k
		\frac{d}{dt}
		\left[
		\eta_i^k\zeta_i^k(t)
		\bigl(r_i^k(x_i^k(t))\bigr)^{3/2}
		\right]
		+
		\operatorname{div}R_i^k,
	\end{align}
	where, after the fixed source normalization used in Section~2,
	\[
	\int_{\mathbb T^2}S_i^k(x,t)\,dx=\xi_i.
	\]
	
	\subsubsection*{Reconstruction of the stress by time averages}
	
	The source in \eqref{eq:variable-block-introduction} is not small
	pointwise in time. Instead, it is designed so that its time average
	reconstructs the old Reynolds stress.
	
	During each active time interval, the trajectory completes many periods.
	Since its speed is inversely proportional to $(r_i^k)^{3/2}$, and hence to
	$a_i^k$, the time spent in a given region is weighted by $a_i^k$.
	Integration by parts in time gives
	\begin{equation*}
		P_{\tau_{q+1}}\mathcal U_{q+1}(x,t)
		=
		\operatorname{div}
		\left(
		\sum_{i=1}^4
		a_i^k(x)\xi_i\otimes\xi_i
		+
		\sum_{i=1}^4G_i^k(x)
		\right),\qquad t\in\mathcal T^k,
	\end{equation*}
	Here $P_{\tau_{q+1}}$ denotes the intervalwise time average,
	$\mathcal U_{q+1}$ is the global auxiliary vector source, and each $G_i^k$
	is a symmetric tensor satisfying
	\[
	\|G_i^k\|_{L^1}
	\lesssim
	\lambda_{q+1}^{-1}
	\|a_i^k\|_{C^1}.
	\]
	Thus the spatial error is reconstructed through temporal averaging
	rather than through the low-frequency interaction of
	periodic plane waves.
	
	An auxiliary localized profile and a time corrector replace the exact source
	by one whose orbit average can be computed explicitly. The corrector is
	chosen to vanish at the endpoints of each coarse time interval.

	Figure~\ref{fig:sqg-source-averaging-schematic} summarizes the geometry of
	the principal localized source and of its auxiliary comparison profile
	during one active subinterval.
	\begin{figure}[t]
		\centering
		\begin{tikzpicture}[
			x=1cm,
			y=1cm,
			>=Latex,
			orbit/.style={gray!70,thin},
			gridline/.style={gray!55,dashed,very thin},
			motion/.style={->,cyan!65!black,semithick},
			paneltitle/.style={font=\footnotesize\bfseries,align=center},
			panelnote/.style={font=\scriptsize,align=center,fill=white,
				fill opacity=.84,text opacity=1,rounded corners=1pt,
				inner sep=2pt}
		]
			\begin{scope}[shift={(0,5.05)}]
				\clip (0,0) rectangle (4.4,3.5);
				\fill[red!5] (0,0) rectangle (4.4,3.5);
				\shade[inner color=red!60,outer color=red!4]
					(0.85,2.55) circle (1.45);
				\shade[inner color=red!45,outer color=red!3]
					(3.55,0.65) circle (1.25);
				\shade[inner color=red!35,outer color=red!2]
					(3.15,2.75) circle (0.95);
				\foreach \x in {1.1,2.2,3.3}
					\draw[gridline] (\x,0)--(\x,3.5);
				\foreach \y in {0.875,1.75,2.625}
					\draw[gridline] (0,\y)--(4.4,\y);
				\foreach \b in {-0.45,0.55,1.55,2.55}
					\draw[orbit] (-0.5,\b)--(4.9,{\b+1.65});
				\filldraw[fill=blue!70,draw=blue!90!black]
					(0.55,0.72) circle (0.43);
				\filldraw[fill=blue!70,draw=blue!90!black]
					(1.55,1.02) circle (0.22);
				\filldraw[fill=blue!70,draw=blue!90!black]
					(2.75,1.39) circle (0.53);
				\filldraw[fill=blue!70,draw=blue!90!black]
					(3.78,1.70) circle (0.30);
				\foreach \x/\y in {0.55/0.72,1.55/1.02,2.75/1.39,3.78/1.70}
					\draw[motion] (\x,\y)--++(0.58,0.18);
				\node[font=\scriptsize,anchor=west,fill=white,
					fill opacity=.8,text opacity=1,rounded corners=1pt]
					at (0.16,3.15) {$a_i^k(x)$ intensity};
			\draw[<->,thin] (2.14,0.86)--(2.14,1.92);
			\node[font=\scriptsize,anchor=east,align=right] at (2.08,1.39)
				{$3(r_i^k(t))^\alpha$};
			\node[panelnote,anchor=east] at (4.22,0.24)
				{radius adapts to $a_i^k$};
			\end{scope}
			\draw[black] (0,5.05) rectangle (4.4,8.55);
			\node[font=\bfseries] at (-0.28,8.35) {(a)};
			\node[paneltitle] at (2.2,8.88)
				{Variable localized source $S_i^k$};

			\begin{scope}[shift={(5.65,5.05)}]
				\clip (0,0) rectangle (4.4,3.5);
				\fill[blue!2] (0,0) rectangle (4.4,3.5);
				\foreach \x in {1.1,2.2,3.3}
					\draw[gridline] (\x,0)--(\x,3.5);
				\foreach \y in {0.875,1.75,2.625}
					\draw[gridline] (0,\y)--(4.4,\y);
				\foreach \b in {-0.45,0.60,1.65,2.70}{
					\draw[blue!58,opacity=.72,line width=5pt,line cap=round]
						(-0.35,\b)--(1.15,{\b+0.46});
					\draw[blue!58,opacity=.72,line width=13pt,line cap=round]
						(1.08,{\b+0.44})--(2.75,{\b+0.95});
				\draw[blue!58,opacity=.72,line width=7pt,line cap=round]
					(2.68,{\b+0.93})--(4.75,{\b+1.56});
			}
			\node[panelnote] at (2.2,0.24)
				{nonuniform spatial coverage};
			\end{scope}
			\draw[black] (5.65,5.05) rectangle (10.05,8.55);
			\node[font=\bfseries] at (5.37,8.35) {(b)};
			\node[paneltitle] at (7.85,8.88)
				{Nonuniform swept region};

			\begin{scope}[shift={(0,0)}]
				\clip (0,0) rectangle (4.4,3.5);
				\fill[red!5] (0,0) rectangle (4.4,3.5);
				\shade[inner color=red!60,outer color=red!4]
					(0.85,2.55) circle (1.45);
				\shade[inner color=red!45,outer color=red!3]
					(3.55,0.65) circle (1.25);
				\foreach \x in {1.1,2.2,3.3}
					\draw[gridline] (\x,0)--(\x,3.5);
				\foreach \y in {0.875,1.75,2.625}
					\draw[gridline] (0,\y)--(4.4,\y);
				\foreach \b in {-0.45,0.55,1.55,2.55}
					\draw[orbit] (-0.5,\b)--(4.9,{\b+1.65});
				\foreach \x/\y in {0.55/0.72,1.55/1.02,2.75/1.39,3.78/1.70}{
					\filldraw[fill=cyan!68!blue,draw=cyan!35!black]
						(\x,\y) circle (0.43);
					\draw[motion] (\x,\y)--++(0.58,0.18);
				}
			\draw[<->,thin] (2.16,0.96)--(2.16,1.82);
			\node[font=\scriptsize,anchor=east,align=right] at (2.10,1.39)
				{$O(\lambda_{q+1}^{-1})$};
			\node[panelnote,anchor=east] at (4.22,0.24)
				{radius fixed in time};
			\end{scope}
			\draw[black] (0,0) rectangle (4.4,3.5);
			\node[font=\bfseries] at (-0.28,3.30) {(c)};
			\node[paneltitle] at (2.2,3.83)
				{Fixed auxiliary profile $\widetilde\Omega_i^k$};

			\begin{scope}[shift={(5.65,0)}]
				\clip (0,0) rectangle (4.4,3.5);
				\fill[blue!32] (0,0) rectangle (4.4,3.5);
				\foreach \b in {-0.70,0.10,0.90,1.70,2.50,3.30}
					\draw[blue!50,opacity=.55,line width=12pt]
						(-0.5,\b)--(4.9,{\b+1.65});
				\foreach \x in {1.1,2.2,3.3}
					\draw[gridline] (\x,0)--(\x,3.5);
				\foreach \y in {0.875,1.75,2.625}
					\draw[gridline] (0,\y)--(4.4,\y);
				\node[font=\scriptsize,align=center,fill=white,
					fill opacity=.88,text opacity=1,rounded corners=2pt,
					inner sep=4pt] at (2.2,1.75)
				{$\displaystyle\frac{1}{c_i\lambda_{q+1}}
				\int_0^{c_i\lambda_{q+1}}$
				\\[-1pt]
				$\widetilde\Omega_i^k(x-(x_0+s\xi_i))\,ds=1$};
			\node[panelnote] at (2.2,0.24)
				{uniform coverage of $\mathbb T^2$};
			\end{scope}
			\draw[black] (5.65,0) rectangle (10.05,3.5);
			\node[font=\bfseries] at (5.37,3.30) {(d)};
			\node[paneltitle] at (7.85,3.83)
				{Uniform auxiliary orbit average};
		\end{tikzpicture}
		\caption{Schematic comparison of the principal and auxiliary sources
		during one active subinterval. Panel~\textup{(a)} shows the localized
		source $S_i^k(\cdot,t)$ following the rational trajectory $x_i^k(t)$;
		its support radius varies with $a_i^k$, represented by the red
		background. Panel~\textup{(b)} shows its resulting nonuniform swept
		region. Panel~\textup{(c)} shows the fixed-width comparison profile
		$\widetilde\Omega_i^k(\cdot-x_i^k(t))$, and panel~\textup{(d)} depicts
		the uniform orbit-average property
		\eqref{eq:Omega-orbit-average}. The disks represent localized source
		profiles only; the full fields $V_i^k$ and $U_i^k$ are nonlocal.}
		\label{fig:sqg-source-averaging-schematic}
	\end{figure}
	
	\subsubsection*{The main new difficulty}
	
	Compared with the Euler construction of
	\cite{BCK26}, the principal additional difficulty is the
	nonlocal relation
	\[
	U_i^k=\Lambda \mathbb{P}_{\neq0}V_i^k.
	\]
	In particular, the linear interaction between the new block and the
	mollified background contains one derivative of the concentrated
	profile. A direct estimate leads to the unfavorable quantity
	\[
	\|u_\ell\|_{L^4}
	\|DV_i^k\|_{L^{\bar p}},
	\]
	which incurs a negative power of the concentration radius.
	
	To avoid this loss, the scheme exploits the two-dimensional identity
	\[
	N(U,v)
	:=
	U\cdot\nabla v-(\nabla v)^TU
	=
	(\operatorname{curl}v)U^\perp
	\]
	together with the bilinear null-form estimate for smooth mean-zero
	scalars $f$ and $g$
	\[
	\left\|
	\mathcal R_0\!\left(g\mathcal R^\perp f+
	f\mathcal R^\perp g\right)
	\right\|_{L^\rho}
	\lesssim
	\|f\|_{L^4}\|\Lambda^{-1}g\|_{L^{p_0}},
	\qquad
	\frac1\rho=\frac14+\frac1{p_0}<1,
	\qquad p_0>\frac43.
	\]
	Here $\mathcal R_0$ is the symmetric anti-divergence and
	$\mathcal R^\perp=\nabla^\perp\Lambda^{-1}$. The exponent $p_0$ here is
	an auxiliary exponent, not the target exponent $\bar p$; for example,
	the linear-stress estimate below uses $p_0=2$. The null form replaces an
	unfavorable derivative norm of the concentrated perturbation by a norm
	of its potential velocity. It is essential for closing the linear error,
	but it does not by itself explain why the target $\bar p$ is close to
	$4/3$.

	\paragraph{Why the gain above $4/3$ is small.}
	For $p>4/3$, introduce the concentration exponent
	\begin{equation*}
		s_p
		:=
		\frac32-\frac2p
		=
		\frac{3p-4}{2p}.
	\end{equation*}
	It vanishes at $p=4/3$ and is positive for every gain above the
	endpoint. The differentiated principal block satisfies schematically
	\begin{equation*}
		\|DV_i^k\|_{L_t^\infty L_x^p}
		\lesssim
		\delta_{q+1}^{\frac12-\frac23s_p}
		r_{q+1}^{-s_p}.
	\end{equation*}
	Since $r_{q+1}=\lambda_{q+1}^{-\mu}$ and, up to a fixed prefactor,
	$\delta_{q+1}=\lambda_{q+1}^{-\beta}$, the loss relative to the
	$L^{4/3}$ scaling has frequency exponent
	\[
		\left(\mu+\frac23\beta\right)s_p.
	\]
	To fit the $\delta_{q+1}^{1/100}$ allowance in the displayed
	$C_t^\gamma L_x^{\bar p}$ bookkeeping, the principal term already
	requires
	\begin{equation*}
		\left(\mu+\frac23\beta\right)s_{\bar p}
		<
		\frac{49}{100}\beta.
	\end{equation*}
	The time-H\"older estimates impose slightly stronger versions of this
	inequality. Thus the null form removes a fatal derivative loss, whereas
	the small size of $\bar p-4/3$ is dictated by the amount of profile
	concentration that the remaining parameter hierarchy can absorb.
	More explicitly, if $\varepsilon=p-4/3$, then
	\begin{equation*}
		s_p
		=
		\frac{9\varepsilon}{2(4+3\varepsilon)},
		\qquad
		\varepsilon
		=
		\frac{8s_p}{3(3-2s_p)}.
	\end{equation*}
	For the exponent used in the theorem,
	$p=4/3+10^{-5}$, one has $s_p=9/800006$.

		\subsection{Further directions and open problems}
	\label{subsec:further-directions}
	
	The combination of moving localized profiles, temporal averaging, and the
	SQG null structure suggests the following open problems.
	\begin{enumerate}

		\item \emph{The full subcritical integrability range.}
		Can Theorem~\ref{thm:introduction-flexibility} be extended to every
		exponent \(4/3\leq p<3\), with
		\(\theta\in C([0,1];L^p(\mathbb T^2))\) and endpoint flexibility in
		\(L^p\)? In particular, can one construct Hamiltonian-nonconserving
		weak solutions with integrability arbitrarily close to \(L^3\)? The
		exponent \(p=3\) is the natural upper threshold, since
		\(\theta\in L^3([0,T]\times\mathbb T^2)\) guarantees Hamiltonian
		conservation~\cite{IV15}.
		
		\item \emph{Generalized and dissipative SQG.}
		For the generalized family
		\[
		u=\nabla^\perp\Lambda^{-2+\alpha}\theta,
		\]
		the source-moment scaling, the critical Lebesgue exponent, and the
		derivative loss depend on \(\alpha\). The traveling circular pairs
		of \cite{CQZZ23} suggest that moving localized profiles remain
		available in a range of these models. The main questions are whether
		the temporal reconstruction of the stress survives the modified
		scaling and whether the SQG null form has an appropriate fractional
		analogue. For dissipative equations, even a small fixed dissipation
		produces a high-frequency error not absorbed by the present
		iteration; a frequency-dependent vanishing-dissipation regime may be
		a more accessible first problem.
		
		\item \emph{The whole-space problem.}
		The underlying traveling profiles are defined on \(\mathbb R^2\),
		which makes a nonperiodic construction plausible. The present
		averaging argument, however, relies on closed rational trajectories
		on \(\mathbb T^2\). On the whole space a block escapes instead of
		repeatedly sampling the coefficient it is meant to reconstruct.
		A successful extension would require finite sweeping trajectories or
		a large-box limit, together with uniform control of moments and of
		the nonlocal tails of the potential velocity and the divergence
		correction.
	\end{enumerate}
	\subsection{Organization of the paper}
	
	Section~2 constructs the constant-speed and variable-speed traveling
	SQG blocks and derives their localization and scaling properties. Section~3
	formulates the SQG--Reynolds iteration and introduces the trajectories,
	temporal averaging, and perturbations. Section~4
	estimates the perturbations and the new Reynolds stress and verifies the
	parameter inequalities displayed. The Appendix collects the symmetric
	anti-divergence and null-form estimates used in the construction.

\subsection*{Acknowledgment}
EB's research is funded by the European Research Council (ERC) project MIND, Grant Agreement No.~101219635.  Q-H Nguyen's research is supported by the 
CAS Project for Young Scientists in Basic Research, Grant No.~YSBR-031; and the NSFC under Grant Nos.~1251101538 and 12595282. 
RJ's research is supported by the China Scholarship Council, Grant No. 202506230112.

\subsection*{AI use disclosure}
This paper was written by the authors and was not generated by artificial
intelligence. ChatGPT 5.6 was used only to check spelling and to assist the
authors in checking the correctness of the mathematical calculations and
results. All mathematical ideas, arguments, proofs, and conclusions are those
of the authors, who take full responsibility for the contents of the paper.
    
	\section{Traveling SQG building blocks}
	This section turns the whole-space traveling pair into periodic
	potential-velocity blocks suitable for the convex-integration iteration.
	We first localize and periodize the rescaled profile, and then derive the
	identities needed when its center, amplitude, and radius vary in time.
	
	\subsection{Whole-space traveling profile}
	We begin by recalling the compactly supported traveling active scalar
	that serves as the seed for the construction. Cao, Qin, Zhan, and Zou~\cite{CQZZ23} construct a nontrivial function $\Psi$ satisfying
	\begin{equation*}
		\left\{ \begin{array}{ll}
			&\Lambda \Psi=\lambda (\Psi-c_0x_2)_+\quad\text{in }\mathbb{R}^2_{+}:=\mathbb{R}^2\cap \{x_2>0\},\\&
			\displaystyle\lim_{|x|\to \infty}\Psi(x)=0,\\&
			\operatorname{supp}\Lambda\Psi\subset \overline{B(0,1)},\\&
			\Psi(x_1,0)=0,\\&
			\Psi(x_1,x_2)=-\Psi(x_1,-x_2),
		\end{array}\right.
	\end{equation*}
	for some $\lambda>0$ and $c_0>0$. Moreover,
	$\Psi\in C^{2-}:=\bigcap_{\varepsilon>0}C^{2-\varepsilon}$ and
	$\Lambda\Psi\in C^{0,1}$, with
	\begin{equation*}
		\langle x\rangle |\Psi(x)|
		+\langle x\rangle^2 |\nabla\Psi(x)|
		+\langle x\rangle^{2+\gamma_0}
		\sup_{0<|z|\leq 1}|z|^{-\gamma_0}
		|\nabla\Psi(x)-\nabla\Psi(x-z)|
		\lesssim_{\gamma_0}1
	\end{equation*}
	for every $\gamma_0\in(0,1)$. Define
	$\theta=\lambda(\Psi-c_0x_2)_+$ in the upper half-plane and extend it
	oddly across the $x_1$-axis.

	\subsection{Rotation, scaling, and moments}
	
	We fix a direction \(\xi\in S^1\). Let \(O_\xi\) be a rotation such that
	\(O_\xi e_1=-\xi\). All the objects below are first rotated in \(\mathbb R^2\)
	and then periodized on \(\mathbb T^2\). For simplicity, we keep the notation
	\[
	\theta_r,\quad u_r,\quad v_r,\quad V_r,\quad U_r,
	\]
	and suppress the dependence on \(\xi\).
	
	Let $\theta: \mathbb{R}^2\to\mathbb{R}$ be the traveling SQG profile satisfying
	\[
	W\partial_1\theta+\operatorname{div}(u\theta)=0,
	\qquad
	u=\nabla^\perp\Lambda_{\mathbb R^2}^{-1}\theta ,
	\]
	where $\operatorname{supp}\theta\subset B_1(0)$ and $W>0$ denotes the
	traveling speed. We normalize $W=1$ throughout the paper.
	
	The odd symmetry
	$\theta(x_1,x_2)=-\theta(x_1,-x_2)$ implies that
	\[
	\int_{\mathbb R^2}\theta\,dx=0,\qquad \int_{\mathbb R^2}x_1\theta(x)\,dx=0.
	\]
	Since $\theta(x_1,x_2)\geq0$ for $x_2>0$, define
	\[
		c_1:=\int_{\mathbb R^2}x_2\theta(x)\,dx>0.
	\]
	For \(0<r\ll1\), define the rotated and rescaled functions
	\[
	\theta_r(x)
	:=
	r^{-3/2}\theta\!\left(\frac{O_\xi^Tx}{r}\right),
	\]
	\[
	u_r(x)
	:=
	r^{-3/2}O_\xi u\!\left(\frac{O_\xi^Tx}{r}\right),
	\]
	and
	\[
	v_r(x)
	:=
	\nabla^\perp(-\Delta_{\mathbb R^2})^{-1}\theta_r(x)
	=
	r^{-1/2}O_\xi v\!\left(\frac{O_\xi^Tx}{r}\right).
	\]
	Then
	\begin{equation}
		\label{eq:rescaled-sqg-scalar}
		-r^{-3/2}\xi\cdot\nabla\theta_r
		+
		\operatorname{div}(u_r\theta_r)
		=
		0
		\qquad\text{in }\mathbb R^2 .
	\end{equation}
	
	Let
	\[
	\psi_r:=(-\Delta_{\mathbb R^2})^{-1}\theta_r,
	\qquad
	v_r=\nabla^\perp\psi_r .
	\]
	Since \(\int_{\mathbb R^2}\theta_r\,dx=0\) and \(v_r\) is
	irrotational outside \(B_r(0)\), Stokes' theorem yields a harmonic potential
	\(\Phi_r\) on \(\mathbb R^2\setminus B_r(0)\) such that
	\[
	v_r=\nabla\Phi_r
	\qquad\text{on }\mathbb R^2\setminus B_r(0).
	\]
	Moreover,
	\[
	\Phi_r(x)
	=
	-\frac{c_1}{2\pi}
	\frac{r^{3/2}(x\cdot\xi)}{|x|^2}
	+
	O\left(\frac{r^{5/2}}{|x|^2}\right).
	\]
	Indeed, 
	\[
	\begin{aligned}
		\psi_r(x)
		&=-\frac{1}{2\pi}\int_{\mathbb R^2}\log|x-y|\,\theta_r(y)\,dy\\
		&=-\frac{1}{2\pi}\int_{\mathbb R^2}(\log|x|+\log\Big|\frac{x}{|x|}-\frac{y}{|x|}\Big|)
		\theta_r(y)\,dy\\
		&=-\frac{1}{2\pi}\int_{\mathbb R^2}(\log|x|-\frac{x\cdot y}{|x|^2}+O\!\left(\frac{|y|^2}{|x|^2}\right))
		\theta_r(y)\,dy\\
		&=-\frac{1}{2\pi}\log|x|\int \theta_r(y)\,dy
		+\frac{1}{2\pi}\frac{x\cdot \int y\theta_r(y)\,dy}{|x|^2}
		+\frac{1}{2\pi}\int O\!\left(\frac{|y|^2}{|x|^2}\right)\theta_r(y)\,dy\\
		&=\frac{1}{2\pi}\frac{x\cdot \int y\theta_r(y)\,dy}{|x|^2}
		+\frac{1}{2\pi}\int O\!\left(\frac{|y|^2}{|x|^2}\right)\theta_r(y)\,dy.
	\end{aligned}
	\]
	Since
	\[
	\theta_r(x)
	=
	r^{-3/2}\theta\left(\frac{O_\xi^Tx}{r}\right),
	\]
	the change of variables \(x=rO_\xi y\) gives
	\[
	\begin{aligned}
		\int_{\mathbb R^2}x\theta_r(x)\,dx
		&=
		\int_{\mathbb R^2}
		rO_\xi y\,
		r^{-3/2}\theta(y)\,
		r^2\,dy  \\
		&=
		r^{3/2}O_\xi
		\int_{\mathbb R^2}y\theta(y)\,dy .
	\end{aligned}
	\]
	By the symmetry of the profile,
	\[
	\int_{\mathbb R^2}y\theta(y)\,dy
	=
	c_1e_2 .
	\]
	Since \(O_\xi e_2=-\xi^\perp\), we have
	\begin{equation}\label{eq:xthetar}
		\int_{\mathbb R^2}x\theta_r(x)\,dx
		=
		-c_1r^{3/2}\xi^\perp,
	\end{equation}
	and consequently,
	\begin{equation*}
		\psi_r(x)=-\frac{1}{2\pi} \frac{c_1 r^{\frac32} x\cdot \xi^\perp }{|x|^2}+ O(\frac{r^{\frac52}}{|x|^2}).
	\end{equation*}
	
	\subsection{Localization and periodization}
	The whole-space potential velocity has a noncompact far field, so it
	cannot be periodized directly as a localized block. We first remove its
	exterior gradient part and then restore incompressibility by a periodic
	gradient corrector.

	Let \(\chi\in C^\infty([0,\infty))\) satisfy
	\[
	\chi(s)=0\quad\text{for }s\le1,
	\qquad
	\chi(s)=1\quad\text{for }s\ge2,
	\]
	and set
	\[
	\chi_r(x):=\chi\!\left(\frac{|x|}{r^\alpha}\right),
	\qquad 0<\alpha<1.
	\]
	Define the compactly supported local potential-velocity block
	\[
	V_r^{\mathrm{loc}}
	:=
	v_r-\nabla(\chi_r\Phi_r),
	\]
	which is no longer divergence free:
	\[
	\operatorname{div}V_r^{\mathrm{loc}}
	=
	-\Delta(\chi_r\Phi_r).
	\]
	
	\begin{lemma}\label{lemma-2.1}
		The local potential-velocity block defined above has the following properties:
		\begin{enumerate}
			\item [(i)]
			$\operatorname{supp}V_r^{\mathrm{loc}}\subset B_{2r^\alpha}(0),
			\qquad
			\operatorname{curl}V_r^{\mathrm{loc}}=-\theta_r;$
			
			\vspace{0.6em}
			
			\item [(ii)]
			$\int_{\mathbb R^2}V_r^{\mathrm{loc}}\,dx
			=
			c_1r^{3/2}\xi.$
		\end{enumerate}
	\end{lemma}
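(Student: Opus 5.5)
For (i), I will split $\mathbb R^2$ into the region $|x|\ge 2r^\alpha$ and its complement $B_{2r^\alpha}$. On $|x|\ge2r^\alpha$ we have $\chi_r\equiv1$, and since $r^\alpha>r$ this region lies outside $B_r(0)$. There $v_r=\nabla\Phi_r$, so $V_r^{\mathrm{loc}}=v_r-\nabla\Phi_r=0$, which gives the support claim. For the curl, I use that $\operatorname{curl}\nabla(\chi_r\Phi_r)=0$ wherever $\chi_r\Phi_r$ is smooth. The function $\chi_r\Phi_r$ is supported in $|x|\ge r^\alpha$, where $\Phi_r$ is harmonic and hence smooth. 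So it is a smooth global function, with no singularity at $B_r$. Therefore $\operatorname{curl}V_r^{\mathrm{loc}}=\operatorname{curl}v_r=\operatorname{curl}\nabla^\perp\psi_r=\Delta\psi_r=-\theta_r$, using the paper's sign convention $\theta=-\operatorname{curl}v$.

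For (ii), I will compute the mean of $v_r$ on the ball rather than try to integrate $v_r$ over all of $\mathbb R^2$, which is not absolutely integrable. Since $V_r^{\mathrm{loc}}$ is supported in $B_R$ with $R=2r^\alpha$, we have $\int V_r^{\mathrm{loc}}=\int_{B_R}v_r-\int_{B_R}\nabla(\chi_r\Phi_r)$.
\begin{itemize}
\item By the divergence theorem, $\int_{B_R}\nabla(\chi_r\Phi_r)=\int_{\partial B_R}\Phi_r\nu\,dS$, because $\chi_r=1$ on $\partial B_R$.
\item Similarly, $\int_{B_R}\nabla^\perp\psi_r=\int_{\partial B_R}\psi_r\nu^\perp\,dS$, with the appropriate sign for $\nabla^\perp=(-\partial_2,\partial_1)$.
\end{itemize}
Both boundary integrals are then evaluated with the multipole expansions already derived. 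Only the dipole terms survive: the error terms $O(r^{5/2}/R^2)$ integrate over $|\partial B_R|\sim R$ to $O(r^{5/2}/R)$, which does not vanish for fixed $R$.

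To make the answer exact, I will use that $V_r^{\mathrm{loc}}$ vanishes for $|x|\ge2r^\alpha$. Hence the integral over $B_R$ is the same for every $R\ge 2r^\alpha$, so I may let $R\to\infty$ and the error terms vanish in the limit. Alternatively, the expansion of $\psi_r$ can be done exactly through the harmonic multipole series, whose higher modes integrate to zero against $\nu$ on circles.

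With $\psi_r\sim-\frac{c_1r^{3/2}}{2\pi}\frac{x\cdot\xi^\perp}{|x|^2}$, the dipole computation uses the identity $\int_{\partial B_R}\frac{(x\cdot a)}{|x|^2}\nu\,dS=\pi a$. For $\Phi_r$, this gives $\int\nabla(\chi_r\Phi_r)\to-\frac{c_1r^{3/2}}{2}\xi$. For $\psi_r$, I take $\int_{\partial B_R}\psi_r\nu^\perp$ with $\nu^\perp=(-\nu_2,\nu_1)$, which gives $-\frac{c_1r^{3/2}}{2}(\xi^\perp)^\perp=\frac{c_1r^{3/2}}{2}\xi$, since $(\xi^\perp)^\perp=-\xi$. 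The two contributions combine to $\frac{c_1r^{3/2}}2\xi+\frac{c_1r^{3/2}}2\xi=c_1r^{3/2}\xi$.

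The main obstacle is sign bookkeeping: the orientation of $\perp$, the rotation convention $O_\xi e_2=-\xi^\perp$, and the consistency between $\Phi_r$ and $\psi_r$, since $\nabla\Phi_r=\nabla^\perp\psi_r$ outside the core. I will cross-check by verifying that the stated dipole $\Phi_r$ indeed has gradient equal to $\nabla^\perp$ of the dipole $\psi_r$, so that the two halves are consistent.
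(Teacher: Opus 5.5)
Your proof is correct, and part (i) agrees with the paper. Part (ii), however, follows a different route.

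\textbf{The paper's route.} The paper never uses $\Phi_r$ or any far-field asymptotics in (ii). Because $V_r^{\mathrm{loc}}$ is compactly supported, integrating by parts against $x_2$ and $x_1$ gives $\int_{\mathbb R^2}V_{r,1}^{\mathrm{loc}}\,dx=\int_{\mathbb R^2}x_2\operatorname{curl}V_r^{\mathrm{loc}}\,dx$ and $\int_{\mathbb R^2}V_{r,2}^{\mathrm{loc}}\,dx=-\int_{\mathbb R^2}x_1\operatorname{curl}V_r^{\mathrm{loc}}\,dx$. Combined with (i), this yields $\int V_r^{\mathrm{loc}}=\bigl(\int x\theta_r\,dx\bigr)^\perp=c_1r^{3/2}\xi$ directly from \eqref{eq:xthetar}. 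The argument uses only compact support and the curl identity. It therefore shows at once that the mean depends only on the first moment of $\theta_r$, not on $\chi$, $\alpha$, or the normalization of $\Phi_r$. It also avoids both the limit $R\to\infty$ and the sign bookkeeping you flag as the main obstacle.

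\textbf{What your route needs.} Your boundary-integral approach requires the dipole asymptotics of both $\psi_r$ and $\Phi_r$, with remainders that are $o(R^{-1})$ on $\partial B_R$. Checking that the leading dipoles are compatible does not by itself control the remainder of $\Phi_r$. You should either cite the stated $O(r^{5/2}/|x|^2)$ bound for $\Phi_r$, or use the exact Laurent series of the holomorphic function $\psi_r+i\Phi_r$ on $\{|x|>r\}$. In that series only the first mode pairs nontrivially with $\nu$, and additive constants in $\Phi_r$ drop out.

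\textbf{What your route buys.} Your computation gives an informative decomposition. For $R\ge 2r^\alpha$, the field $v_r$ on $B_R$ and the subtracted term $-\nabla(\chi_r\Phi_r)$ each contribute exactly $\tfrac12c_1r^{3/2}\xi$. This makes visible that the mean of $V_r^{\mathrm{loc}}$ is not just the ball-average of $v_r$. It is doubled by removing the conditionally integrable dipole tail.
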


	\begin{proof}
		Statement~(i) follows directly from the definition of
		$V_r^{\mathrm{loc}}$. To prove~(ii), note that
		\(V_r^{\mathrm{loc}}\) is compactly supported and
		\[
		\operatorname{curl}V_r^{\mathrm{loc}}=-\theta_r,
		\]
		we can recover the spatial average of \(V_r^{\mathrm{loc}}\) from the first moment of
		\(\operatorname{curl}V_r^{\mathrm{loc}}\). Indeed, by integration by parts,
		\[
		\begin{aligned}
			\int_{\mathbb R^2}x_2\operatorname{curl}V_r^{\mathrm{loc}}\,dx
			&=
			\int_{\mathbb R^2}x_2
			\bigl(\partial_1V_{r,2}^{\mathrm{loc}}
			-\partial_2V_{r,1}^{\mathrm{loc}}\bigr)\,dx =
			\int_{\mathbb R^2}V_{r,1}^{\mathrm{loc}}\,dx,
		\end{aligned}
		\]
		and similarly
		\[
		-\int_{\mathbb R^2}x_1\operatorname{curl}V_r^{\mathrm{loc}}\,dx
		=
		\int_{\mathbb R^2}V_{r,2}^{\mathrm{loc}}\,dx .
		\]
		Therefore, by~\eqref{eq:xthetar},
		\[
		\begin{aligned}
			\int_{\mathbb R^2}V_r^{\mathrm{loc}}\,dx
			&=
			\left(
			\int_{\mathbb R^2}x_2\operatorname{curl}V_r^{\mathrm{loc}}\,dx,
			-\int_{\mathbb R^2}x_1\operatorname{curl}V_r^{\mathrm{loc}}\,dx
			\right)  \\
			&=
			\left(
			-\int_{\mathbb R^2}x_2\theta_r(x)\,dx,
			\int_{\mathbb R^2}x_1\theta_r(x)\,dx
			\right)  \\
			&=
			\left(
			\int_{\mathbb R^2}x\theta_r(x)\,dx
			\right)^\perp \\
			&=
			\left(-c_1r^{3/2}\xi^\perp\right)^\perp
			=
			c_1r^{3/2}\xi,
		\end{aligned}
		\]
		which proves the claim.
	\end{proof}
	
	We now periodize \(V_r^{\mathrm{loc}}\) on \(\mathbb T^2\), and still denote the
	periodized vector field by \(V_r^{\mathrm{loc}}\). Since
	\(\operatorname{supp}V_r^{\mathrm{loc}}\subset B_{2r^\alpha}(0)\) and \(r>0\) is
	sufficiently small, there is no overlap between different periodic copies.
	
	Define the divergence corrector
	\[
	V_r^c
	:=
	\nabla(-\Delta_{\mathbb T^2})^{-1}
	\operatorname{div}V_r^{\mathrm{loc}},
	\]
	where \((-\Delta_{\mathbb T^2})^{-1}\) is taken on mean-zero functions. Set
	\[
	V_r:=V_r^{\mathrm{loc}}+V_r^c .
	\]
	Then
	\[
	\operatorname{div}V_r
	=
	\operatorname{div}V_r^{\mathrm{loc}}
	+
	\Delta(-\Delta_{\mathbb T^2})^{-1}
	\operatorname{div}V_r^{\mathrm{loc}}
	=
	0.
	\]
	Moreover, since \(V_r^c\) is a gradient field, it has zero curl. Hence
	\[
	\operatorname{curl}V_r
	=
	\operatorname{curl}V_r^{\mathrm{loc}}
	=
	-\theta_r .
	\]
	Since \(V_r^c\) is periodic and is a gradient, its spatial integral vanishes:
	\[
	\int_{\mathbb T^2}V_r^c\,dx=0.
	\]
	Therefore, using the identity for \(V_r^{\mathrm{loc}}\) on \(\mathbb R^2\), we obtain
	\begin{equation}
		\label{eq:mean-Vr}
		\int_{\mathbb T^2}V_r\,dx
		=
		\int_{\mathbb T^2}V_r^{\mathrm{loc}}\,dx
		=
		\int_{\mathbb R^2}V_r^{\mathrm{loc}}\,dx
		=
		c_1r^{3/2}\xi .
	\end{equation}
	
	We define the mean-zero part of \(V_r\) by
	\[
	\widetilde V_r
	:=
	V_r-\fint_{\mathbb T^2}V_r\,dx .
	\]
	Equivalently, if the measure on \(\mathbb T^2\) is normalized so that
	\(|\mathbb T^2|=1\), then
	\[
	\widetilde V_r
	=
	V_r-\int_{\mathbb T^2}V_r\,dx
	=
	V_r-c_1r^{3/2}\xi .
	\]
	The vector field \(\widetilde V_r\) is divergence-free, mean-free, and satisfies
	\[
	\operatorname{curl}\widetilde V_r
	=
	-\theta_r .
	\]
	By uniqueness of the mean-zero Biot--Savart potential on the torus, we therefore have
	\[
	\widetilde V_r
	=
	\nabla^\perp(-\Delta_{\mathbb T^2})^{-1}\theta_r .
	\]
	Finally, define the corresponding SQG velocity by
	\[
	U_r
	:=
	\nabla^\perp\Lambda_{\mathbb T^2}^{-1}\theta_r .
	\]
	So,
	\[
	U_r
	=
	\Lambda_{\mathbb T^2}\widetilde V_r .
	\]
	Because \(\Lambda_{\mathbb T^2}\) annihilates constants, this can also be written as
	\[
	U_r
	=
	\Lambda_{\mathbb T^2}V_r,
	\]
	where the last identity is understood as
	\[
	\Lambda_{\mathbb T^2}V_r
	=
	\Lambda_{\mathbb T^2}\mathbb{P}_{\neq0}V_r .
	\]
	In particular,
	\[
	\widetilde V_r
	=
	\Lambda_{\mathbb T^2}^{-1}U_r,
	\qquad
	\widetilde V_r\neq V_r,
	\]
	because \(V_r\) has the nonzero spatial mean given in \eqref{eq:mean-Vr}.
	
	\subsection{Constant-speed block and estimates}
	We now record the two structural identities used later to move a block
	with variable amplitude and radius. The first expresses the traveling
	equation in momentum form, while the second isolates differentiation with
	respect to the physical scale.

	\begin{proposition}[Constant-speed SQG building block]
		\label{prop:sqg-constant-speed}
		Let \(V_r\), \(\widetilde V_r\), and \(U_r\) be defined as above. Then there exist
		pressures \(P_1\), \(P_2\) and symmetric tensors \(R_1\),  \(R_2\) such that
		\begin{equation}
			\label{eq:sqg-constant-speed-equation}
			-r^{-3/2}\xi\cdot\nabla V_r
			+
			U_r\cdot\nabla V_r
			-
			(\nabla V_r)^TU_r
			+
			\nabla P_1
			=
			\operatorname{div}R_1
			\qquad\text{on }\mathbb T^2,
		\end{equation}
		and
		\begin{equation}
			\label{eq:sqg-r-derivative-equation}
			\partial_rV_r
			=
			\frac{3}{2r}V_r
			-
			\operatorname{div}R_2
			-
			\nabla P_2
			\qquad\text{on }\mathbb T^2.
		\end{equation}
		Moreover, for every \(1<q<\infty\),
		\begin{equation}
			\label{eq:Vr-Lq-estimates}
			\|V_r\|_{L^q}
			\lesssim_q r^{\frac2q-\frac12},
			\qquad
			\|DV_r\|_{L^q}
			\lesssim_q r^{\frac2q-\frac32},
			\qquad
			\|D^2V_r\|_{L^q}
			\lesssim_q r^{\frac2q-\frac52},
		\end{equation}
		and
		\begin{equation*}
			\|\partial_rV_r\|_{L^q}
			\lesssim_q r^{2/q-3/2},
			\qquad
			\|D\partial_rV_r\|_{L^q}
			\lesssim_q r^{2/q-5/2}.
		\end{equation*}
		In addition, for \(1<p<\infty\),
		\begin{equation}
			\label{eq:R1-L1}
			\|R_1\|_{L^p(\mathbb T^2)}
			\lesssim_p r^{\frac{2}{p}+1},
		\end{equation}
		and
		\begin{equation}
			\label{eq:R2-L1}
			\|R_2\|_{L^p(\mathbb T^2)}
			\lesssim_p r^{\frac{2}{p}-\frac12}.
		\end{equation}
		Finally,
		\begin{equation*}
			\int_{\mathbb T^2}r^{-3/2}V_r\,dx
			=
			c_1\xi .
		\end{equation*}
	\end{proposition}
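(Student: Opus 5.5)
The plan is to work with $V_r=V_r^{\mathrm{loc}}+V_r^c$ and to reduce each claim to the whole-space traveling equation \eqref{eq:rescaled-sqg-scalar}, its exact scaling, and the harmonic far field. The last identity is immediate from \eqref{eq:mean-Vr}. For the $L^q$ bounds in \eqref{eq:Vr-Lq-estimates}, split $V_r^{\mathrm{loc}}=v_r(1-\chi_r)+(v_r-\nabla\Phi_r)\chi_r-\Phi_r\nabla\chi_r$. On $\mathbb R^2\setminus B_r$ we have $v_r=\nabla\Phi_r$, so the middle term vanishes. The first term is the rescaled whole-space field restricted to $B_{2r^\alpha}$. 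Since $v=\nabla^\perp(-\Delta)^{-1}\theta$ satisfies $|v(y)|\lesssim\langle y\rangle^{-2}$ (zero mass), we get $\|v_r\|_{L^q(\mathbb R^2)}=r^{2/q-1/2}\|v\|_{L^q}$ for $q>1$. The cutoff term $\Phi_r\nabla\chi_r$ lives on the annulus and has size $r^{3/2}r^{-2\alpha}$, which gives the smaller contribution $r^{3/2-2\alpha+2\alpha/q}$. The corrector $V_r^c$ is a Riesz transform of $V_r^{\mathrm{loc}}$ (the operator $\nabla\Delta^{-1}\operatorname{div}$), so it is bounded on $L^q$ for $1<q<\infty$. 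Derivatives are handled the same way: they commute with the Riesz projection, and $Dv_r$ scales like $r^{-1}$ times $v_r$. For $D^2V_r$ we need $\theta$ Lipschitz, and we use the decay bounds on $\Psi$ from \cite{CQZZ23}. The parameter $\alpha$ only needs to be close enough to $1$ (or treated as lower order) that the annulus terms are dominated.

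For $\partial_rV_r$, differentiate the scaling formula $v_r(x)=r^{-1/2}O_\xi v(O_\xi^Tx/r)$. This gives $\partial_rv_r=-\frac{1}{2r}v_r-\frac1r(x\cdot\nabla)v_r$, and we write $(x\cdot\nabla)v_r=\operatorname{div}(v_r\otimes x)-2v_r$. That yields $\partial_rv_r=\frac{3}{2r}v_r-\frac1r\operatorname{div}(v_r\otimes x)$. The tensor $v_r\otimes x$ is not symmetric, so we symmetrize: the antisymmetric part $A=\frac12(v_r\otimes x-x\otimes v_r)$ has divergence equal to a perpendicular gradient of a scalar plus a lower-order term, and these are absorbed into $\nabla P_2$ after the Helmholtz projection. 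On the torus the whole identity is applied to the localized version, with the cutoff and corrector contributions (all gradients or Riesz projections) absorbed into $P_2$ and into a symmetric $\mathcal R_0$ anti-divergence from the Appendix. The mean is consistent because $\partial_r(c_1r^{3/2}\xi)=\frac{3}{2r}c_1r^{3/2}\xi$. The size of $R_2$ is that of $v_r\otimes x/r$ on the support, namely $r^{2/p-1/2}$, plus the far-field pieces, which give \eqref{eq:R2-L1}.

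For \eqref{eq:sqg-constant-speed-equation}, use the 2D identity $U\cdot\nabla V-(\nabla V)^TU=(\operatorname{curl}V)U^\perp$, so the left side becomes $r^{-3/2}\xi\cdot\nabla V_r-\theta_rU_r^\perp$ modulo gradients. Taking the curl reduces this to the scalar equation \eqref{eq:rescaled-sqg-scalar}, with the whole-space velocity replaced by the periodic $U_r$. Hence the left side is a gradient plus the error $\theta_r(U_r-u_r)^\perp$. Its curl is $\operatorname{div}(\theta_r(U_r-u_r))$, and we define $R_1=\mathcal R_0$ of the corresponding vector field. The error comes from the torus periodization: $U_r-u_r$ is smooth on $B_r$, and the far field of $u_r$ decays like $r^{3/2}|x|^{-3}$, so the periodic images contribute only $O(r^{3/2})$ there. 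The main obstacle is estimating this nonlocal difference together with the $\Lambda_{\mathbb T^2}$ versus $\Lambda_{\mathbb R^2}$ kernel comparison. The target is $\|\theta_r(U_r-u_r)\|_{L^p}\lesssim r^{-3/2}r^{2/p}\cdot r^{3/2}$, plus a gain of one factor of $r$ from writing the term as a divergence of a compactly supported field with vanishing moment. This produces the required $r^{2/p+1}$.
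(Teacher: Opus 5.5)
The gap is in your proof of \eqref{eq:R2-L1}, and it sits in the range the paper later uses: $1<q<2$ in \eqref{eq:R3-L1-final}, and $p=11/10$ in the principal-block error.

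After $\nabla\partial_r(\chi_r\Phi_r)$ and the corrector are moved into $P_2$, the remainder is $-\operatorname{div}(V_r^{\mathrm{loc}}\otimes x/r)$, and you bound $R_2$ by the size of this tensor. But the tensor lives on $B_{2r^\alpha}$, not on the core. On $2r<|x|<r^\alpha$ it equals $\nabla\Phi_r\otimes x/r$, of size $r^{1/2}|x|^{-1}$. On the cutoff annulus it has size $r^{1/2-\alpha}$. For $p<2$ its $L^p$ norm is therefore of order $r^{1/2+\alpha(2/p-1)}$. This exceeds $r^{2/p-1/2}$ by the factor $r^{-(1-\alpha)(2/p-1)}$, and at $p=2$ one loses a logarithm. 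So ``plus the far-field pieces'' only works for $p>2$.

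This is not merely a lossy estimate. Where $V_r^{\mathrm{loc}}=\nabla f_r$ with $f_r=(1-\chi_r)\Phi_r$ (i.e.\ for $|x|>r$), one has $\operatorname{div}(\nabla f_r\otimes x/r)=\nabla\Pi$ with $\Pi=(x\cdot\nabla f_r+f_r)/r$. On the annulus, $\Pi$ is dominated by $-(x\cdot\nabla\chi_r)\Phi_r/r$, of size $r^{1/2-\alpha}$. Moreover $\mathcal R_0\nabla\Pi=2\nabla^2\Delta^{-1}\Pi-(\Pi-\fint\Pi)I$ controls $\Pi-\fint\Pi$ in $L^p$. Hence if you keep this gradient inside the stress, i.e.\ take $R_2=\mathcal R_0\operatorname{div}(V_r^{\mathrm{loc}}\otimes x/r)$, you get $\|R_2\|_{L^p}\gtrsim r^{1/2+\alpha(2/p-1)}$ for every $p<2$.

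The missing idea is the paper's localization at scale $r$. Put $\Pi_r^{\mathrm{ext}}=(1-\zeta_r)\Pi$ into $P_2$, where $\zeta_r\equiv1$ on $B_{2r}$ and $\operatorname{supp}\zeta_r\subset B_{3r}$. The remainder $F_r=\operatorname{div}(V_r^{\mathrm{loc}}\otimes x/r)-\nabla\Pi_r^{\mathrm{ext}}$ is then smooth, mean-free, supported in $B_{3r}$, and pointwise $O(r^{-3/2})$. Proposition~\ref{prop:antidiv-compact} then gives $\|\mathcal R_0F_r\|_{L^p}\lesssim r\cdot r^{2/p-3/2}$ for all $1<p<\infty$.

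Your symmetrization step should also be dropped, because it is wrong as stated. The divergence of an antisymmetric $2\times2$ tensor is $\pm\nabla^\perp a$, which is divergence-free. Its gradient part in the Helmholtz decomposition is therefore zero, and nothing can be absorbed into $\nabla P_2$; it is locally a gradient only where $\Delta a=0$. In any case no symmetrization is needed, since $\mathcal R_0$ returns a symmetric tensor for every mean-zero vector field.

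The rest of the proposal is sound:
\begin{itemize}
\item The $L^q$ bounds via the global scaling of $v_r$ work. No restriction on $\alpha$ is needed, since the annulus gap $(1-\alpha)(2+k-2/q)$ is positive for every $0<\alpha<1$.
\item Your $R_1$ works too: you take it directly as $\mathcal R_0$ of the mean-zero field $\theta_r(U_r-u_r)^\perp$ supported in $B_r$, with $|U_r-u_r|\lesssim r^{3/2}$ there from the periodic images. This is slightly more direct than the paper's route of applying $\nabla^\perp(-\Delta_{\mathbb T^2})^{-1}$ to the periodized scalar equation, and yields the same $r^{2/p+1}$.
\end{itemize}
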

	
	\begin{proof}
		We first prove \eqref{eq:sqg-constant-speed-equation}. Since
		\(\theta_r\) and \(u_r\theta_r\) are compactly supported in \(B_r(0)\), the scalar
		equation \eqref{eq:rescaled-sqg-scalar} can be rewritten on \(\mathbb T^2\) as
		\begin{equation}
			\label{eq:periodic-scalar-equation}
			-r^{-3/2}\xi\cdot\nabla\theta_r
			+
			\operatorname{div}(U_r\theta_r)
			=
			\operatorname{div}F_1 ,
		\end{equation}
		where
		\[
		F_1
		:=
		U_r\theta_r
		-
		\operatorname{Per}_{\mathbb T^2}(u_r\theta_r).
		\]
		We claim that \(F_1\) has zero spatial mean. Indeed,
		\[
		\int_{\mathbb T^2}
		\operatorname{Per}_{\mathbb T^2}(u_{r,j}\theta_r)\,dx
		=
		\int_{\mathbb R^2}u_{r,j}\theta_r\,dx
		=
		0
		\]
		by skew-adjointness of the whole-space operator
		\(\nabla^\perp\Lambda_{\mathbb R^2}^{-1}\). Similarly,
		\[
		\int_{\mathbb T^2}U_{r,j}\theta_r\,dx=0
		\]
		by skew-adjointness of the torus operator
		\((\nabla^\perp\Lambda_{\mathbb T^2}^{-1})_j\).
		
		Let
		\[
		\mathcal A:=\nabla^\perp(-\Delta_{\mathbb T^2})^{-1}.
		\]
		Applying \(\mathcal A\) to \eqref{eq:periodic-scalar-equation} and using the fact
		that \(\mathcal A\) commutes with derivatives, gives
		\begin{equation}
			\label{eq:B-applied}
			-r^{-3/2}\xi\cdot\nabla\widetilde V_r
			+
			\mathcal A\operatorname{div}(U_r\theta_r)
			=
			\mathcal A\operatorname{div}F_1 .
		\end{equation}
		Since \(V_r-\widetilde V_r\) is a constant vector, we have
		\[
		\xi\cdot\nabla\widetilde V_r=\xi\cdot\nabla V_r .
		\]
		
		Next, we identify the nonlinear term. Define
		\[
		W_r
		:=
		U_r\cdot\nabla V_r-(\nabla V_r)^TU_r.
		\]
		In components,
		\[
		(W_r)_i
		=
		U_{r,j}\partial_j(V_r)_i
		-
		U_{r,j}\partial_i(V_r)_j
		=
		U_{r,j}\bigl(\partial_j(V_r)_i-\partial_i(V_r)_j\bigr).
		\]
		In two dimensions this gives
		\[
		W_r=(\operatorname{curl}V_r)U_r^\perp=-\theta_r U_r^\perp.
		\]
		Therefore
		\[
		\operatorname{curl}W_r
		=
		-\operatorname{div}(\theta_rU_r)
		=
		-U_r\cdot\nabla\theta_r,
		\]
		because \(\operatorname{div}U_r=0\). On the other hand,
		\[
		\operatorname{curl}\bigl(\mathcal A\operatorname{div}(U_r\theta_r)\bigr)
		=
		-\operatorname{div}(U_r\theta_r).
		\]
		Hence
		\[
		\mathcal A\operatorname{div}(U_r\theta_r)-W_r
		\]
		is curl-free. Also, it is mean-free. Thus there exists a scalar \(P_1\) such that
		\begin{equation}
			\label{eq:B-nonlinear}
			\mathcal A\operatorname{div}(U_r\theta_r)
			=
			U_r\cdot\nabla V_r
			-
			(\nabla V_r)^TU_r
			+
			\nabla P_1 .
		\end{equation}
		Since $\mathcal A\operatorname{div}F_1$ has zero spatial mean on
		$\mathbb{T}^2$, we may apply the symmetric anti-divergence operator
		\(\mathcal R_0\) and set
		\[
		R_1:=\mathcal R_0\mathcal A\operatorname{div}F_1.
		\]
		Then \(R_1\) is symmetric and satisfies
		\[
		\operatorname{div}R_1
		=
		\mathcal A\operatorname{div}F_1 .
		\]
		Combining this identity with \eqref{eq:B-applied} and
		\eqref{eq:B-nonlinear} gives
		\[
		-r^{-3/2}\xi\cdot\nabla V_r
		+
		U_r\cdot\nabla V_r
		-
		(\nabla V_r)^TU_r
		+
		\nabla P_1
		=
		\operatorname{div}R_1.
		\]
		This proves \eqref{eq:sqg-constant-speed-equation}.
		
		We now prove the \(r\)-derivative identity. First,
		\[
		v_r(x)=r^{-1/2}O_\xi v(O_\xi^Tx/r),
		\]
		so
		\[
		\partial_rv_r
		=
		-\frac{1}{2r}v_r-\frac{x}{r}\cdot\nabla v_r.
		\]
		Since \(\operatorname{div}v_r=0\),
		\[
		\operatorname{div}\left(v_r\otimes\frac{x}{r}\right)
		=
		\frac{x}{r}\cdot\nabla v_r+\frac{2}{r}v_r.
		\]
		Thus
		\[
		\partial_rv_r
		=
		\frac{3}{2r}v_r
		-
		\operatorname{div}\left(v_r\otimes\frac{x}{r}\right).
		\]
		We next compute the $r$-derivative of the cutoff term in
		$V_r^{\mathrm{loc}}=v_r-\nabla(\chi_r\Phi_r)$. Since
		\[
		\chi_r(x)=\chi\left(\frac{|x|}{r^\alpha}\right),
		\]
		we have
		\[
		\partial_r\chi_r
		=
		-\frac{\alpha}{r}x\cdot\nabla\chi_r .
		\]
		Moreover, with the normalization of \(\Phi_r\) induced by scaling,
		\[
		\partial_r\Phi_r
		=
		\frac{1}{2r}\Phi_r
		-
		\frac{x}{r}\cdot\nabla\Phi_r .
		\]
		Therefore,
		\[
		\begin{aligned}
			\partial_r(\chi_r\Phi_r)
			&=
			(\partial_r\chi_r)\Phi_r+\chi_r\partial_r\Phi_r  \\
			&=
			-\frac{\alpha}{r}(x\cdot\nabla\chi_r)\Phi_r
			+
			\frac{1}{2r}\chi_r\Phi_r
			-
			\frac{\chi_r}{r}x\cdot\nabla\Phi_r  \\
			&=
			\frac{1}{2r}\chi_r\Phi_r
			-
			\frac{x}{r}\cdot\nabla(\chi_r\Phi_r)
			-
			\frac{\alpha-1}{r}(x\cdot\nabla\chi_r)\Phi_r .
		\end{aligned}
		\]
		Set
		\[
		G_r
		:=
		\frac{\alpha-1}{r}(x\cdot\nabla\chi_r)\Phi_r .
		\]
		Then
		\[
		\partial_r(\chi_r\Phi_r)
		=
		\frac{1}{2r}\chi_r\Phi_r
		-
		\frac{x}{r}\cdot\nabla(\chi_r\Phi_r)
		-
		G_r .
		\]
		Taking a gradient and using that \(\nabla(\chi_r\Phi_r)\) is itself a gradient
		field, we obtain
		\[
		\partial_r\nabla(\chi_r\Phi_r)
		=
		\frac{3}{2r}\nabla(\chi_r\Phi_r)
		-
		\operatorname{div}\left(
		\nabla(\chi_r\Phi_r)\otimes\frac{x}{r}
		\right)
		-
		\nabla G_r .
		\]
		Combining this with the identity for \(\partial_r v_r\), we conclude that
		\[
		\partial_rV_r^{\mathrm{loc}}
		=
		\frac{3}{2r}V_r^{\mathrm{loc}}
		+
		\nabla G_r
		-
		\operatorname{div}\left(
		V_r^{\mathrm{loc}}\otimes\frac{x}{r}
		\right).
		\]
		Finally, since \(x\cdot\nabla\chi_r\) is supported where
		\[
		r^\alpha\le |x|\le 2r^\alpha,
		\]
		the scalar \(G_r\) is supported in the cutoff annulus
		\[
		B_{2r^\alpha}\setminus B_{r^\alpha}.
		\]
		Since
		\[
		V_r^c
		=
		\nabla(-\Delta_{\mathbb T^2})^{-1}
		\operatorname{div}V_r^{\mathrm{loc}},
		\]
		we obtain, using the identity for \(\partial_rV_r^{\mathrm{loc}}\),
		\[
		\begin{aligned}
			\partial_rV_r^c
			&=
			\nabla(-\Delta_{\mathbb T^2})^{-1}
			\operatorname{div}(\partial_rV_r^{\mathrm{loc}}) \\
			&=
			\nabla(-\Delta_{\mathbb T^2})^{-1}
			\operatorname{div}
			\left(
			\frac{3}{2r}V_r^{\mathrm{loc}}
			+\nabla G_r
			-\operatorname{div}
			\left(
			V_r^{\mathrm{loc}}\otimes\frac{x}{r}
			\right)
			\right)  \\
			&=
			\frac{3}{2r}V_r^c
			-\nabla G_r
			-\nabla(-\Delta_{\mathbb T^2})^{-1}
			\operatorname{div}\operatorname{div}
			\left(
			V_r^{\mathrm{loc}}\otimes\frac{x}{r}
			\right).
		\end{aligned}
		\]
		Here we used
		\[
		\nabla(-\Delta_{\mathbb T^2})^{-1}\Delta G_r=-\nabla G_r,
		\]
		up to an irrelevant additive constant.
		
		Adding this identity to the identity for \(\partial_rV_r^{\mathrm{loc}}\), the
		\(\nabla G_r\) terms cancel and we get
		\[
		\partial_rV_r
		=
		\frac{3}{2r}V_r
		-
		\operatorname{div}
		\left(
		V_r^{\mathrm{loc}}\otimes\frac{x}{r}
		\right)
		-
		\nabla(-\Delta_{\mathbb T^2})^{-1}
		\operatorname{div}\operatorname{div}
		\left(
		V_r^{\mathrm{loc}}\otimes\frac{x}{r}
		\right).
		\]
		
		It remains to separate the genuinely non-gradient part from the exterior
		gradient part. Set
		\[
		A_r
		:=
		V_r^{\mathrm{loc}}\otimes\frac{x}{r}.
		\]
		In the exterior region, where
		\[
		V_r^{\mathrm{loc}}
		=
		\nabla\bigl((1-\chi_r)\Phi_r\bigr),
		\]
		we have the identity
		\[
		\operatorname{div}A_r
		=
		\nabla\left[
		\frac{
			x\cdot\nabla\bigl((1-\chi_r)\Phi_r\bigr)
			+
			(1-\chi_r)\Phi_r
		}{r}
		\right].
		\]
		Indeed, if \(f=(1-\chi_r)\Phi_r\), then
		\[
		\operatorname{div}\left(\nabla f\otimes\frac{x}{r}\right)
		=
		\frac{1}{r}
		\left(
		x\cdot\nabla\nabla f+2\nabla f
		\right)
		=
		\nabla\left(
		\frac{x\cdot\nabla f+f}{r}
		\right).
		\]
		
		Choose a smooth radial cutoff \(\zeta_r\in C^\infty_c(B_{3r})\) such that
		\[
		0\le \zeta_r\le 1,
		\qquad
		\zeta_r\equiv1 \ \text{on } B_{2r}.
		\]
		Since \(r\ll r^\alpha\), the region \(\{|x|\ge 2r\}\cap B_{2r^\alpha}\) lies in the
		exterior region where the above gradient identity is valid. Define
		\[
		f_r:=(1-\chi_r)\Phi_r
		\]
		and
		\[
		\Pi_r^{\mathrm{ext}}
		:=
		(1-\zeta_r)
		\frac{x\cdot\nabla f_r+f_r}{r}.
		\]
		Then
		\[
		F_r
		:=
		\operatorname{div}A_r-\nabla\Pi_r^{\mathrm{ext}}
		\]
		is smooth, mean-free, and supported in \(B_{3r}\). Therefore we may define
		\[
		R_2:=\mathcal R_0 F_r,
		\]
		where \(\mathcal R_0\) is the symmetric anti-divergence operator. Then
		\[
		\operatorname{div}R_2=F_r.
		\]
		Consequently,
		\[
		\partial_rV_r
		=
		\frac{3}{2r}V_r
		-
		\operatorname{div}R_2
		-
		\nabla P_2,
		\]
		where
		\[
		P_2
		:=
		\Pi_r^{\mathrm{ext}}
		+
		(-\Delta_{\mathbb T^2})^{-1}
		\operatorname{div}\operatorname{div}
		\left(
		V_r^{\mathrm{loc}}\otimes\frac{x}{r}
		\right).
		\]
		This gives \eqref{eq:sqg-r-derivative-equation}.

		We now prove the estimates for \(V_r\). It is enough to estimate
		\(V_r^{\mathrm{loc}}\), since
		\[
		V_r^c=\nabla(-\Delta_{\mathbb T^2})^{-1}\operatorname{div}V_r^{\mathrm{loc}}
		\]
		is obtained from \(V_r^{\mathrm{loc}}\) by a zero-order Calderón--Zygmund operator on
		\(\mathbb T^2\).
		
		We split the support of \(V_r^{\mathrm{loc}}\) into three regions. Since \(0<\alpha<1\),
		for \(r\ll1\) we have \(r<r^\alpha\).
		First, in the near-field region \(|x|\le 2r\), the cutoff vanishes for
		all sufficiently small $r$, hence
		\[
		V_r^{\mathrm{loc}}=v_r .
		\]
		Recall that
		\[
		v=\nabla^\perp(-\Delta_{\mathbb R^2})^{-1}\theta .
		\]
		Since $\theta\in C^{0,1}$ and $\theta$ is compactly supported, we have
		\[
		\nabla\theta\in L^q(\mathbb R^2)
		\qquad\text{for every }1\leq q\leq \infty.
		\]
		Moreover,
		\[
		D^2v
		=
		T(\nabla\theta),
		\]
		where \(T\) is a zero-order Calderón--Zygmund operator. Hence
		\begin{equation*}
			\|D^2v\|_{L^q(\mathbb R^2)}
			\lesssim_q
			\|\nabla\theta\|_{L^q(\mathbb R^2)}
			<\infty ,\qquad 1<q<\infty.
		\end{equation*}
		Also, since \(\theta\in C_c^{0,1}\subset C_c^{0,\gamma}\) for every
		\(0<\gamma<1\), Calderón--Zygmund Schauder estimates imply
		\[
		Dv\in C^{0,\gamma}_{\mathrm{loc}}(\mathbb R^2),
		\]
		and in particular
		\[
		\|v\|_{L^\infty(B_2(0))}
		+
		\|Dv\|_{L^\infty(B_2(0))}
		<\infty.
		\]
		For $k=0,1$, the local $C^1$ bound on $v$ and pointwise scaling give
		\[
		|D^kv_r(x)|
		\lesssim
		r^{-\frac12-k},
		\qquad |x|\le 2r.
		\]
		Therefore
		\begin{equation}
			\label{eq:core-k01}
			\|D^kv_r\|_{L^q(B_{2r})}
			\lesssim_q
			r^{\frac2q-\frac12-k},
			\qquad k=0,1.
		\end{equation}
		For \(k=2\),
		\begin{equation}
			\label{eq:core-D2}
			\|D^2v_r\|_{L^q(B_{2r})}
			\leq
			r^{-5/2}r^{2/q}
			\|D^2v\|_{L^q(\mathbb R^2)}
			\lesssim_q
			r^{\frac2q-\frac52}.
		\end{equation}
		Combining \eqref{eq:core-k01} and \eqref{eq:core-D2}, we have
		\begin{equation*}
			\|D^kV_r^{\mathrm{loc}}\|_{L^q(B_{2r})}
			\lesssim_q
			r^{\frac2q-\frac12-k},
			\qquad k=0,1,2.
		\end{equation*}

		Second, in the region \(2r<|x|<r^\alpha\), the cutoff is still zero and
		\(V_r^{\mathrm{loc}}=v_r=\nabla\Phi_r\). From the exterior expansion of the harmonic
		potential,
		\[
		|D^kv_r(x)|\lesssim \frac{r^{3/2}}{|x|^{2+k}},
		\qquad 2r<|x|<r^\alpha.
		\]
		Here the separation $|x|\geq2r$ makes the multipole expansion uniform;
		no pointwise second-derivative estimate is used in the near field.
		Thus
		\[
		\|D^kV_r^{\mathrm{loc}}\|_{L^q(2r<|x|<r^\alpha)}
		\lesssim
		r^{3/2}\left(\int_{2r}^{r^\alpha}\rho^{1-(2+k)q}\,d\rho\right)^{1/q}
		\lesssim
		r^{2/q-1/2-k}.
		\]
		
		Finally, in the cutoff annulus \(r^\alpha<|x|<2r^\alpha\),
		\[
		V_r^{\mathrm{loc}}=(1-\chi_r)\nabla\Phi_r-\Phi_r\nabla\chi_r.
		\]
		Using
		\[
		|\Phi_r(x)|\lesssim \frac{r^{3/2}}{|x|},
		\qquad
		|D^m\chi_r|\lesssim r^{-\alpha m},
		\]
		we obtain
		\[
		|D^kV_r^{\mathrm{loc}}(x)|
		\lesssim
		r^{3/2-\alpha(2+k)}.
		\]
		Consequently,
		\[
		\|D^kV_r^{\mathrm{loc}}\|_{L^q(r^\alpha<|x|<2r^\alpha)}
		\lesssim
		r^{3/2-\alpha(2+k)+2\alpha/q}.
		\]
		Since
		\[
		3/2-\alpha(2+k)+2\alpha/q
		-
		(2/q-1/2-k)
		=
		(1-\alpha)\left(2+k-\frac2q\right)>0,
		\]
		this contribution is smaller than \(r^{2/q-1/2-k}\).
		
		Combining the three regions gives
		\[
		\|D^kV_r^{\mathrm{loc}}\|_{L^q(\mathbb T^2)}
		\lesssim_q
		r^{2/q-1/2-k},
		\qquad k=0,1,2.
		\]
		
		Next, we prove the estimates for $\partial_r V_r$.
		From the expression for $\partial_rV_r$ and the Calderón--Zygmund estimates,
		we obtain
		\begin{equation*}
			\|\partial_rV_r\|_{L^q}
			\lesssim \frac1r \|V_r\|_{L^q}+ \|
			\operatorname{div}(V_r^{\mathrm{loc}}\otimes\frac{x}{r})
			\|_{L^q}\lesssim 
			r^{2/q-3/2},
		\end{equation*}
		\[
		\|D\partial_rV_r\|_{L^q}
		\lesssim \frac1r \|D V_r\|_{L^q}+ \|
		D\operatorname{div}(V_r^{\mathrm{loc}}\otimes\frac{x}{r})
		\|_{L^q}\lesssim 
		r^{2/q-5/2},
		\]
		where the estimates of
		$\|\operatorname{div}(V_r^{\mathrm{loc}}\otimes x/r)\|_{L^q}$ and
		$\|D\operatorname{div}(V_r^{\mathrm{loc}}\otimes x/r)\|_{L^q}$ follow
		from the same regional decomposition used for
		$\|DV_r^{\mathrm{loc}}\|_{L^q(\mathbb T^2)}$.

		Next, we estimate the tensor. For \(R_1\), we use the zero mean of \(\theta_r\). The difference between the
		torus SQG kernel and the whole-space SQG kernel is smooth near the origin.
		Let $K_{\mathbb R^2}$ and $K_{\mathbb T^2}$ denote the kernels of
		$\nabla^\perp(-\Delta_{\mathbb R^2})^{-1/2}$ and
		$\nabla^\perp(-\Delta_{\mathbb T^2})^{-1/2}$, respectively. On
		$B_{1/2}(0)$ one may write
		\[
		K_{\mathbb T^2}=K_{\mathbb R^2}+H,
		\qquad H\in C^\infty(B_{1/2}(0)).
		\]
		In particular, $\|\nabla H\|_{L^\infty(B_{1/2})}<\infty$.
		Hence, writing
		\[
		R_1=\mathcal{R}_0\mathcal{A}\div F_1=\mathcal{R}_0\mathcal{A}\div\div \mathcal{R}_0 F_1,
		\]
		for every \(1<p<\infty\), we have
		\begin{equation*}
			\begin{aligned}
				\|R_1\|_{L_x^p}
				&\lesssim_p r\|F_1\|_{L_x^p}                                      \\
				&\lesssim_p r\|U_r-u_r\|_{L^\infty(\operatorname{supp}\theta_r)}
				\|\theta_r\|_{L_x^p}     
				\\
				&\lesssim_p
				r\left\|
				\int H(x-y)\theta_r(y)\,dy
				\right\|_{L^\infty(\operatorname{supp}\theta_r)}
				\|\theta_r\|_{L_x^p}                                                 \\
				&\lesssim_p
				r\left\|
				\int \bigl(H(x-y)-H(x)\bigr)\theta_r(y)\,dy
				\right\|_{L^\infty(\operatorname{supp}\theta_r)}
				\|\theta_r\|_{L_x^p}                                                 \\
				&\lesssim_p
				r\|\nabla H\|_{L^\infty}
				\left(\int_{\mathbb T^2}|y|\,|\theta_r(y)|\,dy\right)
				\|\theta_r\|_{L_x^p}                                                 \\
				&\lesssim_p
				r r^{3/2} r^{\frac{2}{p}-\frac{3}{2}}
				\lesssim_p r^{\frac{2}{p}+1} .
			\end{aligned}
		\end{equation*}
		The first inequality follows from the $L^p$-boundedness of the
		Calderón--Zygmund operator
		$\mathcal R_0\mathcal A\operatorname{div}\operatorname{div}$ and the
		localized anti-divergence estimate in
		Proposition~\ref{prop:antidiv-compact}. The fourth line uses
		$\int_{\mathbb T^2}\theta_r\,dx=0$. This proves \eqref{eq:R1-L1}.
		Finally, we estimate \(R_2\). Since \(F_r\) is smooth, mean-free, and supported
		in a ball of radius \(O(r)\), the localized anti-divergence estimate gives
		\[
		\|R_2\|_{L^p}
		=
		\|\mathcal R_0F_r\|_{L^p}
		\lesssim_p
		r\|F_r\|_{L^p}.
		\]
		On \(B_{3r}\), the core estimates yield
		\[
		|V_r^{\mathrm{loc}}|\lesssim r^{-1/2},
		\qquad
		|DV_r^{\mathrm{loc}}|\lesssim r^{-3/2}.
		\]
		Therefore
		\[
		\left|
		\operatorname{div}
		\left(
		V_r^{\mathrm{loc}}\otimes\frac{x}{r}
		\right)
		\right|
		\lesssim
		\frac{|x|}{r}|DV_r^{\mathrm{loc}}|
		+
		\frac1r|V_r^{\mathrm{loc}}|
		\lesssim
		r^{-3/2}
		\qquad\text{on }B_{3r}.
		\]
		Similarly,
		\[
		|\nabla\Pi_r^{\mathrm{ext}}|\lesssim r^{-3/2}
		\qquad\text{on }B_{3r}.
		\]
		Thus
		\[
		\|F_r\|_{L^p}
		\lesssim_p
		r^{2/p-3/2}.
		\]
		Consequently,
		\[
		\|R_2\|_{L^p}
		\lesssim_p
		r\|F_r\|_{L^p}
		\lesssim_p
		r^{2/p-1/2}.
		\]
		This proves \eqref{eq:R2-L1}.
		
	\end{proof}
	\begin{remark}[The divergence corrector as a pressure gauge]
		\label{rem:divergence-corrector-pressure-gauge}
		Although \(V_r^c\) is nonlocal, no separate estimate of its spatial
		tail is needed in the iteration. Since
		\(\operatorname{div}V_r^{\mathrm{loc}}\) has zero spatial mean, we may set
		\[
			\phi_r
			:=
			(-\Delta_{\mathbb T^2})^{-1}
			\operatorname{div}V_r^{\mathrm{loc}},
			\qquad
			V_r^c=\nabla\phi_r.
		\]
		The operator
		\[
			\nabla(-\Delta_{\mathbb T^2})^{-1}\operatorname{div}
		\]
		is a Calder\'on--Zygmund operator of order zero and commutes with spatial
		derivatives and with \(\partial_r\). Consequently, for \(1<q<\infty\),
		\[
			\|D^mV_r^c\|_{L^q}
			\lesssim_q
			\|D^mV_r^{\mathrm{loc}}\|_{L^q},
			\qquad 0\leq m\leq2,
		\]
		and
		\[
			\|D^m\partial_rV_r^c\|_{L^q}
			\lesssim_q
			\|D^m\partial_rV_r^{\mathrm{loc}}\|_{L^q},
			\qquad 0\leq m\leq1.
		\]
		Moreover, \(D^2\phi_r\) is symmetric. Hence, for every sufficiently
		regular vector field \(A\),
		\[
			A\cdot\nabla V_r^c-(\nabla V_r^c)^TA
			=
			D^2\phi_r A-(D^2\phi_r)^TA
			=
			0.
		\]
		Thus the corrector makes no contribution through the second slot of the
		momentum nonlinearity:
		\[
			A\cdot\nabla V_r-(\nabla V_r)^TA
			=
			A\cdot\nabla V_r^{\mathrm{loc}}
			-(\nabla V_r^{\mathrm{loc}})^TA.
		\]
		Although the associated velocity \(U_r\) remains nonlocal, the full
		self-interaction satisfies
		\[
			U_r\cdot\nabla V_r-(\nabla V_r)^TU_r
			=
			-\theta_rU_r^\perp,
		\]
		and is therefore supported in \(\operatorname{supp}\theta_r\). All other
		terms involving the full profiles \(V_r\) and \(U_r\) are controlled using
		the global \(L^q\) bounds above.

		The gradient structure of \(V_r^c\) is preserved under translations,
		time-dependent changes of radius, multiplication by time-dependent scalar
		coefficients, and time differentiation. More precisely, if \(a=a(t)\),
		\(r=r(t)\), and \(X=X(t)\), then
		\[
			\partial_t\!\left[
				a(t)V_{r(t)}^c(x-X(t))
			\right]
			=
			\nabla_x\partial_t\!\left[
				a(t)\phi_{r(t)}(x-X(t))
			\right].
		\]
		Thus every term obtained by differentiating this expression in time is a
		spatial gradient and may be incorporated into the pressure. In particular,
		\[
			\begin{aligned}
				&r(t)^{-3/2}V_{r(t)}^c(x-X(t))
				\frac{d}{dt}\bigl(\eta(t)r(t)^{3/2}\bigr)
				\\
				&\qquad =
				\nabla_x\!\left[
					r(t)^{-3/2}\phi_{r(t)}(x-X(t))
					\frac{d}{dt}\bigl(\eta(t)r(t)^{3/2}\bigr)
				\right].
			\end{aligned}
		\]
		Here \(r(t)\) and \(\eta(t)\), after evaluation along a fixed trajectory,
		are independent of the spatial variable. Consequently, with \(X(t)=x(t)\),
		the compactly supported source in
		Proposition~\ref{prop:sqg-nonconstant-speed} is
		\[
			S(x,t)
			=
			r(t)^{-3/2}
			V_{r(t)}^{\mathrm{loc}}(x-x(t)).
		\]
		In the source-replacement estimate, compact support is used only for \(S\)
		and its auxiliary local comparison profile, never for the full profiles
		\(V_r\) or \(U_r\). Finally, the spatial smoothing introduced below
		preserves the gradient structure because
		\[
			\rho_\ell*_xV_r^c
			=
			\nabla(\rho_\ell*_x\phi_r).
		\]
	\end{remark}

	\paragraph{Space--time smoothing of the profile family.}
	The iteration needs a profile family that is smooth jointly in space and
	in the radius parameter, while preserving the structural identities above
	up to an absorbable error.
	
	We adapt the regularization device of \cite[Remark~2.2]{BCK26}. Fix a
	compact interval $I\Subset(0,r_0)$ containing the radii of one
	variable-speed block, and let $\rho_\ell$ be a smooth, even spatial
	mollifier on $\mathbb T^2$. The scale $\ell>0$ is fixed with respect to
	$r\in I$. With
	\[
	\mathcal N(U,V):=U\cdot\nabla V-(\nabla V)^TU,
	\]
	define
	\[
	\begin{gathered}
	V_{r,\ell}^{\mathrm{loc}}:=\rho_\ell*_xV_r^{\mathrm{loc}},
	\qquad
	V_{r,\ell}^{c}:=\rho_\ell*_xV_r^c
	=\nabla(-\Delta_{\mathbb T^2})^{-1}
	\operatorname{div}V_{r,\ell}^{\mathrm{loc}},\\
	V_{r,\ell}:=V_{r,\ell}^{\mathrm{loc}}+V_{r,\ell}^{c}
	=\rho_\ell*_xV_r,
	\qquad
	U_{r,\ell}:=\rho_\ell*_xU_r
	=\Lambda_{\mathbb T^2}\mathbb P_{\neq0}V_{r,\ell}.
	\end{gathered}
	\]
	Set
	\[
	\mathcal C_{r,\ell}
	:=\mathcal N(U_{r,\ell},V_{r,\ell})
	-\rho_\ell*_x\mathcal N(U_r,V_r),
	\]
	and, for $j=1,2$, define
	\[
	P_{j,\ell}:=\rho_\ell*_xP_j,
	\qquad
	R_{2,\ell}:=\rho_\ell*_xR_2,
	\qquad
	R_{1,\ell}:=\rho_\ell*_xR_1
	+\mathcal R_0\mathcal C_{r,\ell}.
	\]
	For every linked pair
	$U=\Lambda_{\mathbb T^2}\mathbb P_{\neq0}V$, the skew-adjointness of
	the Riesz transforms gives
	$\int_{\mathbb T^2}\mathcal N(U,V)\,dx=0$. Hence
	$\mathcal C_{r,\ell}$ has zero spatial mean, so the last definition is
	legitimate. Convolving the two identities of
	Proposition~\ref{prop:sqg-constant-speed} and using the definition of
	$\mathcal C_{r,\ell}$ gives the exact equations
	\begin{equation*}
	\begin{aligned}
	-r^{-3/2}\xi\cdot\nabla V_{r,\ell}
	+\mathcal N(U_{r,\ell},V_{r,\ell})
	+\nabla P_{1,\ell}
	&=\operatorname{div}R_{1,\ell},\\
	\partial_rV_{r,\ell}
	&=\frac{3}{2r}V_{r,\ell}
	-\operatorname{div}R_{2,\ell}-\nabla P_{2,\ell}.
	\end{aligned}
	\end{equation*}
	It is essential here that $\ell$ is independent of $r$, so that
	$\partial_r$ commutes with convolution. Spatial convolution with this
	fixed kernel makes the scaled family $C^\infty$ jointly in $(x,r)$ on
	$\mathbb T^2\times I$; consequently, its composition with smooth
	$r(t)$, $x(t)$, and $\eta(t)$ is smooth in space--time.

	Young's inequality preserves all the displayed $L^q$ bounds for
	$V_r$, its spatial derivatives through order two, its $r$-derivative and
	the spatial derivative thereof, as well as the bound for $R_2$. Moreover,
	\[
	\mathcal R_0\mathcal C_{r,\ell}\longrightarrow0
	\quad\text{in }L^p(\mathbb T^2)
	\quad\text{uniformly for }r\in I
	\]
	as $\ell\downarrow0$. Thus, at each iteration stage, one chooses a
	single stage-dependent scale $\ell_q$, independent of $r$ and $t$ and
	common to the finitely many blocks at that stage, sufficiently small to
	absorb this commutator into the stated $R_1$ (and hence $R_3$) budget for
	the exponents used at that stage.
	Convolution also preserves the source normalization exactly:
	\[
	\int_{\mathbb T^2}r^{-3/2}V_{r,\ell}^{\mathrm{loc}}\,dx=c_1\xi.
	\]
	Finally, choosing $\ell_q\leq\inf_{r\in I}r^\alpha$ gives
	\[
	\operatorname{supp}V_{r,\ell}^{\mathrm{loc}}
	\subset B_{2r^\alpha+\ell_q}(0)
	\subset B_{3r^\alpha}(0).
	\]
	The harmless change from the support constant $2$ to $3$ is incorporated
	below. After making this choice, we suppress the subscript $\ell_q$.

	\subsection{Variable-speed and variable-radius blocks}
	We next define the trajectory of the center of a building block along
	a straight line on the two-dimensional torus. In the iteration below, the
	direction $\xi$ will be chosen rational so that this line is periodic.
	Let \(r:\mathbb T^2\to(0,\infty)\) be smooth with \(0<r(x)\ll1\), and let
	\(\eta:\mathbb R\to[0,\infty)\) be smooth. Let the center \(x(t)\) solve the ODE
	\begin{equation}
		\label{eq:sqg-center-ode}
		x'(t)
		=
		\frac{\eta(t)}{r(x(t))^{3/2}}\xi,
		\qquad
		x(t_0)=x_0.
	\end{equation}
	We write
	\[
	r(t):=r(x(t)).
	\]
	Define
	\[
	V^p(x,t)
	:=
	\eta(t)V_{r(t)}(x-x(t)),
	\]
	and
	\[
	U^p(x,t)
	:=
	\eta(t)U_{r(t)}(x-x(t)).
	\]
	
	\begin{proposition}[Variable-speed SQG building block]
		\label{prop:sqg-nonconstant-speed}
		There exist a vector field \(S\), a pressure \(P\), and a symmetric tensor \(R_3\) such that
		\begin{equation}
			\label{eq:sqg-nonconstant-equation}
			\partial_tV^p
			+
			U^p\cdot\nabla V^p
			-
			(\nabla V^p)^TU^p
			+
			\nabla P
			=
			S\frac{d}{dt}\bigl(\eta(t)r(t)^{3/2}\bigr)
			+
			\operatorname{div}R_3,
		\end{equation}
		where 
		\[
		S(x,t)
		:=
		\frac{1}{r(t)^{3/2}}V_{r(t)}^{\mathrm{loc}}(x-x(t)).
		\]
		Moreover,
		\begin{equation*}
			\int_{\mathbb T^2}S(x,t)\,dx
			=
			c_1\xi.
		\end{equation*}
		For every \(1<q<\infty\),
		\begin{equation}
			\label{eq:Vp-Lq-final}
			\|V^p\|_{L_t^\infty L_x^q}
			\lesssim_q
			\|\eta\|_{L_t^\infty}
			\left\|r^{\frac2q-\frac12}\right\|_{L_x^\infty},
		\end{equation}
		\begin{equation}
			\label{eq:DVp-Lq-final}
			\|DV^p\|_{L_t^\infty L_x^q}
			\lesssim_q
			\|\eta\|_{L_t^\infty}
			\left\|r^{\frac2q-\frac32}\right\|_{L_x^\infty},
		\end{equation}
		\begin{equation}
			\label{eq:DDVp-Lq-final}
			\|D^2 V^p\|_{L_t^\infty L_x^q}
			\lesssim_q
			\|\eta\|_{L_t^\infty}
			\left\|r^{\frac2q-\frac52}\right\|_{L_x^\infty},
		\end{equation}
		\begin{equation}
			\label{eq:dtVp-Lq-final}
			\begin{aligned}
				\|\partial_tV^p\|_{L_t^\infty L_x^q}
				\lesssim_q&
				\|\eta'\|_{L_t^\infty}
				\left\|r^{\frac2q-\frac12}\right\|_{L_x^\infty}     \\
				&+
				\|\eta\|_{L_t^\infty}^2
				\left[
				\left\|r^{\frac2q-3}\right\|_{L_x^\infty}
				+
				\|\nabla\log r\|_{L_x^\infty}
				\left\|r^{\frac2q-2}\right\|_{L_x^\infty}
				\right],
			\end{aligned}
		\end{equation}
		and
		\begin{equation}
			\label{eq:dtDVp-Lq-final}
			\begin{aligned}
				\|\partial_tDV^p\|_{L_t^\infty L_x^q}
				\lesssim_q&
				\|\eta'\|_{L_t^\infty}
				\left\|r^{\frac2q-\frac32}\right\|_{L_x^\infty}     \\
				&+
				\|\eta\|_{L_t^\infty}^2
				\left[
				\left\|r^{\frac2q-4}\right\|_{L_x^\infty}
				+
				\|\nabla\log r\|_{L_x^\infty}
				\left\|r^{\frac2q-3}\right\|_{L_x^\infty}
				\right].
			\end{aligned}
		\end{equation}
		Finally, for every $1<q<2$,
		\begin{equation}
			\label{eq:R3-L1-final}
			\|R_3\|_{L_t^\infty L_x^1}
			\le C_q
			\|\eta\|_{L_t^\infty}^2
			\left(
			\|r\|_{L_x^\infty}^{\frac{2}{q}+1}
			+
			\|\nabla\log r\|_{L_x^\infty}
			\|r\|_{L_x^\infty}^{\frac{2}{q}-1}
			\right).
		\end{equation}
	\end{proposition}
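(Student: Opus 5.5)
The proof is a direct time differentiation of $V^p(x,t)=\eta(t)V_{r(t)}(x-x(t))$, after which the two identities of Proposition~\ref{prop:sqg-constant-speed} (in their smoothed form, with $\ell_q$ fixed) are substituted. By the chain rule,
\[
\partial_tV^p=\eta'V_{r}(\cdot-x(t))+\eta\, r'(t)\,(\partial_rV_r)(\cdot-x(t))-\eta\, x'(t)\cdot\nabla V_r(\cdot-x(t)),
\]
with $r'(t)=\nabla r(x(t))\cdot x'(t)=\eta r^{-3/2}\,\xi\cdot\nabla r(x(t))$. For the transport term I use the ODE \eqref{eq:sqg-center-ode}, which gives $-\eta x'\cdot\nabla V_r=-\eta^2 r^{-3/2}\xi\cdot\nabla V_r$. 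Multiplying \eqref{eq:sqg-constant-speed-equation} by $\eta^2$ and using bilinearity of the nonlinearity turns this into $-\mathcal N(U^p,V^p)-\eta^2\nabla P_1+\eta^2\operatorname{div}R_1$, all translated by $x(t)$. For the radius term I insert \eqref{eq:sqg-r-derivative-equation}:
\[
\eta r'\partial_rV_r=\tfrac{3}{2}\eta\tfrac{r'}{r}V_r-\eta r'\operatorname{div}R_2-\eta r'\nabla P_2 .
\]
The $V_r$ terms then collect as
\[
\Bigl(\eta'+\tfrac32\eta\tfrac{r'}{r}\Bigr)V_r=r^{-3/2}V_r\,\tfrac{d}{dt}\bigl(\eta r^{3/2}\bigr).
\]
Splitting $V_r=V_r^{\mathrm{loc}}+\nabla\phi_r$, the gradient part goes into the pressure, exactly as in Remark~\ref{rem:divergence-corrector-pressure-gauge}. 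What remains is $S\frac{d}{dt}(\eta r^{3/2})$ with $S=r^{-3/2}V_r^{\mathrm{loc}}(\cdot-x(t))$. The mean identity $\int S=c_1\xi$ is Lemma~\ref{lemma-2.1}(ii), which convolution preserves.

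This leaves
\[
P=\eta^2P_1+\eta r'P_2-r^{-3/2}\phi_r\tfrac{d}{dt}(\eta r^{3/2}),\qquad R_3=\eta^2R_1-\eta r'R_2,
\]
all translated by $x(t)$, and symmetry of $R_3$ is inherited from $R_1$ and $R_2$. For \eqref{eq:R3-L1-final} I bound $\|R_1\|_{L^1}\le\|R_1\|_{L^q}$ on the unit torus and apply \eqref{eq:R1-L1}, which gives $\eta^2 r^{2/q+1}$. The second term is bounded by $|\eta r'|\,\|R_2\|_{L^q}\lesssim\eta^2 r^{-3/2}|\nabla r|\,r^{2/q-1/2}=\eta^2|\nabla\log r|\,r^{2/q-1}$. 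The restriction $q<2$ is what makes both powers $r^{2/q\pm1}$ monotone in the direction that lets me pass to $\|r\|_{L^\infty}$: the exponents $2/q+1$ and $2/q-1$ are positive. With $r\ll1$, evaluating at $r(t)\le\|r\|_{L^\infty}$ then bounds the expression by the $\sup$.

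The $L^q$ bounds \eqref{eq:Vp-Lq-final}--\eqref{eq:DDVp-Lq-final} follow from translation invariance and \eqref{eq:Vr-Lq-estimates}, evaluated at $r=r(t)$ and bounded by $\|r^{s}\|_{L^\infty_x}$, a form that is valid for either sign of $s$. For \eqref{eq:dtVp-Lq-final} I estimate the three terms of the chain-rule formula separately:
\[
|\eta'|\,r^{2/q-1/2};\qquad |\eta x'|\,\|DV_r\|_{L^q}\lesssim\eta^2 r^{-3/2}r^{2/q-3/2}=\eta^2r^{2/q-3};\qquad |\eta r'|\,\|\partial_rV_r\|_{L^q}\lesssim\eta^2|\nabla\log r|\,r^{2/q-2}.
\]
Bound \eqref{eq:dtDVp-Lq-final} follows the same way, with one extra spatial derivative, using $\|D^2V_r\|_{L^q}$ and $\|D\partial_rV_r\|_{L^q}$. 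Smoothness in $(x,t)$, needed to justify the chain rule, comes from the fixed-$\ell$ mollification.

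The main point to verify carefully is the exact algebraic recombination: that the $\frac{3}{2r}V_r$ term from the scale identity combines with $\eta'V_r$ precisely into $\frac{d}{dt}(\eta r^{3/2})$. This exponent $3/2$ matches the $r^{-3/2}$ speed law in the ODE. I also need to check that the corrector $\phi_r$ contributes only gradients, including when $\partial_r$ and $\partial_t$ hit it, so that it is absorbed into $P$ and $S$ is compactly supported. A secondary issue is that the $R_1$ constant includes the smoothing commutator $\mathcal R_0\mathcal C_{r,\ell}$. That commutator is absorbed by the stage-dependent choice of $\ell_q$ described above, so it does not affect the stated bound.
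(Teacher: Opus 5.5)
Your proposal is correct and follows the paper's proof essentially step for step. The shared steps are: the chain rule for $\partial_tV^p$; substitution of the ODE and of the constant-speed identity multiplied by $\eta^2$; insertion of the $r$-derivative identity; the recombination $\eta'V_r+\tfrac32\eta\tfrac{r'}{r}V_r=r^{-3/2}V_r\tfrac{d}{dt}(\eta r^{3/2})$; absorption of the gradient corrector into $P$; and the same choice $R_3=\eta^2R_1-\eta r'R_2$, estimated term by term in $L^q$. Your remark that $q<2$ is exactly what gives $2/q-1>0$, and hence $r(t)^{2/q-1}\le\|r\|_{L^\infty_x}^{2/q-1}$, is a useful clarification that the paper leaves implicit.
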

	
	\begin{proof}
		We compute
		\[
		\begin{aligned}
			\partial_tV^p
			&=
			\eta'V_r
			+
			\eta r'\partial_rV_r
			-
			\eta x'(t)\cdot\nabla V_r .
		\end{aligned}
		\]
		Here and below, all profile families on the right-hand side are evaluated
		at the radius \(r=r(t)\) and the spatial point \(x-x(t)\).
		By the ODE \eqref{eq:sqg-center-ode},
		\[
		x'(t)=\eta r^{-3/2}\xi.
		\]
		Therefore
		\[
		-\eta x'(t)\cdot\nabla V_r
		=
		-\eta^2r^{-3/2}\xi\cdot\nabla V_r.
		\]
		Using the constant-speed identity
		\eqref{eq:sqg-constant-speed-equation}, we get
		\[
		\begin{aligned}
			-r^{-3/2}\xi\cdot\nabla V_r
			=
			-U_r\cdot\nabla V_r
			+
			(\nabla V_r)^TU_r
			-
			\nabla P_1
			+
			\operatorname{div}R_1 .
		\end{aligned}
		\]
		Thus
		\[
		\begin{aligned}
			\partial_tV^p
			&=
			-U^p\cdot\nabla V^p
			+
			(\nabla V^p)^TU^p
			-
			\nabla(\eta^2P_1)
			+
			\operatorname{div}(\eta^2R_1)      \\
			&\quad
			+
			\eta'V_r
			+
			\eta r'\partial_rV_r .
		\end{aligned}
		\]
		Using
		\[
		\partial_rV_r
		=
		\frac{3}{2r}V_r-\operatorname{div}R_2-\nabla P_2,
		\]
		we obtain
		\[
		\eta r'\partial_rV_r
		=
		\frac{3}{2r}\eta r'V_r
		-
		\eta r'\operatorname{div}R_2
		-
		\eta r'\nabla P_2.
		\]
		Since
		\[
		\frac{d}{dt}(\eta r^{3/2})
		=
		\eta'r^{3/2}
		+
		\frac32\eta r^{1/2}r',
		\]
		we have
		\[
		\eta'V_r+\frac{3}{2r}\eta r'V_r
		=
		r^{-3/2}V_r
		\frac{d}{dt}(\eta r^{3/2}).
		\]
		Therefore
		\[
		\begin{aligned}
			\partial_tV^p
			+
			U^p\cdot\nabla V^p
			-
			(\nabla V^p)^TU^p
			+
			\nabla(\eta^2P_1+\eta r'P_2)
			\\
			=
			r^{-3/2}V_r
			\frac{d}{dt}(\eta r^{3/2})
			+
			\operatorname{div}(\eta^2R_1-\eta r'R_2).
		\end{aligned}
		\]
		Moreover, for the source/sink term,
		\[
		\begin{aligned}
			r^{-3/2}V_r
			\frac{d}{dt}(\eta r^{3/2})
			=&
			r^{-3/2}\left(V_r^{\mathrm{loc}}+V_r^c\right)
			\frac{d}{dt}(\eta r^{3/2})
			\\
			=&
			r^{-3/2} V_r^{\mathrm{loc}}
			\frac{d}{dt}(\eta r^{3/2}) 
			+\nabla\left( r^{-3/2} (-\Delta_{\mathbb T^2})^{-1}\operatorname{div}V_r^{\mathrm{loc}} \frac{d}{dt}(\eta r^{3/2})  \right).
		\end{aligned}
		\]
		
		This proves \eqref{eq:sqg-nonconstant-equation} by setting
		\[
		\begin{aligned}
			P(x,t)
			:=&
			\eta(t)^2P_1(x-x(t))
			+
			\eta(t)r'(t)P_2(x-x(t))
			\\
			&-
			r^{-3/2} (-\Delta_{\mathbb T^2})^{-1}\operatorname{div}V_r^{\mathrm{loc}}(x-x(t)) \frac{d}{dt}(\eta(t) r^{3/2}),
		\end{aligned}
		\]
		and 
		\[
		R_3(x,t)
		:=
		\eta(t)^2R_1(x-x(t))
		-
		\eta(t)r'(t)R_2(x-x(t)).
		\]
		Since \(R_1\) and \(R_2\) are symmetric, the tensor \(R_3\) is symmetric
		as well.
		
		The mean identity follows from \eqref{eq:mean-Vr}:
		\[
		\int_{\mathbb T^2}S(x,t)\,dx
		=
		r(t)^{-3/2}\int_{\mathbb T^2}V_{r(t)}^{\mathrm{loc}}\,dx
		=
		c_1\xi.
		\]
		
		The estimates \eqref{eq:Vp-Lq-final}, \eqref{eq:DVp-Lq-final}, and
		\eqref{eq:DDVp-Lq-final} follow directly from
		\eqref{eq:Vr-Lq-estimates}.
		
		For the time derivative, we use
		\[
		\partial_tV^p
		=
		\eta'V_r+\eta r'\partial_rV_r-\eta x'(t)\cdot\nabla V_r.
		\]
		Furthermore,
		\[
		|x'(t)|\lesssim |\eta|r^{-3/2},
		\]
		and
		\[
		r'(t)
		=
		\nabla r(x(t))\cdot x'(t)
		=
		\eta r^{-3/2}\xi\cdot\nabla r(x(t))
		=
		\eta r^{-1/2}\xi\cdot\nabla\log r(x(t)).
		\]
		Using
		\[
		\|V_r\|_{L^q}\lesssim_q r^{\frac2q-\frac12},
		\qquad
		\|\partial_rV_r\|_{L^q}\lesssim_q r^{\frac2q-\frac32},
		\]
		and
		\[
		\|DV_r\|_{L^q}\lesssim_q r^{\frac2q-\frac32},
		\]
		we get
		\[
		\begin{aligned}
			\|\partial_tV^p\|_{L^q}
			\lesssim_q&
			|\eta'|r^{\frac2q-\frac12}
			+
			|\eta|^2
			\|\nabla\log r\|_{L^\infty}
			r^{\frac2q-2}
			+
			|\eta|^2r^{\frac2q-3}.
		\end{aligned}
		\]
		Taking the supremum in \(t\) gives \eqref{eq:dtVp-Lq-final}.
		
		Similarly,
		\[
		\partial_tDV^p
		=
		\eta'DV_r+\eta r'D\partial_rV_r-\eta x'(t)\cdot D^2V_r.
		\]
		Using
		\[
		\|DV_r\|_{L^q}\lesssim_q r^{\frac2q-\frac32},
		\]
		\[
		\|D\partial_rV_r\|_{L^q}\lesssim_q r^{\frac2q-\frac52},
		\]
		and
		\[
		\|D^2V_r\|_{L^q}\lesssim_q r^{\frac2q-\frac52},
		\]
		we obtain
		\[
		\begin{aligned}
			\|\partial_tDV^p\|_{L^q}
			\lesssim_q&
			|\eta'|r^{\frac2q-\frac32}
			+
			|\eta|^2
			\|\nabla\log r\|_{L^\infty}
			r^{\frac2q-3}
			+
			|\eta|^2r^{\frac2q-4}.
		\end{aligned}
		\]
		Taking the supremum in \(t\) proves \eqref{eq:dtDVp-Lq-final}.
		
		Finally, since
		\[
		R_3=\eta^2R_1-\eta r'R_2,
		\]
		we have
		\[
		\begin{aligned}
			\|R_3\|_{L^q}
			&\le
			|\eta|^2\|R_1\|_{L^q}
			+
			|\eta|\,|r'|\,\|R_2\|_{L^q}          \\
			&\leq
			C_q|\eta|^2 r^{\frac{2}{q}+1}
			+
			C_q|\eta|^2
			\|\nabla\log r\|_{L^\infty}
			r^{-1/2}r^{\frac{2}{q}-\frac12}           \\
			&=
			C_q |\eta|^2
			\left(
			r^{\frac{2}{q}+1}+
			\|\nabla\log r\|_{L^\infty}r^{\frac{2}{q}-1} 
			\right).
		\end{aligned}
		\]
		Since $|\mathbb T^2|=1$, Hölder's inequality gives
		$\|R_3\|_{L^1}\leq\|R_3\|_{L^q}$. Taking the supremum in time gives
		\eqref{eq:R3-L1-final}.
	\end{proof}
	\begin{remark}
		For the remainder of the paper, we normalize the source by
		\begin{equation*}
			\int_{\mathbb T^2} S(x,t)\,dx=\xi.
		\end{equation*}
		This convention is obtained by replacing $S$ with $c_1^{-1}S$ and
		absorbing the fixed factor $c_1$ into the scalar coefficient multiplying
		$S$ in~\eqref{eq:sqg-nonconstant-equation}. Because $c_1$ depends only on
		the fixed traveling profile, this changes only the implicit constants in
		Proposition~\ref{prop:sqg-nonconstant-speed}. We retain the same notation
		after this normalization.
	\end{remark}

	\section{Convex-integration iteration}
	This section assembles the building blocks of Section~2 into one step of
	the SQG--Reynolds iteration. We regularize the background, decompose its
	stress, prescribe the radii and trajectories, and then introduce the
	auxiliary source and time corrector used to define the next stress.
	
	\subsection{The SQG--Reynolds system}
	
	We work with the SQG equation in momentum form. At the \(q\)-th step we
	construct quadruples
	\[
	(v_q,u_q,p_q,R_q)
	\]
	solving the SQG--Reynolds system
	\begin{equation*}
		\left\{
		\begin{aligned}
			\partial_t v_q
			+ u_q\cdot\nabla v_q
			-(\nabla v_q)^T u_q
			+\nabla p_q
			&= \operatorname{div} R_q,\\
			\operatorname{div}v_q&=0,\\
			u_q&=\Lambda \mathbb P_{\neq 0}v_q.
		\end{aligned}
		\right.
	\end{equation*}
	Here \(\mathbb P_{\neq 0}\) denotes the projection onto mean-zero functions on
	\(\mathbb T^2\). We also define the associated active scalar
	\begin{equation*}
		\theta_q:=-\operatorname{curl}v_q.
	\end{equation*}
	Then
	\[
	u_q=\mathcal R^\perp\theta_q
	=(-\mathcal R_2\theta_q,\mathcal R_1\theta_q),
	\]
	and, conversely, by the boundedness of Riesz transforms on \(L^p(\mathbb T^2)\),
	for every \(1<p<\infty\),
	\begin{equation*}
		\|u_q\|_{L^p}
		\sim_p
		\|\theta_q\|_{L^p}.
	\end{equation*}
	Consequently, we have for every \(1<p<\infty\),
	
	\begin{equation*}
		\|u_q\|_{L^p}
		\sim
		\|\theta_q\|_{L^p}
		\sim
		\|\operatorname{curl}v_q\|_{L^p}
		\sim
		\|\nabla v_q\|_{L^p},
	\end{equation*}
	The implicit constants depend only on $p$. We will use this
	equivalence for $p\in[4/3,4]$; over this compact interval the constants may
	be bounded uniformly by a constant $C>0$.

	\subsection{Parameters and iteration statement}
	
	We first specify the scale hierarchy and the inductive estimates that the
	construction is designed to propagate.
	
	Let
	\begin{equation}\label{eq:parameters}
		\lambda_{q+1}=\lambda_q^\sigma,\qquad
		\delta_q=\lambda_1^{2\beta}\lambda_q^{-\beta},\qquad
		r_{q+1}=\lambda_{q+1}^{-\mu},\qquad
		\tau_{q+1}=\lambda_{q+1}^{-\kappa}.
	\end{equation}
	Here $\lambda_0\gg1$ is chosen sufficiently large, $\sigma\in\mathbb N$ is large, and
	$\beta,\mu>0$, $\kappa\in \mathbb{N}$ will be fixed later after all error estimates have been derived. We also assume $\beta\sigma<1$, which will be frequently used in the following sections.

	\begin{proposition}[Iteration step]\label{prop:iteration}
		Let $\bar{p}=\frac{4}{3}+10^{-5}$, $\gamma=10^{-5}$, $n\in\mathbb N$,
		and let the parameters be as in \eqref{eq:parameters}. There
		exists a constant $M\geq 1$
		such that the following holds for every
		$\lambda_0\ge\lambda_0(M)$.
		
		Let
		\[
		(v_q,u_q,p_q, R_q)
		\]
		be a weak solution to the SQG--Reynolds system
		\[
		\begin{cases}
			\partial_t v_q+u_q\cdot\nabla v_q-(\nabla v_q)^Tu_q+\nabla p_q
			=\operatorname{div} R_q,\\
			\operatorname{div}v_q=0,\\
			u_q=\Lambda \mathbb{P}_{\neq0}v_q,
		\end{cases}
		\]
		satisfying
		\begin{align}
			\| R_q\|_{L_t^\infty L_x^1}
			&\le \delta_{q+1},\notag \\
			\|v_q\|_{L_t^\infty L_x^4}
			&\le 2\delta_0^{1/2}-\delta_q^{1/2},\notag \\
			\|D_{t,x}v_q\|_{L_t^\infty L_x^4}+
			\|D_{t,x}u_q\|_{L_t^\infty L_x^4}
			&\le \lambda_q^n. \label{eq:ind-A}
		\end{align}	
		Then there exists another weak solution
		\[
		(v_{q+1},u_{q+1},p_{q+1}, R_{q+1})
		\]
		of the SQG--Reynolds system such that
		\begin{align}
			\| R_{q+1}\|_{L_t^\infty L_x^1}
			&\le \delta_{q+2},\notag \\
			\|v_{q+1}\|_{L_t^\infty L_x^4}
			&\le 2\delta_0^{1/2}-\delta_{q+1}^{1/2},\notag \\
			\|D_{t,x}v_{q+1}\|_{L_t^\infty L_x^4}+
			\|D_{t,x}u_{q+1}\|_{L_t^\infty L_x^4}
			&\le \lambda_{q+1}^n,\notag \\
			\|v_{q+1}-v_q\|_{L_t^\infty L_x^4}
			&\le M\delta_{q+1}^{1/2}, \label{eq:ind-B}\\
			\|D v_{q+1}\|_{C_t^\gamma L_x^{\bar{p}}}
			&\le \|D v_q\|_{C_t^\gamma L_x^{\bar{p}}}
			+\lambda_0\delta_{q+1}^{\frac{1}{100}},\notag\\
            \|Dv_{q+1}-D v_q\|_{C_t^\gamma L_x^{\bar p}} 
            &\le 2 \lambda_0\delta_{q+1}^{\frac{1}{100}}\label{eq:Dv-increment}.
		\end{align}
		with endpoint estimates
		\begin{equation}\label{eq:est-endpoint}
			\|u_{q+1}(\cdot,0)-u_{q}(\cdot,0)\|_{L_x^4}\leq \lambda_q^{-1},\;
			\|u_{q+1}(\cdot,1)-u_{q}(\cdot,1)\|_{L_x^4}\leq \lambda_q^{-1}.
		\end{equation}
	\end{proposition}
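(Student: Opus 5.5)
We will carry out one convex-integration step in the now-standard order: mollify, decompose the stress, build the moving-block perturbation with a time corrector, and then estimate the new stress and the perturbation.

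\emph{Mollification and stress decomposition.} First mollify $(v_q,u_q,R_q)$ in space and time at a scale $\ell=\lambda_q^{-N}$ for a suitable $N$. The commutator stress from the nonlinearity is controlled by \eqref{eq:ind-A}, which gives a term of size $\ell\lambda_q^{n}\ll\delta_{q+2}$. Next write
\[
-R_\ell+\delta_{q+1}\,\mathrm{Id}\text{-type correction}=\sum_{i=1}^4a_i\,\xi_i\otimes\xi_i ,
\]
using four rational directions $\xi_i$ of period $L_i\sim\lambda_{q+1}$. The coefficients satisfy $a_i\ge\delta_{q+1}$ and $\|a_i\|_{L^1}\lesssim\delta_{q+1}$, and $a_i$ is smoothed with $C^1$ norms polynomial in $\lambda_q$. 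Freeze $a_i$ on intervals of length $\tau_{q+1}$ to get $a_i^k$, set $(r_i^k)^{3/2}=r_{q+1}^{3/2}a_i^k$, and let the centers $x_i^k(t)$ solve \eqref{eq:sqg-center-ode} with temporal cutoffs $\zeta_i^k$ supported on disjoint subintervals. The principal perturbation is $w=\sum_{i,k}V^p_{i,k}$ with amplitude $\eta_i^k\zeta_i^k$, as in Proposition~\ref{prop:sqg-nonconstant-speed}. Add a time corrector $w_t$ that solves $\partial_tw_t=P_{\tau_{q+1}}\mathcal U_{q+1}-\mathcal U_{q+1}$ modulo gradients, where $\mathcal U_{q+1}$ is the sum of the sources $S$ times $\frac{d}{dt}(\eta r^{3/2})$, after the replacement by the fixed-width auxiliary profile $\widetilde\Omega_i^k$. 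The corrector vanishes at the endpoints of each $\mathcal T^k$. The orbit-averaging computation (integration by parts in time along rational lines) gives $P_{\tau}\mathcal U=\operatorname{div}(\sum a_i^k\xi_i\otimes\xi_i+\sum G_i^k)$, so the low-frequency stress cancels up to $\|G_i^k\|_{L^1}\lesssim\lambda_{q+1}^{-1}\|a_i^k\|_{C^1}$.

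\emph{New stress.} Set $v_{q+1}=v_\ell+w+w_t$. The new stress splits into the following pieces:
\begin{enumerate}
\item[(a)] the $R_3$ errors of Proposition~\ref{prop:sqg-nonconstant-speed}, bounded by \eqref{eq:R3-L1-final} with $\|\nabla\log r\|\lesssim\lambda_q^{C}$;
\item[(b)] $G_i^k$, the source-replacement error, and the error from time-freezing $a_i$;
\item[(c)] the corrector terms $\mathcal N(u,w_t)+\mathcal N(\Lambda w_t,v)$;
\item[(d)] the linear terms $\mathcal N(u_\ell,w)+\mathcal N(U,v_\ell)$.
\end{enumerate}
In (d) we write everything in curl form $(\operatorname{curl}v)U^\perp$ and apply the null-form estimate with $p_0=2$. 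This replaces $\|DV\|$ by $\|\Lambda^{-1}\theta_w\|_{L^2}\sim\|w\|_{L^2}\sim r^{1/2}$, which is small. Each piece is then bounded in $L^1$ by $\delta_{q+2}$ after choosing $\mu,\kappa,\sigma,\beta$ in the order $\beta\ll1$, $\sigma$ large, and $\mu,\kappa$ adapted. The interactions between blocks vanish because of the disjoint time supports.

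\emph{Perturbation bounds.} The $L^4$ bound comes from \eqref{eq:Vp-Lq-final} with $q=4$. It gives $\|w\|_{L^4}\lesssim\eta\sim(\text{mean of }a)^{1/2}\lesssim\delta_{q+1}^{1/2}$, since $\eta_i^k$ is fixed by $\fint a_i^k$ and $\eta r^{-3/2}\cdot r^{3/2}$ balances. This yields \eqref{eq:ind-B} and, after telescoping, the bound on $\|v_{q+1}\|_{L^4}$. The $D_{t,x}$ bounds follow from \eqref{eq:dtVp-Lq-final}, with $n$ large relative to $\mu,\kappa$. The $C^\gamma_tL^{\bar p}$ bound on $Dw$ is the principal constraint. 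Using \eqref{eq:DVp-Lq-final} and \eqref{eq:dtDVp-Lq-final} with interpolation, we need
\[
\delta_{q+1}^{1/2-\frac23 s}r_{q+1}^{-s}\,\tau^{-\gamma}\cdot(\text{time-derivative loss})^\gamma\le\lambda_0\delta_{q+1}^{1/100},
\]
which forces $(\mu+\tfrac23\beta)s_{\bar p}+\gamma(\text{speed})<\tfrac{49}{100}\beta$. This holds because $s_{\bar p}\approx10^{-5}$ and $\gamma=10^{-5}$. The endpoint estimates \eqref{eq:est-endpoint} come from choosing $\zeta$ to vanish near $t=0,1$ and from $w_t=0$ at coarse endpoints. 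Then $u_{q+1}-u_q=u_\ell-u_q$ there, which is $\lesssim\ell\lambda_q^n\le\lambda_q^{-1}$.

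\emph{Main obstacle.} The hardest step is the parameter compatibility. The $C^\gamma_t$ estimate of $Dw$ involves block speeds of order $\eta r^{-3/2}$, and this must coexist with the linear-error and $R_3$ bounds, which demand small $r$. I would first fix the exponents $\mu,\kappa$ as multiples of $\beta$ and check every inequality with explicit numbers.
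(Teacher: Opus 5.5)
Your construction and list of stress components match the paper's proof essentially step by step. This covers the mollification, the four rational directions with $a_i\geq\delta_{q+1}$, and the radius encoding $(r_i^k)^{3/2}=r_{q+1}^{3/2}a_i^k$. It also covers orbit averaging with the fixed-width profile $\widetilde\Omega_i^k$, a time corrector vanishing on the coarse grid, the null form with $p_0=2$ for the linear error, and time interpolation for the $C^\gamma_t$ bound.

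The gap is the step you yourself call the main obstacle. For the explicit values $\bar p=\frac43+10^{-5}$, $\gamma=10^{-5}$ it is the real content of the proposition. You never close the parameter system, and the plan you sketch is mis-scaled. The exponents $\mu,\kappa$ do not shrink with $\beta$; they are bounded below by absolute constants:
\begin{itemize}
\item the support condition \eqref{eq:SQG-support-condition} forces $\mu>\frac1\alpha+\frac{8n}{3\sigma}+\frac23\beta>1$;
\item each block must run through $\gtrsim\lambda_{q+1}$ periods of a rational line of length $\sim\lambda_{q+1}$ inside one active subinterval, and this condition \eqref{est-period} forces $\frac32\mu>2+\kappa+\beta$;
\item the $L^4$ size of the corrector forces $\kappa\geq\frac32+\frac{6n}{\sigma}+\frac52\beta$.
\end{itemize}
Conversely, the auxiliary-average and source-replacement errors gain only $\lambda_{q+1}^{-1}$. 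Since $\lambda_{q+1}^{-1}\|a_i^k\|_{C^1}\lesssim\lambda_{q+1}^{-1+\frac{6n}{\sigma}+2\beta}$ must lie below $\delta_{q+2}\sim\lambda_{q+1}^{-\beta\sigma}$, you need $\beta\sigma<1-2\beta-\frac{6n}{\sigma}$. The $D_{t,x}$ bounds also need $n>\frac72\mu+\frac43\beta$. So the dependencies run from $\alpha,\mu,\kappa$ (all of size $O(1)$) to $n$, then to $\sigma>6n$, then to $\beta<1/\sigma$. Your order (``$\beta\ll1$, $\sigma$ large, and $\mu,\kappa$ adapted'') runs backwards and collides with $\beta\sigma<1$.

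Only once this hierarchy is in place is your $C^\gamma_t$ inequality a real constraint. Both $\mu s_{\bar p}+\gamma(1+\kappa)$ and $\mu s_{\bar p}+\frac52\mu\gamma$ must stay below $\frac{49}{100}\beta$, so $s_{\bar p}\lesssim\beta/\mu<1/(\sigma\mu)$. This is exactly why $\bar p-\frac43$ is tiny. The claim that it ``holds because $s_{\bar p}\approx10^{-5}$ and $\gamma=10^{-5}$'' has no force until $\beta$ and $\mu$ are fixed.

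The paper closes the system with the explicit choice \eqref{eq:admissible-parameters}: $n=14$, $\sigma=250$, $\kappa=3$, $\beta=10^{-3}$, $\alpha=\frac45$, $\mu=\frac{17}{5}$. For these values $\mu s_{\bar p}\approx3.8\cdot10^{-5}$, $\gamma(1+\kappa)=4\cdot10^{-5}$ and $\frac52\mu\gamma=8.5\cdot10^{-5}$ all sit well under $\frac{49}{100}\beta=4.9\cdot10^{-4}$. The paper then checks every listed inequality by direct substitution.

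Smaller points to repair:
\begin{itemize}
\item The mollification commutator is of size $\ell\lambda_q^{2n}$, not $\ell\lambda_q^{n}$. It uses $\|D_{t,x}\theta_q\|_{L^4}\lesssim\|D_{t,x}u_q\|_{L^4}$, i.e.\ the $u$-part of \eqref{eq:ind-A}. It only has to be $\lesssim\delta_{q+1}$, because it is absorbed into $R_\ell$ and cancelled.
\item You never treat the increment bound \eqref{eq:Dv-increment}. It needs $\|D(v_\ell-v_q)\|_{C^\gamma_tL^{\bar p}}\lesssim\ell^{1-\gamma}\lambda_q^{n}$, via the same second-derivative control.
\item You omit the $C^\gamma_tL^{\bar p}$ contribution of the corrector, which is the source of \eqref{eq:param-holder-3}.
\item Your endpoint argument requires $t=1$ to be a coarse grid point. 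The paper guarantees this by $\tau_{q+1}^{-1}=\lambda_{q+1}^{\kappa}\in\mathbb N$.
\end{itemize}
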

	
	\subsection{Proof of Theorem \ref{thm:introduction-flexibility}}
	We now give a complete proof of Theorem \ref{thm:introduction-flexibility} based on Proposition \ref{prop:iteration}.

		Fix mean-zero functions $\theta_{\mathrm{start}},\,\theta_{\mathrm{end}} \in L^{\bar{p}}(\mathbb{T}^2)$ and $\varepsilon>0$. Let $\rho_\ell$ be the standard smooth convolution kernel, and denote
		\[
		\theta_{\mathrm{start}}^\ell:=\theta_{\mathrm{start}}*\rho_{\ell},
		\qquad 
		\theta_{\mathrm{end}}^\ell:=\theta_{\mathrm{end}}*\rho_{\ell}.
		\]
		Then set $\ell$ small enough such that
		\begin{equation}\label{eq:theorem-smooth-approximation}
			\|\theta_{\mathrm{start}}^\ell
			-\theta_{\mathrm{start}}\|_{L^{\bar p}}
			+
			\|\theta_{\mathrm{end}}^\ell
			-\theta_{\mathrm{end}}\|_{L^{\bar p}}
			\leq \frac{\varepsilon}{2}.
		\end{equation}
			Define the associated smoothen potential velocities by
		\[
		v_{\mathrm{start}}^\ell
		:=
		\nabla^\perp(-\Delta_{\mathbb{T}^2})^{-1}
		\theta_{\mathrm{start}}^\ell,
		\qquad
		v_{\mathrm{end}}^\ell
		:=
		\nabla^\perp(-\Delta_{\mathbb{T}^2})^{-1}
		\theta_{\mathrm{end}}^\ell.
		\]
		We now set the $0$-step iteration values by
		\begin{equation}\label{def-0st-SQGR}
			\begin{aligned}
			&v_0(x,t):=
				\chi(t)v_{\mathrm{start}}^\ell (x)
				+
				\bigl(1-\chi(t)\bigr)
				v_{\mathrm{end}}^\ell  (x),\\
			&u_0(x,t)= \bigl(\Lambda_{\mathbb T^2} v_0\bigr)(x,t),\quad \theta_0(x,t)=-\operatorname{curl} v_0(x,t),\\
			&p_0(x,t):=0,\\
			&R_0(x,t):=\mathcal{R}_0
			\left(
			\partial_t v_0+u_0\cdot\nabla v_0-(\nabla v_0)^Tu_0
			\right)(x,t),
			\end{aligned}
		\end{equation} 
		where $\chi\in C^\infty([0,1];[0,1])$ is a smooth time cut-off such that $\chi(t)=1$ for $t\leq 1/4$, and  $\chi(t)=0$ for $t\geq 1/2$. Then $v_0$ and $u_0$ are smooth, divergence-free, and have zero spatial mean. The error $R_0$ is a well-defined symmetric tensor since 
		\[
		\int_{\mathbb T^2}N(u_0,v_0)dx=\int_{\mathbb T^2} \theta_0 \mathcal{R} \theta_0=0,
		\]
        by the skew-adjointness of the Riesz transforms.
		 Notice that both 
		\[
		\delta_0
		=
		\lambda_1^{2\beta}\lambda_0^{-\beta},
		\qquad
		\delta_1
		=
		\lambda_1^\beta,
		\]
		tend to infinity as
		$\lambda_0\to\infty$. Thus, after increasing $\lambda_0$ if necessary, we may arrange that
		\[
		\|R_0\|_{L^\infty_tL^1_x}\leq\delta_1,
		\qquad
		\|v_0\|_{L^\infty_tL^4_x}\leq\delta_0^{1/2},
        \qquad 4C\lambda_0^{-1}\leq \varepsilon,
		\]
		and
		\[
		\|D_{t,x}v_0\|_{L^\infty_tL^4_x}
		+
		\|D_{t,x}u_0\|_{L^\infty_tL^4_x}
		\leq\lambda_0^{n},
		\]
        where $C$ denotes the constant in the $L^{\bar{p}}\to L^{\bar{p}}$ estimate for the Riesz transform.
		Hence Proposition~\ref{prop:iteration} can be applied starting from
		$q=0$, and it produces a sequence
		\[
		(v_q,u_q,p_q,R_q)_{q\geq0}
		\]
		of solutions to the SQG--Reynolds system.
		
		The estimates \eqref{eq:ind-B} and \eqref{eq:Dv-increment} 
        show that 
        \begin{align*}
            \sum_{q\geq k} \|v_{q+1}-v_q\|_{L^\infty_t L^4_x} 
            &\leq M\sum_{q\geq k} \delta_{q+1}^{\frac12} <\infty,\\
           \sum_{q\geq k} \|D v_{q+1}-D v_q\|_{C^\gamma_t L^{\bar{p}}_x} 
           &\leq 2\lambda_0 \sum_{q\geq k}\delta_{q+1}^{\frac{1}{100}}<\infty,
        \end{align*}
        which imply that $(v_q)_{q\in\mathbb{N}}$ is a Cauchy sequence and converges in $C_t(L^4_x \cap W_x^{1,\bar{p}})$ to the limit
		\[
		v(x,t)=v_0(x,t)+ \sum_{q=0}^{\infty} (v_{q+1}(x,t)-v_q(x,t))\in C_t(L^4_x \cap W_x^{1,\bar{p}}).
		\]
        Since there exists a uniform constant $C>0$ such that the following estimates holds for all $q\geq0$ 
        \[
           \|u_{q+1}-u_q\|_{C_t^\gamma L^{\bar{p}}} 
           + \|\theta_{q+1}-\theta_q\|_{C_t^\gamma L^{\bar{p}}}
           \leq
           C \|Dv_{q+1}-D v_q\|_{C_t^\gamma L_x^{\bar p}},
        \]
        where $\theta_q:=-\operatorname{curl}v_q $
        we also obtain its associated active scalar and transport velocity:
        \begin{align}
		\theta(x,t)&=\theta_0(x,t)+ \sum_{q=0}^{\infty} (\theta_{q+1}(x,t)-\theta_q(x,t))\in C_t L^{\bar{p}},\label{eq:theta-sum}\\
        u(x,t)& =u_0(x,t)+ \sum_{q=0}^{\infty} (u_{q+1}(x,t)-u_q(x,t))\in C_t L^{\bar{p}}.\notag
        \end{align}
    Therefore, by \eqref{eq:est-endpoint} and \eqref{eq:theta-sum},
    \[
        \begin{aligned}
            \|\theta(\cdot,0)- \theta_{\mathrm{start}}^\ell\|_{L^{\bar{p}}}
            &=
            \|\theta(\cdot,0)- \theta_0(\cdot,0))\|_{L^{\bar{p}}}\\
            &\le 
            \sum_{q=0}^{\infty} \|\theta_{q+1}(\cdot,0)- \theta_{q}(\cdot,0)\|_{L^{\bar{p}}}\\
            &\le 
            \sum_{q=0}^{\infty}C \|u_{q+1}(\cdot,0)- u_{q}(\cdot,0)\|_{L^{\bar{p}}}\\
             &\le 
            \sum_{q=0}^{\infty}C \|u_{q+1}(\cdot,0)- u_{q}(\cdot,0)\|_{L^4}\\
            &\le
             C\sum_{q=0}^{\infty} \lambda_{q}^{-1} \leq 2C\lambda_0^{-1}\leq \frac{\varepsilon}{2}.
        \end{aligned}
    \]
    In view of \eqref{eq:theorem-smooth-approximation}, we conclude that
    \[
    \|\theta(\cdot,0)-\theta_{\mathrm{start}}\|_{L^{\bar p}}
    \leq\varepsilon.
    \]
    Similarly,
    \[
    \|\theta(\cdot,1)-\theta_{\mathrm{end}}\|_{L^{\bar p}}
    \leq\varepsilon.
    \]
	   We now pass the limit for the momentum weak formulation.
		Since $\bar p>4/3$, the Sobolev embedding 
		$
		W^{1,\bar p}(\mathbb T^2)
		\hookrightarrow
		H^{1/2}(\mathbb T^2)
		$
	   gives
		\begin{equation}\label{eq:theorem-Hhalf-convergence}
			v_q\longrightarrow v
			\quad\text{in}\quad
			C([0,1];H^{1/2}(\mathbb T^2)).
		\end{equation}
		For smooth and divergence-free test function $\phi$,
		we have
		\begin{align}\label{eq:theorem-approximate-weak-form}
			\Bigg|
			\int_0^1
			\langle (v_q)_i,\partial_t\phi_i\rangle
			+
			\langle\Lambda(v_q)_j,(v_q)_i\partial_j\phi_i\rangle
			-
			\frac12
			\langle\partial_i(v_q)_j,
			[\Lambda,\phi_i](v_q)_j\rangle
			\,dt
			\Bigg|
			\leq
			\|R_q\|_{L^\infty_tL^1_x}
			\|\nabla\phi\|_{L^1_tL^\infty_x},
		\end{align}
		and the right-hand side tends to zero since $\|R_q\|_{L^\infty_tL^1_x}
		\leq\delta_{q+1}
		\to 0$.
		Using \eqref{eq:theorem-Hhalf-convergence}, we can pass to
		the limit as $q\to\infty$ in
		\eqref{eq:theorem-approximate-weak-form}, and consequently
		\[
		\int_0^1
		\Big(
		\langle v_i,\partial_t\phi_i\rangle
		+
		\langle\Lambda v_j,v_i\partial_j\phi_i\rangle
		-
		\frac12
		\langle\partial_i v_j,[\Lambda,\phi_i]v_j\rangle
		\Big)\,dt
		=0.
		\]
		Thus $\theta=-\operatorname{curl}v$ is a weak solution of SQG in the
		momentum sense of
		\eqref{eq:weak-momentum-introduction}.

	\subsection{Proof of Theorem \ref{thm:SQG-nonuniqueness}}
	Fix an arbitrary mean-zero scalar
	\[
		\theta_{\mathrm{start}}
		\in L^{\bar p}(\mathbb T^2)
	\]
	and an arbitrary \(\varepsilon>0\). Choose two mean-zero scalars
	\[
		\theta_{\mathrm{end},1},\theta_{\mathrm{end},2}
		\in L^{\bar p}(\mathbb T^2)
	\]
	such that
	\[
		\|\theta_{\mathrm{end},1}
		-\theta_{\mathrm{end},2}\|_{L^{\bar p}}
		>2\varepsilon.
	\]
	Choose one data-mollification scale \(\ell>0\), common to both
	constructions, sufficiently small that
	\[
		\|\theta_{\mathrm{start}}^\ell-\theta_{\mathrm{start}}\|_{L^{\bar p}}
		+
		\sum_{j=1}^2
		\|\theta_{\mathrm{end},j}^\ell
		-\theta_{\mathrm{end},j}\|_{L^{\bar p}}
		\leq \frac{\varepsilon}{2}.
	\]
	For each \(j\in\{1,2\}\), construct a zeroth-level solution
	\[
		(v_0^{(j)},u_0^{(j)},p_0^{(j)},R_0^{(j)})
	\]
	as in \eqref{def-0st-SQGR}, using the same cutoff \(\chi\) and the common
	scale \(\ell\) to mollify the initial target
	\(\theta_{\mathrm{start}}\) and the respective terminal target
	\(\theta_{\mathrm{end},j}\).
	Fix the admissible parameters from \eqref{eq:admissible-parameters} and
	choose one integer \(\lambda_0\), common to both constructions, sufficiently
	large that Proposition~\ref{prop:iteration} applies to both zeroth-level
	solutions and
	\[
		4C\lambda_0^{-1}\leq\varepsilon,
		\qquad
		4\lambda_0^{-1}\leq\frac1{100},
	\]
	where \(C\) is the constant in the endpoint estimate used in the proof of
	Theorem~\ref{thm:introduction-flexibility}.

By the explicit definition in \eqref{def-0st-SQGR}, the two zeroth-level
solutions coincide on \([0,1/4]\). Therefore, by the time-locality
property in Remark~\ref{rem:SQG-time-locality}, the two associated
solutions at level \(q+1\) coincide on the time interval
\[
\Big[
0,\frac14-2\sum_{q'=0}^{q}\lambda_{q'}^{-1}
\Big], \quad\text{with} \quad 2\sum_{q'=0}^{q}\lambda_{q'}^{-1}
\leq 4\lambda_0^{-1}
\leq \frac1{100}.
\]
Consequently, the two limiting solutions, denoted by
\(\widetilde\theta^{(1)}\) and \(\widetilde\theta^{(2)}\), coincide on a
nontrivial time interval containing \(t=0\). In particular,
\[
\widetilde\theta^{(1)}(\cdot,0)
=
\widetilde\theta^{(2)}(\cdot,0)
=:\widetilde\theta_{\mathrm{start}},
\]
where
\[
\|\widetilde\theta_{\mathrm{start}}
-\theta_{\mathrm{start}}\|_{L^{\bar p}}
\leq \varepsilon.
\]

On the other hand, the endpoint estimates yield
\[
\|\widetilde\theta^{(j)}(\cdot,1)
-\theta_{\mathrm{end},j}\|_{L^{\bar p}}
\leq \varepsilon,
\qquad j=1,2.
\]
Hence, by the triangle inequality,
\[
\begin{aligned}
\|\widetilde\theta^{(1)}(\cdot,1)
-\widetilde\theta^{(2)}(\cdot,1)\|_{L^{\bar p}}
&\geq
\|\theta_{\mathrm{end},1}
-\theta_{\mathrm{end},2}\|_{L^{\bar p}}
\\
&\quad
-\|\widetilde\theta^{(1)}(\cdot,1)
-\theta_{\mathrm{end},1}\|_{L^{\bar p}}
-\|\widetilde\theta^{(2)}(\cdot,1)
-\theta_{\mathrm{end},2}\|_{L^{\bar p}}
\\
&>
2\varepsilon-\varepsilon-\varepsilon
=0.
\end{aligned}
\]
Therefore,
$
\widetilde\theta^{(1)}(\cdot,1)
\neq
\widetilde\theta^{(2)}(\cdot,1).
$
Consequently, the two limiting solutions are distinct.

Let \(\mathcal D\) denote the set of all mean-zero initial data in
\(L^{\bar p}(\mathbb T^2)\) that admit at least two distinct momentum weak
solutions in \(C([0,1];L^{\bar p}(\mathbb T^2))\). The construction above
shows that, for every mean-zero
\(\theta_{\mathrm{start}}\in L^{\bar p}(\mathbb T^2)\) and every
\(\varepsilon>0\), there exists
\(\widetilde\theta_{\mathrm{start}}\in\mathcal D\) such that
\[
\|\widetilde\theta_{\mathrm{start}}
-\theta_{\mathrm{start}}\|_{L^{\bar p}}
\leq\varepsilon.
\]
Therefore, \(\mathcal D\) is dense in the mean-zero subspace of
\(L^{\bar p}(\mathbb T^2)\), which proves the theorem.
	
	\subsection{Mollification}
	\leavevmode\par\noindent
	We begin the construction by smoothing the current approximate solution,
	thereby obtaining regular coefficients for the geometric decomposition and
	the trajectories. The nonlinear commutator generated by this operation is
	incorporated into the mollified Reynolds stress.
	
	We work with the SQG--Reynolds system in momentum form
	\begin{equation}\label{eq:SQG-q}
		\partial_t v_q
		+
		u_q\cdot\nabla v_q
		-
		(\nabla v_q)^T u_q
		+
		\nabla p_q
		=
		\operatorname{div} R_q,
	\end{equation}
	with
	\begin{equation*}
		\operatorname{div}v_q=0,
		\qquad
		u_q=\Lambda \mathbb P_{\neq 0} v_q=\mathcal R^\perp\theta_q
		=
		(-\mathcal R_2\theta_q,\mathcal R_1\theta_q)
	\end{equation*}
	
	Note that \(v_q\) may have nonzero spatial average, while \(u_q\) is determined
	only by the mean-zero part of \(v_q\). Thus the notation \(u_q=\Lambda v_q\)
	should always be understood as \(u_q=\Lambda\mathbb P_{\neq0}v_q\).
	
	Define the SQG momentum nonlinearity by
	\begin{equation*}
		N(u,v):=
		u\cdot\nabla v-(\nabla v)^T u=(\operatorname{curl} v)u^{\perp}.
	\end{equation*}
	Then \eqref{eq:SQG-q} can be written as
	\begin{equation}\label{eq:SQG-q-rewrite}
		\partial_t v_q+ N(u_q,v_q)+\nabla p_q
		=
		\operatorname{div} R_q.
	\end{equation}
	
	Let \(\varphi_\ell\) be a standard space--time mollifier at scale
	\(\ell\). After fixing a temporal extension when needed, define
	\begin{equation*}
		v_\ell:=v_q*\varphi_\ell,
		\qquad
		p_\ell:=p_q*\varphi_\ell,
		\qquad
		\theta_\ell:=-\operatorname{curl}v_\ell=\theta_q*\varphi_\ell,
		\qquad
		u_\ell:=u_q*\varphi_\ell=\mathcal R^\perp\theta_\ell.
	\end{equation*}
	Convolving \eqref{eq:SQG-q-rewrite} gives
	\begin{equation*}
		\partial_t v_\ell+
		\big( N(u_q,v_q)\big)_\ell
		+
		\nabla p_\ell
		=
		\operatorname{div}( R_q)_\ell.
	\end{equation*}
	We add and subtract \( N(u_\ell,v_\ell)\). Set
	\begin{equation*}
		\mathcal C_\ell
		:=
		N(u_\ell,v_\ell)
		-
		\big( N(u_q,v_q)\big)_\ell
		= (\operatorname{curl} v_\ell)u_{\ell}^{\perp}-((\operatorname{curl} v_q)u_q^{\perp})_\ell.
	\end{equation*}
	Then
	\begin{equation*}
		\partial_t v_\ell+
		N(u_\ell,v_\ell)
		+
		\nabla p_\ell
		=
		\operatorname{div}( R_q)_\ell
		+
		\mathcal C_\ell.
	\end{equation*}
	By the skew-adjointness of the Riesz transforms and the properties
	of mollification,
	\[
	\int_{\mathbb T^2}\mathcal C_{\ell,i}\,dx
	=
	\int_{\mathbb T^2}\theta_\ell \mathcal R_i\theta_\ell\,dx
	-
	\int_{\mathbb T^2}(\theta_q \mathcal R_i\theta_q)_\ell\,dx
	=
	-\int_{\mathbb R}\varphi_\ell^t(t-s)
	\int_{\mathbb T^2}
	\theta_q(y,s)\mathcal R_i\theta_q(y,s)\,dy\,ds
	=0.
	\]
	
	We now define the mollified Reynolds stress by
	\begin{equation}\label{def-Rl}
		R_\ell
		:=
		(R_q)_\ell
		+
		\mathcal R_0\mathcal C_\ell.
	\end{equation}
	Then we obtain
	\begin{equation}\label{eq:vl}
		\partial_t v_\ell
		+
		u_\ell\cdot\nabla v_\ell
		-
		(\nabla v_\ell)^T u_\ell
		+
		\nabla p_\ell
		=
		\operatorname{div} R_\ell.
	\end{equation}
	Moreover,
	\begin{equation*}
		\operatorname{div}v_\ell=0,
		\qquad
		u_\ell=\mathcal R^\perp\theta_\ell
		=
		\Lambda\mathbb P_{\neq0}v_\ell.
	\end{equation*}
	Thus \((v_\ell,u_\ell,p_\ell, R_\ell)\) solves the SQG-Reynolds system.
	
	\subsubsection{Mollification estimates}
	We first estimate the Reynolds stress $R_\ell$. By the decomposition
	\[
	\begin{aligned}
		\mathcal C_\ell
		&=
		(\theta_\ell-\theta_q)\mathcal R\theta_\ell
		+\theta_q\mathcal R(\theta_\ell-\theta_q)
		+\theta_q\mathcal R\theta_q-(\theta_q\mathcal R\theta_q)_\ell,
	\end{aligned}
	\]
	we have
	\begin{equation}\label{RlL1}
		\begin{aligned}
			\|R_\ell\|_{L^\infty_tL^1_x}
			\leq& \|(R_q)_\ell\|_{L^\infty_tL^1_x}
			+ \|(\theta_\ell-\theta_q)\mathcal R\theta_\ell\|_{L^\infty_tL^2_x} \\
			&+ \|\theta_q \mathcal R(\theta_\ell-\theta_q)\|_{L^\infty_tL^2_x}
			+ \|\theta_q\mathcal R\theta_q-(\theta_q\mathcal R\theta_q)_\ell\|_{L^\infty_tL^2_x}\\
			\leq& \|R_q\|_{L^\infty_tL^1_x}
			+\|\theta_\ell-\theta_q\|_{L^\infty_tL^4_x}
			\|\mathcal R\theta_\ell\|_{L^\infty_tL^4_x}\\
			&+\|\theta_q\|_{L^\infty_tL^4_x}
			\|\mathcal R(\theta_\ell-\theta_q)\|_{L^\infty_tL^4_x}
			+C\ell
			\|D_{t,x}(\theta_q\mathcal R\theta_q)\|_{L^\infty_tL^{2}_x}\\
			\leq& \delta_{q+1}+ C \ell \|D_{t,x}\theta_q\|_{L^\infty_tL^4_x}
			\|\theta_q\|_{L^\infty_tL^4_x}\\
			\leq& \delta_{q+1}+ C\ell \lambda_q^{2n}\\
			\leq& 2\delta_{q+1},
		\end{aligned}
	\end{equation}
	provided
	\begin{equation}\label{def:ell}
	\ell=\lambda_0^{-\beta}\lambda_q^{-2n}\delta_{q+1}
	\end{equation}
	with $\lambda_0\geq \lambda_0(C)$ sufficiently large.

	For any $\rho>2$, by the Sobolev embedding $W^{1,\rho}(\mathbb T^2)\hookrightarrow C^0(\mathbb T^2)$ and the $L^\rho$-boundedness of the anti-divergence operator on mean-zero functions, we have
	\begin{equation}\label{RlC0}
		\begin{aligned}
			\|R_\ell\|_{C^0_{t,x}}
			&\le \|(R_q)_\ell\|_{C^0_{t,x}}
			+\|\mathcal R_0 \mathcal C_\ell\|_{C^0_{t,x}}  \\
			&\leq
			\ell^{-2}\|R_q\|_{L^\infty_tL^1_x}
			+C_{\rho}\|\mathcal C_\ell\|_{L^\infty_tL^\rho_x}                                      \\
			&\leq
			\ell^{-2}\delta_{q+1}
			+C_{\rho}\|\theta_\ell \mathcal R\theta_\ell\|_{L^\infty_tL^\rho_x}
			+C_{\rho}\|(\theta_q \mathcal R\theta_q)_\ell\|_{L^\infty_tL^\rho_x}                    \\
			&\leq
			\ell^{-2}\delta_{q+1}
			+C_{\rho}\ell^{-1+\frac{2}{\rho}}
			\|\theta_q\|_{L^\infty_tL^4_x}
			\|u_q\|_{L^\infty_tL^4_x}                                          \\
			&\leq
			\ell^{-2}\delta_{q+1}
			+C_{\rho} \ell^{-1+\frac{2}{\rho}}\lambda_q^{2n}\\
			&\leq
			\lambda_0^{2\beta}\lambda_1^{-2\beta}\lambda_q^{4n+\beta\sigma}
			+
			C_{\rho} (\lambda_0^{-\beta}\lambda_q^{-2n}\delta_{q+1})^{-1+\frac{2}{\rho}}\lambda_q^{2n}  \\
			&\le
			\lambda_0^{2\beta}\lambda_1^{-2\beta}\lambda_q^{4n+\beta\sigma}
			+C_{\rho}(\lambda_0^{\beta}\lambda_1^{-2\beta})^{1-\frac{2}{\rho}} \lambda_q^{(2n+\beta\sigma)(1-\frac{2}{\rho})+2n} \\
			&\le \lambda_q^{4n+\beta\sigma},
		\end{aligned}
	\end{equation}
	where in the last inequality we first fix $\rho=3$ and then choose $\lambda_0\geq \lambda_0(C_{\rho=3})\geq \lambda_0 (C)$ sufficiently large.

	Similarly, with this choice of $\lambda_0$, for the $C^1_{t,x}$ norm, we have
	\begin{equation}\label{RlC1}
		\begin{aligned}
			\|R_\ell\|_{C^1_{t,x}}
			&\le \|(R_q)_\ell\|_{C^1_{t,x}}
			+\|\mathcal R_0 \mathcal C_\ell\|_{C^1_{t,x}}  \\
			&\leq
			\ell^{-3}\|R_q\|_{L^\infty_tL^1_x}
			+C_{\rho}\|D_{t,x}\mathcal{R}_0 \mathcal C_\ell\|_{L^\infty_t W^{1,\rho}_x}                                      \\
			&\leq 
			\ell^{-3}\delta_{q+1}
			+C_{\rho} \|D_x \mathcal C_\ell\|_{L^\infty_t L^\rho_x}
			+C_{\rho} \|D_t \mathcal C_\ell\|_{L^\infty_t L^\rho_x}\\
			&\leq
			\ell^{-3}\delta_{q+1}
			+C_{\rho} \ell^{-2+\frac{2}{\rho}}
			\|\theta_q\|_{L^\infty_tL^4_x}
			\|u_q\|_{L^\infty_tL^4_x}  \\
			&\leq C \lambda_q^{6n+2\beta\sigma}.
		\end{aligned}
	\end{equation}

	Finally, for the mollified approximate solution itself
	\begin{equation}\label{eq:u-mollification-Du}
		\begin{aligned}
			\|v_\ell-v_q\|_{L_t^\infty L_x^4}
			+
			\|u_\ell-u_q\|_{L_t^\infty L_x^4}
			&\leq
			\ell\|D_{t,x}v_q\|_{L_t^\infty L_x^4}
			+
			\ell\|D_{t,x}u_q\|_{L_t^\infty L_x^4}\\
			&\leq \lambda_0^{-\beta} \lambda_q^{-n} \delta_{q+1}\\
			&= \lambda_0^{\beta\sigma-\beta} \lambda_q^{-n-\frac12 \beta\sigma}\delta_{q+1}^{\frac12},
		\end{aligned}
	\end{equation}
	where the coefficient $\lambda_0^{\beta\sigma-\beta} \lambda_q^{-n-\frac12 \beta\sigma}<1$  for all $q\geq 0$ by choosing
	\begin{equation*}
		n\geq \frac12\beta\sigma.
	\end{equation*} 
	
	\subsection{Geometric error decomposition}
	To match the mollified stress with blocks moving in prescribed directions,
	we decompose it into positive rank-one components associated with rational
	directions on the torus.
	
	The following decomposition is adapted from~\cite{BCK26}.
	\begin{lemma}[Error decomposition]\label{lem:error-decomposition}
		Given $\lambda_{q+1} \ge 8$, $\delta_{q+1} > 0$, there exist unit vectors
		$\xi_1, \xi_2, \xi_3, \xi_4$ in $\mathbb{R}^2$ with rational components
		such that the following holds.
		For every
		\[
		R_\ell \in C^\infty(\mathbb{T}^2 \times [0,1]; \mathrm{Sym}_2),
		\]
		\begin{enumerate}
			\item[(i)] The period of the closed geodesic $s \mapsto s \xi_i$ in $\mathbb{T}^2$ is
			$c_i \lambda_{q+1}$ for $c_i \in [1,3]$,
			
			\item[(ii)] The following decomposition holds:
			\begin{equation}\label{eq:error-decomposition}
				-\operatorname{div}(R_\ell)
				=
				\operatorname{div}\left(
				\sum_{i=1}^{4} a_i(x,t)\xi_i \otimes \xi_i
				\right)
				+ \nabla P^d ,
			\end{equation}
			
			\item[(iii)] The functions $a_i(x,t)$ are smooth and satisfy
			\begin{equation*}
				\begin{aligned}
					&a_i(x,t) \ge \delta_{q+1}, \qquad
					\|a_i\|_{L_t^\infty L_x^1} \le 32(\delta_{q+1}+\|R_\ell\|_{L_t^\infty L_x^1}), \\
					&\|a_i\|_{C_{x,t}^0} \le C(\delta_{q+1}+\|R_\ell\|_{C_{x,t}^0}), \qquad
				\|a_i\|_{C_{x,t}^1} \le
				C\bigl(\delta_{q+1}+\|R_\ell\|_{C_{x,t}^1}\bigr).
				\end{aligned}
			\end{equation*}
		\end{enumerate}
	\end{lemma}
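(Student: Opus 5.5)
The plan is the standard geometric-lemma argument, adapted to the $L^1$ setting of the stress. We do not look for a pointwise algebraic decomposition of $R_\ell$ itself. Instead we first shift $-R_\ell$ by a large multiple of the identity, which only changes the pressure, and then decompose the shifted matrix pointwise along four fixed rational directions with positive weights.

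\textbf{Step 1: choice of directions.} Pick four rational unit vectors $\xi_1,\dots,\xi_4$ such that $\xi_i\otimes\xi_i$ span $\mathrm{Sym}_2$ and the identity has a representation $\mathrm{Id}=\sum_i \gamma_i\,\xi_i\otimes\xi_i$ with all $\gamma_i>0$ bounded below. A natural choice is to take Pythagorean-triple vectors $\xi=(a,b)/c$ close to $e_1,e_2,(e_1\pm e_2)/\sqrt2$, with $c$ comparable to $\lambda_{q+1}$. The closed geodesic $s\mapsto s\xi$ on $\mathbb T^2$ has period equal to $c$ (up to a factor from $\gcd$ issues, which we handle by using primitive triples). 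Choosing $c\in[\lambda_{q+1},3\lambda_{q+1}]$ gives (i); the existence of such primitive triples with the direction in a prescribed small arc is elementary via the rational parametrization $((m^2-n^2),2mn)/(m^2+n^2)$. Since the directions lie in fixed small neighborhoods of the model directions, linear algebra gives the following. There is a fixed neighborhood $\mathcal N$ of $\mathrm{Id}$ in $\mathrm{Sym}_2$ and linear maps $\Gamma_i$ with $A=\sum_i\Gamma_i(A)\,\xi_i\otimes\xi_i$ for $A\in\mathrm{Sym}_2$. These maps satisfy $\Gamma_i(\mathrm{Id})\ge c_0>0$, $|\Gamma_i(A)|\le C|A|$, and $|\Gamma_i(A)-\Gamma_i(\mathrm{Id})|\le c_0/2$ on $\mathcal N$, with constants uniform in $\lambda_{q+1}$. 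Four directions give an overdetermined system, so we fix the extra degree of freedom once and for all, for instance by keeping $\xi_4$'s coefficient tied to the identity part.

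\textbf{Step 2: pointwise decomposition.} Set $\rho(x,t):=4\delta_{q+1}/c_0+K\,|R_\ell(x,t)|$, with $K$ large and $|R_\ell|$ replaced by a smooth version such as $\sqrt{\delta_{q+1}^2+|R_\ell|^2}$ to keep smoothness. Then $-R_\ell/\rho\in\mathcal N$ up to the identity shift. Write
\[
-R_\ell=\rho\,\mathrm{Id}-\rho\Bigl(\mathrm{Id}+\tfrac{R_\ell}{\rho}\Bigr)+\dots ,
\]
or more simply define
\[
a_i:=\rho\,\Gamma_i\Bigl(\mathrm{Id}-\tfrac{R_\ell}{\rho}\Bigr),
\]
so that $\sum_i a_i\xi_i\otimes\xi_i=\rho\,\mathrm{Id}-R_\ell$. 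Taking divergence yields \eqref{eq:error-decomposition} with $P^d:=-\rho$, since $\operatorname{div}(\rho\,\mathrm{Id})=\nabla\rho$. Positivity follows from $a_i\ge\rho\,c_0/2\ge\delta_{q+1}$ once $K\ge 2/\mathrm{dist}(\mathrm{Id},\partial\mathcal N)$.

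\textbf{Step 3: bounds.} The identity $a_i\le C\rho+C|R_\ell|$ gives the $L^1$ and $C^0$ bounds. The constant $32$ comes from tracking $c_0,C,K$ for the specific near-axis directions; we would choose the arcs small enough that these constants are explicit. The $C^1$ bound follows from the chain rule: $\Gamma_i$ is linear, $\rho\,\Gamma_i(\mathrm{Id}-R_\ell/\rho)=\rho\Gamma_i(\mathrm{Id})-\Gamma_i(R_\ell)$ is actually affine in $(\rho,R_\ell)$, and $|D\rho|\le K|DR_\ell|$. Hence $\|a_i\|_{C^1}\le C(\delta_{q+1}+\|R_\ell\|_{C^1})$ with no loss.

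\textbf{Main obstacle.} The main obstacle is Step 1: producing rational directions whose geodesic periods lie in $[\lambda_{q+1},3\lambda_{q+1}]$ while staying in fixed small arcs, so that the constants $c_0$ and $C$ are uniform in $\lambda_{q+1}$. We would settle this by a density argument for primitive Pythagorean triples with hypotenuse in a dyadic-type window. If that counting becomes delicate, we would fall back on the parametrization with $m,n$ chosen so that $m^2+n^2$ lands in $[\lambda_{q+1},3\lambda_{q+1}]$ and $n/m$ approximates the required slope. The hypothesis $\lambda_{q+1}\ge8$ supplies enough room for this.
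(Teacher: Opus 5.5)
The paper does not prove this lemma; it imports it from \cite{BCK26}, so your argument has to stand on its own. The analytic half is correct. Take any fixed linear right inverse $\Gamma$ of $a\mapsto\sum_i a_i\,\xi_i\otimes\xi_i$ with $\Gamma_i(\mathrm{Id})>0$. (The system is underdetermined, not overdetermined: there are four unknowns and $\dim\mathrm{Sym}_2=3$.) Set $a_i=\rho\,\Gamma_i(\mathrm{Id})-\Gamma_i(R_\ell)$, with $\rho$ a suitable combination of $\delta_{q+1}$ and $\sqrt{\delta_{q+1}^2+|R_\ell|^2}$. This gives the decomposition (ii) with $P^d=-\rho$, the lower bound $a_i\ge\delta_{q+1}$, and the bounds in (iii). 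The $C^1$ bound is immediate because $a_i$ is affine in $(\rho,R_\ell)$ and $|D\rho|\lesssim|DR_\ell|$. The explicit constant $32$ is only asserted, though. It does come out with room to spare once $\Gamma$ is uniformly bounded, but you need to compute it.

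The step that fails as written is Step~1, on the range of $\lambda_{q+1}$ the lemma covers. Rational directions in fixed small arcs around $e_1$, $e_2$, $(e_1\pm e_2)/\sqrt2$ with primitive hypotenuse in $[\lambda_{q+1},3\lambda_{q+1}]$ exist only once $\lambda_{q+1}$ exceeds a threshold depending on the arc width; that is what your lattice-point count gives. Your assertion that $\lambda_{q+1}\ge8$ ``supplies enough room'' is false. For $\lambda_{q+1}=8$ the only primitive hypotenuses in $[8,24]$ are $13$ and $17$. So, up to signs and swapping coordinates, the only admissible directions are $(12,5)/13$ and $(15,8)/17$. These make an angle of at least $22.6^\circ$ with both axes and at least $16.9^\circ$ with both diagonals; in particular none lies within $22.5^\circ$ of $e_1$. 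Because $32$ is explicit, you also cannot dismiss a bounded initial range by taking ``any spanning configuration''.

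A repair valid for all $\lambda_{q+1}\ge8$ is to use a single primitive triple $(a,b,c)$ with $c\in[\lambda_{q+1},3\lambda_{q+1}]$ and $\phi=\arctan(b/a)\in[10^\circ,35^\circ]$ (swap $a,b$ if necessary). Take $\xi_{1,2}=(a,\pm b)/c$ and $\xi_{3,4}=(\pm b,a)/c$. All four directions have period exactly $c$, and $\sum_i\xi_i\otimes\xi_i=2\,\mathrm{Id}$. Moreover $\xi_1\otimes\xi_1-\xi_2\otimes\xi_2=\sin2\phi\,(e_1\otimes e_2+e_2\otimes e_1)$ and $\xi_1\otimes\xi_1+\xi_2\otimes\xi_2-\xi_3\otimes\xi_3-\xi_4\otimes\xi_4=2\cos2\phi\,(e_1\otimes e_1-e_2\otimes e_2)$. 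Hence you may take $\Gamma_i(\mathrm{Id})=\tfrac12$ with $|\Gamma_i|$ controlled only by $1/\sin2\phi$ and $1/\cos2\phi$, which makes $32$ easy to verify.

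The hypotenuses $13,25,53$ (angles $22.6^\circ,16.3^\circ,31.9^\circ$) already cover $8\le\lambda_{q+1}\le53$. A finite check handles the range up to your counting threshold, and the count handles the rest. For the paper's purposes the defect is harmless: the lemma is only invoked with $\lambda_{q+1}\ge\lambda_0^\sigma$ enormous, where your Step~1 works.
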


	\subsection{Time partition and averaging}
	\leavevmode\par\noindent
	We now freeze the rank-one coefficients on short coarse intervals and
	assign a disjoint active subinterval to each direction. This separation
	prevents interactions between distinct principal blocks and prepares the
	orbit-averaging argument.
	
	We partition $[0,+\infty)$ into time intervals of length $\tau_{q+1}$. We define
	\begin{equation*}
		\mathcal T^k := [k\tau_{q+1},(k+1)\tau_{q+1}) .
	\end{equation*}
	
	Each interval $\mathcal T^k$ is further divided into four subintervals
	of equal length:
	\begin{equation*}
		\mathcal T_i^k :=
		\left[
		\tau_{q+1}\left(k+\frac{i-1}{4}\right),
		\tau_{q+1}\left(k+\frac{i}{4}\right)
		\right),
		\qquad i=1,2,3,4,\quad k\in\mathbb N .
	\end{equation*}
	We also define the shortened interval
	\begin{equation*}
		\bar{\mathcal T}_i^k :=
		\left[
		\tau_{q+1}\left(k+\frac{i-1}{4}+\frac{1}{\lambda_{q+1}}\right),
		\tau_{q+1}\left(k+\frac{i}{4}-\frac{1}{\lambda_{q+1}}\right)
		\right),
		\qquad i=1,2,3,4,\quad k\in\mathbb N .
	\end{equation*}
	
	\begin{definition}[Time-average operator]
		Given a scalar-, vector-, or tensor-valued field $g$ on
		$\mathbb T^2\times\mathbb R_+$, define the following operator componentwise:
		\begin{equation*}
		(P_\tau g)(x,t)
			:=
			\frac{1}{|\mathcal T^k|}\int_{\mathcal T^k} g(x,s)\,ds,
			\qquad t\in \mathcal T^k .
		\end{equation*}
	\end{definition}
	Given $R_\ell$ as in \eqref{def-Rl}, we define $a_i(x,t)$ from Lemma~\ref{lem:error-decomposition}. Next, using the definition of $P_\tau$ above, we introduce the shorthand notation
	\begin{equation*}
		a_i^k(x)
		:=
		P_{\tau_{q+1}}a_i(x,t)
		=
		\frac{1}{|\mathcal T^k|}\int_{\mathcal T^k} a_i(x,t)\,dt.
	\end{equation*}
	From Lemma~\ref{lem:error-decomposition}, \eqref{RlL1}, \eqref{RlC0} and \eqref{RlC1}, we conclude
	\begin{equation}\label{est-aik}
		\begin{aligned}
			&a_i^k(x)\geq \delta_{q+1},
			\qquad
			\|a_i^k\|_{L_x^1}\leq 96\delta_{q+1},
			\\
			&\|a_i^k\|_{C_x^0}\leq C \lambda_q^{4n+\beta\sigma},\qquad	\|a_i^k\|_{C_x^1}\leq C \lambda_q^{6n+2\beta\sigma}.
		\end{aligned}
	\end{equation}
	
	Also, note that
	\begin{equation}\label{est-time-error-a}
		\sup_{t\in\mathcal T^k}
		\|a_i^k-a_i(\cdot,t)\|_{L_x^1}
		\leq
		\tau_{q+1}\|a_i\|_{C_{x,t}^1}
		\leq
		C\tau_{q+1}\lambda_q^{6n+2\beta\sigma},
		\qquad i=1,2,3,4,\quad k\in\mathbb N.
	\end{equation}

	\subsection{Space-dependent radii}
	\label{subsec:SQG-space-dependent-radius}
	\leavevmode\par\noindent
	The spatial mean of the potential-velocity block $V_r$ is proportional to
	$r^{3/2}$. We therefore encode the frozen coefficient $a_i^k$ in the radius
	through the following choice.
	
	We define the space-dependent physical radius \(r_i^k\) by
	\begin{equation*}
		\bigl(r_i^k(x)\bigr)^{3/2}
		:=
		r_{q+1}^{3/2}a_i^k(x).
	\end{equation*}
	Equivalently,
	\begin{equation*}
		r_i^k(x)
		=
		r_{q+1}\bigl(a_i^k(x)\bigr)^{2/3}.
	\end{equation*}
	
	From \eqref{est-aik}, we obtain the following basic estimates for
	$r_i^k(x)$:
	\begin{enumerate}
		\item[(i)] \refstepcounter{equation}\label{rik-lower}
		$\displaystyle
		r_i^k(x) \ge r_{q+1}\delta_{q+1}^{2/3}.
		$
		\hfill\textup{(\theequation)}
		
		\item[(ii)] \refstepcounter{equation}\label{rik-upper}
		$\displaystyle
		\|r_i^k\|_{L_x^\infty}
		\leq
		Cr_{q+1}
		\lambda_q^{\frac23(4n+\beta\sigma)}.
		$
		\hfill\textup{(\theequation)}
		
		\item[(iii)] \refstepcounter{equation}\label{eq:grad-log-rik}
		$\displaystyle
		\|\nabla \log r_i^k\|_{L_x^\infty}
		=\|\frac23\frac{\nabla a_i^k}{a_i^k}\|_{L_x^\infty}
		\leq
		\delta_{q+1}^{-1}\|a_i^k\|_{C_x^1}
		\leq C \lambda_q^{6n+3\beta\sigma}.
		$
		\hfill\textup{(\theequation)}
	\end{enumerate}
	
	The regularized local core of the SQG building block has radius at most
	$3(r_i^k)^\alpha$. We now impose
	\begin{equation}
		\label{eq:SQG-support-condition}
		3\|r_i^k\|_{L_x^\infty}^{\alpha}
		\leq
		\lambda_{q+1}^{-1}.
	\end{equation}
	Using \eqref{rik-upper}, this follows from the stronger parameter inequality
	\begin{equation*}
		C
		r_{q+1}
		\lambda_q^{\frac23(4n+\beta\sigma)}
		\leq
		\lambda_{q+1}^{-1/\alpha},
	\end{equation*}
	provided
	\begin{equation*}
		\frac{8n}{3\sigma}+\frac23\beta <\mu-\frac{1}{\alpha}
	\end{equation*}
	
	\subsection{Core trajectories}
	\label{subsec:SQG-core-trajectory}
	\leavevmode\par\noindent
	For each frozen coefficient and rational direction, we choose the amplitude
	and trajectory so that the core completes many closed periods during its
	active subinterval and samples the coefficient with the required weight.
	
	Given $k\in\mathbb N$ and $i\in\{1,2,3,4\}$, define a smooth time-cutoff function
	\[
	\zeta_i^k:\mathbb R\to[0,1]
	\]
	associated to the intervals $\mathcal T_i^k$ satisfying
	\[
	\operatorname{supp}\zeta_i^k\subset \mathcal T_i^k,
	\qquad
	\zeta_i^k\equiv 1 \quad \text{on } \bar{\mathcal T}_i^k,
	\]
	and
	\begin{equation*}
		\left\|
		\frac{d}{dt}\zeta_i^k
		\right\|_{L_t^\infty}
		\leq
		\frac{10\lambda_{q+1}}{\tau_{q+1}} .
	\end{equation*}

	We define the amplitude function \(\eta_i^k\) by
	\begin{equation*}
		(\eta_i^k)^2
		=
		4\int_{\mathbb T^2}a_i^k(x)\,dx.
	\end{equation*}
	In particular, by \eqref{est-aik}, we have
	\begin{equation*}
		\eta_i^k \in [2\delta_{q+1}^{\frac12},\sqrt{384}\delta_{q+1}^{\frac12}].
	\end{equation*}
	The cutoff function used in
	Proposition~\ref{prop:sqg-nonconstant-speed} has the form
	\begin{equation*}
		\eta(t):=\eta_i^k\zeta_i^k(t).
	\end{equation*}
	
	Consequently, the trajectory of the center of the core is defined by the ODE
	\begin{equation}
		\label{eq:xik-ODE-SQG-physical}
		\frac{d}{dt}x_i^k(t)
		=\frac{
			\eta_i^k\zeta_i^k(t)}{r_i^k(x_i^k(t))^{3/2}
		}\xi_i,\qquad t\in \mathcal T_i^k.
	\end{equation}
	For brevity, set $r_i^k(t):=r_i^k(x_i^k(t))$.
	Let
	\[
	t_0
	:=
	\tau_{q+1}
	\left(
	k+\frac{i-1}{4}+\lambda_{q+1}^{-1}
	\right).
	\]
	We choose the point \(x_i^k(t_0)\in \mathbb T^2\) satisfying
	\begin{equation*}
		\fint_0^{c_i\lambda_{q+1}}
		a_i^k\bigl(x_i^k(t_0)+s\xi_i\bigr)\,ds
		=
		\int_{\mathbb T^2}a_i^k(x)\,dx= \frac{(\eta_i^k)^2}{4}.
	\end{equation*}
	Such a point exists by averaging \(x_i^k(t_0)\) over $\mathbb{T}^2$ and using
	Fubini's theorem.
	We solve the ODE \eqref{eq:xik-ODE-SQG-physical} both forward and backward in time starting at $t_0$ with position $x_i^k(t_0)$ to
	obtain the trajectory on the entire interval $\mathcal{T}_i^k$.
	
	On the region where \(\zeta_i^k\equiv 1\), the time needed to complete one period
	along the rational line is
	\begin{equation}
		\begin{aligned}
			\label{eq:SQG-period-physical}
			T_i^k
			&=
			\int_0^{c_i\lambda_{q+1}}
			\frac{\bigl(r_i^k(x_i^k(t_0)+s\xi_i)\bigr)^{3/2}}
			{\eta_i^k}\,ds    \\
			&=
			\frac{r_{q+1}^{3/2}}
			{\eta_i^k}
			\int_0^{c_i\lambda_{q+1}}
			a_i^k(x_i^k(t_0)+s\xi_i)\,ds   \\
			&=
			\frac{c_i\lambda_{q+1} r_{q+1}^{3/2}}
			{\eta_i^k}
			\int_{\mathbb T^2}a_i^k(x)\,dx           \\
			&=
			\frac{c_i\lambda_{q+1} r_{q+1}^{3/2}\eta_i^k}
			{4}\\
			&\leq 5 c_i\lambda_{q+1}r_{q+1}^{3/2}\delta_{q+1}^{\frac12}
			\leq \frac{\tau_{q+1}}{10\lambda_{q+1}}.
		\end{aligned}
	\end{equation}
	For the last inequality, we impose
	\begin{equation}\label{est-period}
		\lambda_{q+1}^2 r_{q+1}^{\frac32} \delta_{q+1}^{\frac12}\leq \frac{\tau_{q+1}}{200},
	\end{equation}
	which from \eqref{eq:parameters} can be guaranteed by a stronger condition
	\begin{equation*}
		\frac32 \mu -\beta-2-\kappa>0.
	\end{equation*}

	Let $M_i^k\in\mathbb N$ be the number of complete periods contained in
	$\bar{\mathcal T}_i^k$; by~\eqref{eq:SQG-period-physical},
	$M_i^k\geq\lambda_{q+1}$. Indeed, since
	\[
	|\bar{\mathcal T}_i^k|
	=
	\tau_{q+1}
	\left(\frac14-\frac{2}{\lambda_{q+1}}\right),
	\]
	and
	\[
	T_i^k\leq\frac{\tau_{q+1}}{10\lambda_{q+1}},
	\]
	the number of complete periods satisfies
	\[
	\begin{aligned}
	M_i^k=
	\left\lfloor
	\frac{|\bar{\mathcal T}_i^k|}{T_i^k}
	\right\rfloor\geq
	\left\lfloor
	10\lambda_{q+1}
	\left(\frac14-\frac{2}{\lambda_{q+1}}\right)
	\right\rfloor
	=
	\left\lfloor
	\frac52\lambda_{q+1}-20
	\right\rfloor
	\geq\lambda_{q+1}
	\end{aligned}
	\]
	for sufficiently large $\lambda_0$. It also satisfies
	\begin{enumerate}
		\item[(i)] $x_i^k\left(t_0+m T_i^k\right)=x_i^k\left(t_0\right)$ for every $m \in \mathbb{N}, 0 \leq m \leq M_i^k$,
		\item[(ii)] $t_0+M_i^k T_i^k \leq \tau_{q+1}\left(k+\frac{i}{4}-\lambda_{q+1}^{-1}\right)$ and $t_0+\left(M_i^k+1\right) T_i^k>\tau_{q+1}\left(k+\frac{i}{4}-\lambda_{q+1}^{-1}\right)$.
	\end{enumerate}

	\subsection{Principal perturbation blocks}
	\leavevmode\par\noindent
	We now define the principal SQG building blocks $V_i^k$ and $U_i^k$ adapted to the
	space-dependent scale \(r_i^k\), the time cut-off \(\eta_i^k\zeta_i^k(t)\), and the
	trajectory \(x_i^k(t)\) constructed in the previous subsections.
	More precisely, by
	Proposition~\ref{prop:sqg-nonconstant-speed}, $V_i^k$ and $U_i^k$ solve
	
	\begin{equation}\label{eq:Vik}
		\partial_tV_i^k
		+U_i^k\cdot\nabla V_i^k
		-(\nabla V_i^k)^T U_i^k
		+\nabla P_i^k
		=
		S_i^k
		\frac{d}{dt}
		\left[
		\eta_i^k\zeta_i^k(t)
		\bigl(r_i^k(x_i^k(t))\bigr)^{3/2}
		\right]
		+\operatorname{div} R_i^k,\qquad t\in \mathcal{T}_i^k
	\end{equation}
	We record the basic estimates for the building blocks, source term $S_i^k$ and the error $R_i^k$.
	For every $1<p<\infty$,
	\[
	\|V_i^k\|_{L_t^\infty L_x^p}
	\lesssim_p
	\eta_i^k
	\left\|
	(r_i^k)^{\frac2p-\frac12}
	\right\|_{L_x^\infty},
	\]
	and
	\[
	\|DV_i^k\|_{L_t^\infty L_x^p}
	\lesssim_p
	\eta_i^k
	\left\|
	(r_i^k)^{\frac2p-\frac32}
	\right\|_{L_x^\infty}.
	\]
	For $4/3<p<4$, set $s_p:=3/2-2/p>0$; in particular,
	$s_{\bar p}=3/2-2/\bar p$. For $p=\bar p$ and $p=4$, we have
	\begin{equation}\label{est-Vik}
		\begin{aligned}
		\|V_i^k\|_{L_t^\infty L_x^{\bar p}}
		&\lesssim \delta_{q+1}^{1/2}
		\lambda_{q+1}^{-(1-s_{\bar p})/\alpha},\\
		\|V_i^k\|_{L_t^\infty L_x^4}
		&\lesssim \delta_{q+1}^{1/2},\\
		\|DV_i^k\|_{L_t^\infty L_x^{\bar p}}
		&\lesssim \delta_{q+1}^{1/2-(2/3)s_{\bar p}}
		r_{q+1}^{-s_{\bar p}},\\
		\|DV_i^k\|_{L_t^\infty L_x^4}
		&\lesssim \delta_{q+1}^{-1/6}r_{q+1}^{-1}.
		\end{aligned}
	\end{equation}

	Since \(U_i^k=\Lambda_{\mathbb T^2}V_i^k\), and \(U_r\) scales like the SQG
	active velocity, we also have
	\[
	\|U_i^k\|_{L_t^\infty L_x^p}
	\lesssim_p
	\eta_i^k
	\left\|
	(r_i^k)^{\frac2p-\frac32}
	\right\|_{L_x^\infty},
	\qquad
	\|DU_i^k\|_{L_t^\infty L_x^p}
	\lesssim_p
	\eta_i^k
	\left\|
	(r_i^k)^{\frac2p-\frac52}
	\right\|_{L_x^\infty}.
	\]
	In particular, for $p=\bar p$ and $p=4$, we have
	\begin{equation}\label{est-Uik}
		\|U_i^k\|_{L_t^\infty L_x^4}
		\lesssim
		\delta_{q+1}^{-\frac16} r_{q+1}^{-1},
		\quad
		\|DU_i^k\|_{L_t^\infty L_x^{\bar{p}}}
		\lesssim
		\delta_{q+1}^{-\frac16-\frac23 s_{\bar{p}}} r_{q+1}^{-1-s_{\bar{p}}},
		\quad
		\|DU_i^k\|_{L_t^\infty L_x^{4}}
		\lesssim
		\delta_{q+1}^{-\frac{5}{6}} r_{q+1}^{-2}.
	\end{equation}
	
	For the source term
	\[
	S_i^k\frac{d}{dt}
	\left[
	\eta_i^k\zeta_i^k(t)
	\bigl(r_i^k(t)\bigr)^{3/2}
	\right],
	\]
	we have
	\begin{equation*}
		\|S_i^k(\cdot,t)\|_{L_x^p}
		\lesssim_p
		\bigl(r_i^k\bigr)^{\frac2p-2}
		\lesssim
		r_{q+1}^{\frac{2}{p}-2}\delta_{q+1}^{\frac{4}{3p}-\frac43},
	\end{equation*} 
	\begin{equation*}
		\supp S_i^k(\cdot,t)
		\subseteq B_{3(r_i^k(t))^\alpha}(x_i^k(t))
		\subseteq B_{\lambda_{q+1}^{-1}}(x_i^k(t)),
		\qquad
		\int_{\mathbb{T}^2} S_i^k(x,t)\,dx=\xi_i.
	\end{equation*}
	Next, we also have
	\begin{equation}\label{est-ddt}
		\begin{aligned}
			\left|\frac{d}{d t}\left(\eta_i^k \zeta_i^k(t) (r_i^k(t))^{3/2}\right)\right| & \leq\left|\eta_i^k (r_i^k(t))^{3/2} \frac{d \zeta_i^k}{d t}\right|+C\left|\left(\eta_i^k \zeta_i^k\right)^2 \frac{\nabla r_i^k}{r_i^k}\right| \\
			& \leq C \delta_{q+1}^{\frac{1}{2}} r_{q+1}^{3/2}\left\|a_i^k\right\|_{C_{x}^{0}} \frac{\lambda_{q+1}}{\tau_{q+1}}+C \delta_{q+1}\left\|\frac{\nabla a_i^k}{a_i^k}\right\|_{L_x^{\infty}} \\
			& \leq C\delta_{q+1}^{\frac{1}{2}} r_{q+1}^{3/2}\|a_i^k\|_{C_x^0}\frac{\lambda_{q+1}}{\tau_{q+1}}+ C\left\|a_i^k\right\|_{C_x^1}\\
			& \leq C \left\|a_i^k\right\|_{C_x^1} 
			\leq C \lambda_q^{6n+2\beta\sigma},
		\end{aligned}
	\end{equation} 
	where the last line is guaranteed by \eqref{est-period}.
	
	For the time derivatives, 
	we have
	\[
	\begin{aligned}
		\|\partial_tV_i^k\|_{L_t^\infty L_x^p}
		&\lesssim_p
		\eta_i^k\frac{\lambda_{q+1}}{\tau_{q+1}}
		\left\|
		(r_i^k)^{\frac2p-\frac12}
		\right\|_{L_x^\infty}
		\\
		&\quad
		+
		(\eta_i^k)^2
		\left[
		\left\|
		(r_i^k)^{\frac2p-3}
		\right\|_{L_x^\infty}
		+
		\|\nabla\log r_i^k\|_{L_x^\infty}
		\left\|
		(r_i^k)^{\frac2p-2}
		\right\|_{L_x^\infty}
		\right].
	\end{aligned}
	\]
	Similarly,
	\[
	\begin{aligned}
		\|\partial_tDV_i^k\|_{L_t^\infty L_x^p}
		&\lesssim_{p}
		\eta_i^k\frac{\lambda_{q+1}}{\tau_{q+1}}
		\left\|
		(r_i^k)^{\frac2{p}-\frac32}
		\right\|_{L_x^\infty}
		\\
		&\quad
		+
		(\eta_i^k)^2
		\left[
		\left\|
		(r_i^k)^{\frac2{p}-4}
		\right\|_{L_x^\infty}
		+
		\|\nabla\log r_i^k\|_{L_x^\infty}
		\left\|
		(r_i^k)^{\frac2{p}-3}
		\right\|_{L_x^\infty}
		\right].
	\end{aligned}
	\]
	Since $\|\nabla\log r_i^k\|_{L_x^\infty}$ can be controlled by $O(\lambda_{q+1}^{\frac{6n}{\sigma}+3\beta})$, much smaller than $O(\lambda_{q+1}^{\mu+\frac23\beta})$, which is the bound of $(r_i^k)^{-1}$,
	when $p=\bar p$ and $p=4$, we have
	\begin{equation}\label{est:ptpxV-barp}
		\|\partial_tDV_i^k\|_{L_t^\infty L_x^{\bar{p}}}
		\leq C_{\bar{p}}\delta_{q+1}^{\frac12-\frac23 s_{\bar{p}}}
		\lambda_{q+1}^{1+\kappa+\mu s_{\bar{p}}}
		+C_{\bar p}\delta_{q+1}^{-\frac23-\frac23 s_{\bar{p}}}
		\lambda_{q+1}^{\mu(s_{\bar{p}}+\frac52)},
	\end{equation}
	and
	\begin{equation}\label{est-ptVik}
		\|\partial_tV_i^k\|_{L_t^\infty L_x^4}+ (r_{q+1}\delta_{q+1}^\frac23)\|\partial_t DV_i^k\|_{L_t^\infty L_x^4}
		\leq
		C\delta_{q+1}^{\frac12}\lambda_{q+1}^{1+\kappa}
		+C \delta_{q+1}^{-\frac23} r_{q+1}^{-\frac52},
	\end{equation}
	where we assume
	\[
	\mu>\frac{6n}{\sigma}+3\beta.
	\]
	Finally, the error tensor satisfies
	\begin{equation*}
		\begin{aligned}
			\| R_i^k\|_{L_t^\infty L_x^1}
			&\leq C_p
			(\eta_i^k)^2
			\left[
			\left\|
			(r_i^k)^{\frac{2}{p}+1}
			\right\|_{L_x^\infty}
			+
			\|\nabla\log r_i^k\|_{L_x^\infty}
			\left\|
			(r_i^k)^{\frac{2}{p}-1}
			\right\|_{L_x^\infty}
			\right]\\
			&\leq C_p \delta_{q+1} \lambda_q^{6n+3\beta\sigma}(r_{q+1} \lambda_q^{\frac23(4n+\beta\sigma)})^{\frac{2}{p}-1}\\
			&\leq C_p\delta_{q+1}r_{q+1}^{\frac2p-1}
			\lambda_q^{6n+3\beta\sigma+
				\left(\frac2p-1\right)
				\left(\frac{8n}{3}
				+
				\frac{2\beta\sigma}{3}
				\right)}.
		\end{aligned}
	\end{equation*}

	\subsection{Auxiliary source profile}
	\label{subsec:auxiliary-sqg-building-block}
	\leavevmode\par\noindent
	The exact source $S_i^k$ has a radius depending on $a_i^k$, so its orbit
	average is not explicit. We therefore replace it by a fixed-width profile
	with the same spatial mean and an explicit uniform orbit average; this
	replacement also yields a small factor in the cancellation error.
	Introduce the auxiliary source terms
	\begin{equation}
		\label{eq:auxiliary-U-definition}
		\mathcal U_i^k(x,t)
		:=
		\frac{d}{dt}\left[
		\eta_i^k\zeta_i^k(t)
		\bigl(r_i^k(x_i^k(t))\bigr)^{3/2}
		\right]\,
		\widetilde\Omega_i^k(x-x_i^k(t))\xi_i,
		\qquad
		t\in \mathcal T_i^k,
	\end{equation}
	\begin{equation}\label{eq:auxiliary-U-global}
		\mathcal U_{q+1}:=
		\sum_{k\in\mathbb N}\sum_{i=1}^4\mathcal U_i^k.
	\end{equation}
	Here each $\mathcal U_i^k$ is extended by zero outside
	$\mathcal T_i^k$, and the scalar function
	$\widetilde\Omega_i^k(x-x_i^k(t))$ is required to satisfy the following properties:
	\begin{enumerate}
		\item[(i)] $\widetilde\Omega_i^k\in C^\infty(\mathbb T^2),
		\qquad
		\widetilde\Omega_i^k\geq 0,
		\qquad
		\operatorname{supp}\widetilde\Omega_i^k
		\subset B_{10\lambda_{q+1}^{-1}}(0),$
		\item[(ii)] Its spatial average equals $1$:
		\begin{equation*}
			\int_{\mathbb T^2}\widetilde\Omega_i^k(x)\,dx= 1,
		\end{equation*}
		
		\item[(iii)] For every $x\in \mathbb{T}^2$, one has
		\begin{equation}
			\label{eq:Omega-orbit-average}
			\frac1{c_i \lambda_{q+1}}\int_0^{c_i \lambda_{q+1}}\widetilde\Omega_i^k (x-(x_i^k(t_0)+s\xi_i))\,ds=1.
		\end{equation}
		
		\item[(iv)] For every \(1\le p\le \infty\),
		\begin{equation}
			\label{eq:Omega-Lp-bounds}
			\|\widetilde\Omega_i^k\|_{L^p}
			\le C_p\lambda_{q+1}^{2-\frac2p},
			\qquad
			\|D\widetilde\Omega_i^k\|_{L^p}
			\le C_p\lambda_{q+1}^{3-\frac2p},
			\qquad
			\|D^2\widetilde\Omega_i^k\|_{L^p}
			\le C_p\lambda_{q+1}^{4-\frac2p}.
		\end{equation}
	\end{enumerate}
	Such a function exists by normalization and periodization; see~\cite{BCK26}
	for a detailed proof. Property~(ii) ensures that the auxiliary
	source has the same spatial average as $S_i^k$ and thereby yields an
	additional factor of $\lambda_{q+1}^{-1}$ when estimating the
	anti-divergence of
	$S_i^k-\widetilde\Omega_i^k(x-x_i^k(t))\xi_i$.

	\begin{proposition}
		\label{prop:auxiliary-sqg-building-block}
		Let $\mathcal{U}_{q+1}$ be defined by
		\eqref{eq:auxiliary-U-definition}--\eqref{eq:auxiliary-U-global}. Then,
		for every \(1<p<\infty\),
		\begin{equation}
			\label{eq:auxiliary-U-estimate}
			\|\mathcal U_{q+1}\|_{L_t^\infty L_x^p}
			+
			\lambda_{q+1}^{-1}
			\|D\mathcal U_{q+1}\|_{L_t^\infty L_x^p}
			+
			\lambda_{q+1}^{-2}
			\|D^2\mathcal U_{q+1}\|_{L_t^\infty L_x^p}
			\le
			C_p\lambda_{q+1}^{2-\frac2p}
			\lambda_q^{6n+2\beta\sigma}.
		\end{equation}
		Moreover, for every $k\in\mathbb N$ there exist smooth symmetric
		tensors $G_1^k,\ldots,G_4^k$ such that, for $t\in\mathcal T^k$,
		\begin{equation}
			\label{eq:auxiliary-time-average-sqg}
			P_{\tau_{q+1}}\mathcal U_{q+1}(x,t)
			=
			\operatorname{div}
			\left(
			\sum_{i=1}^4 a_i^k(x)\xi_i\otimes\xi_i
			+
			\sum_{i=1}^4G_i^k(x)
			\right),
		\end{equation}
		\begin{equation}
			\label{eq:Gik-L1-estimate-sqg}
			\|G_i^k\|_{L^1_x}
			\le
			C\frac{\|a_i^k\|_{C^1_x}}{\lambda_{q+1}},
			\qquad i=1,2,3,4.
		\end{equation}
	\end{proposition}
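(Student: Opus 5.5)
The plan is to treat the two parts of Proposition~\ref{prop:auxiliary-sqg-building-block} separately. The pointwise-in-time bound \eqref{eq:auxiliary-U-estimate} is a direct consequence of earlier estimates. The identity \eqref{eq:auxiliary-time-average-sqg} comes from an integration by parts in time followed by a change of variables from time to arclength along the rational orbit.

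\emph{Step 1: the estimate \eqref{eq:auxiliary-U-estimate}.} The cutoffs $\zeta_i^k$ have disjoint supports, so at each time $t$ at most one term $\mathcal U_i^k$ is nonzero. For that term, the scalar factor $\frac{d}{dt}[\eta_i^k\zeta_i^k(r_i^k)^{3/2}]$ is bounded by $C\lambda_q^{6n+2\beta\sigma}$ by \eqref{est-ddt}. Spatial derivatives fall only on $\widetilde\Omega_i^k(\cdot-x_i^k(t))$. Hence \eqref{eq:Omega-Lp-bounds} gives the three bounds, with the factors $\lambda_{q+1}^{2-2/p}$, $\lambda_{q+1}^{3-2/p}$ and $\lambda_{q+1}^{4-2/p}$.

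\emph{Step 2: integration by parts in time.} Fix $k$ and $i$, and write $f(t)=\eta_i^k\zeta_i^k(t)(r_i^k(t))^{3/2}$. Since $\zeta_i^k$ vanishes at the endpoints of $\mathcal T_i^k$, using \eqref{eq:xik-ODE-SQG-physical} we get
\[
\int_{\mathcal T_i^k}f'(t)\,\widetilde\Omega_i^k(x-x_i^k(t))\,dt
=\int_{\mathcal T_i^k}f(t)\,\dot x_i^k(t)\cdot\nabla\widetilde\Omega_i^k(x-x_i^k(t))\,dt
=\xi_i\cdot\nabla_x\,g_i^k(x),
\]
where $g_i^k(x):=(\eta_i^k)^2\int_{\mathcal T_i^k}(\zeta_i^k)^2\,\widetilde\Omega_i^k(x-x_i^k(t))\,dt$. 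The key point is that the factor $r^{3/2}$ cancels against $1/r^{3/2}$ in the speed. Therefore
\[
P_{\tau_{q+1}}\mathcal U_i^k=\xi_i\,\xi_i\cdot\nabla(\tau_{q+1}^{-1}g_i^k)=\operatorname{div}\bigl(\tau_{q+1}^{-1}g_i^k\,\xi_i\otimes\xi_i\bigr).
\]
It then suffices to show that $\tau_{q+1}^{-1}g_i^k=a_i^k+h_i^k$ with $\|h_i^k\|_{L^1}\le C\|a_i^k\|_{C^1}/\lambda_{q+1}$, and to set $G_i^k:=h_i^k\,\xi_i\otimes\xi_i$, which is symmetric.

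\emph{Step 3: computing $g_i^k$ by orbit averaging.} Split $\mathcal T_i^k$ into two parts.
\begin{itemize}
\item The $M_i^k$ complete periods starting at $t_0$, where $\zeta_i^k\equiv1$.
\item The remainder, consisting of the cutoff layers and the incomplete last period. Its total length is at most $C\tau_{q+1}/\lambda_{q+1}$, using $T_i^k\le\tau_{q+1}/(10\lambda_{q+1})$ from \eqref{eq:SQG-period-physical}.
\end{itemize}
The remainder contributes at most $(\eta_i^k)^2\tau_{q+1}^{-1}\cdot C\tau_{q+1}\lambda_{q+1}^{-1}\|\widetilde\Omega_i^k\|_{L^1}\lesssim\delta_{q+1}/\lambda_{q+1}$ in $L^1$. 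This is admissible since $\delta_{q+1}\le a_i^k\le\|a_i^k\|_{C^1}$.

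On a complete period, change variables $t\mapsto s$ with $dt=(r_i^k)^{3/2}/\eta_i^k\,ds=r_{q+1}^{3/2}a_i^k(x_0+s\xi_i)/\eta_i^k\,ds$, where $x_0=x_i^k(t_0)$. Because $\widetilde\Omega_i^k$ is supported in $B_{10\lambda_{q+1}^{-1}}$, replace $a_i^k(x_0+s\xi_i)$ by $a_i^k(x)$. The pointwise error is $10\lambda_{q+1}^{-1}\|\nabla a_i^k\|_\infty\,\widetilde\Omega_i^k(x-x_0-s\xi_i)$, and its $L^1_x$ norm, integrated over the period and multiplied by $(\eta_i^k)^2M_i^k r_{q+1}^{3/2}/(\eta_i^k\tau_{q+1})$, is $\lesssim\|a_i^k\|_{C^1}/\lambda_{q+1}$. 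Here we use $\eta_i^kM_i^kc_i\lambda_{q+1}r_{q+1}^{3/2}=4M_i^kT_i^k\le\tau_{q+1}$.

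The main term, by the orbit average \eqref{eq:Omega-orbit-average}, equals $a_i^k(x)\cdot 4M_i^kT_i^k/\tau_{q+1}$. Since $|\tau_{q+1}/4-M_i^kT_i^k|\le C\tau_{q+1}/\lambda_{q+1}$, this is $a_i^k(x)+O(\lambda_{q+1}^{-1})\,a_i^k(x)$, and $\|a_i^k\|_{L^1}\le\|a_i^k\|_{C^1}$.

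Summing over $i$ gives \eqref{eq:auxiliary-time-average-sqg} and \eqref{eq:Gik-L1-estimate-sqg}. The main obstacle is this bookkeeping in Step 3. One must keep every error in the gradient form $\xi_i\cdot\nabla(\cdot)$, so that it becomes part of a symmetric $G_i^k$. One must also check that the normalization $(\eta_i^k)^2=4\int a_i^k$, together with the choice of $x_0$, makes the prefactor exactly $4M_i^kT_i^k/\tau_{q+1}\approx1$.
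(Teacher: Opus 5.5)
Your proposal is correct and follows essentially the same route as the paper. Both arguments integrate by parts in time, using the cancellation of $(r_i^k)^{3/2}$ against the speed, and split $\mathcal T_i^k$ into the $M_i^k$ complete periods and a remainder of length $O(\tau_{q+1}/\lambda_{q+1})$; on the complete periods both change variables to arclength, freeze $a_i^k$ at $x$ using the $O(\lambda_{q+1}^{-1})$ support of $\widetilde\Omega_i^k$, and control the prefactor $4M_i^kT_i^k/\tau_{q+1}=1+O(\lambda_{q+1}^{-1})$, which correspond exactly to the paper's error terms $II$, $II'$ and $E$.
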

	
	\begin{proof}
		First, from the definition 
		\eqref{eq:auxiliary-U-definition}, the estimates \eqref{est-ddt} and \eqref{eq:Omega-Lp-bounds}, we have
		\begin{equation*}
			\begin{aligned}
				&\|\mathcal U_{q+1}\|_{L_x^p}
				+
				\lambda_{q+1}^{-1}
				\|D\mathcal U_{q+1}\|_{L_x^p}
				+
				\lambda_{q+1}^{-2}
				\|D^2\mathcal U_{q+1}\|_{L_t^\infty L_x^p}\\
				\le &
				C\lambda_{q+1}^{2-\frac2p}
				\sup_{t\in \mathcal{T}_i^k}\left|\frac{d}{dt}\left[
				\eta_i^k\zeta_i^k(t)
				\bigl(r_i^k(x_i^k(t))\bigr)^{3/2}
				\right]\right|\\
				\le & 
				C \lambda_{q+1}^{2-\frac2p} \lambda_q^{6n+2\beta\sigma}.
			\end{aligned}
		\end{equation*}
		To prove \eqref{eq:auxiliary-time-average-sqg}, we calculate the integral over $\mathcal{T}_i^k$ by integration by parts:
		\begin{equation}\label{Uq-decomp-1}
			\begin{aligned}
				\int_{\mathcal{T}_i^k}\mathcal U_{q+1}(x,t)\,dt
				&=\int_{\mathcal{T}_i^k}\frac{d}{dt}\left[
				\eta_i^k\zeta_i^k(t)
				\bigl(r_i^k(t)\bigr)^{3/2}
				\right]
				\widetilde\Omega_i^k(x-x_i^k(t))\xi_i\,dt\\		
				&= -\int_{\mathcal{T}_i^k}\eta_i^k\zeta_i^k(t)
				\bigl(r_i^k(t)\bigr)^{3/2}
				\frac{d}{dt}
				\Big[
				\widetilde\Omega_i^k(x-x_i^k(t))
				\Big]\xi_i\,dt \\
				&=  
				\int_{\mathcal{T}_i^k}
				(\eta_i^k \zeta_i^k)^2
				\operatorname{div}\left(\widetilde\Omega_i^k(x-x_i^k(t))\xi_i\otimes\xi_i\right)\,dt \\
				&= 
				\operatorname{div}
				\left[
				\xi_i\otimes\xi_i
				\int_{\mathcal{T}_i^k}
				(\eta_i^k \zeta_i^k)^2
			\widetilde\Omega_i^{k}(x-x_i^k(t))\,dt
				\right].
			\end{aligned}
		\end{equation}

		Recall $t_0=\tau_{q+1}\left(
		k+\frac{i-1}{4}+\lambda_{q+1}^{-1}
		\right)$. 
	We set $\widetilde{\mathcal{T}}_i^k:=\left[t_0,t_0+M_i^kT_i^k\right]
	\subseteq\bar{\mathcal{T}}_i^k$. Since $\zeta_i^k(t)=1$ for
	$t\in\bar{\mathcal{T}}_i^k$, we write
		
		\begin{equation*}
			\begin{aligned}
				& \int_{\mathcal{T}_i^k}\left(\eta_i^k \zeta_i^k(t)\right)^2 \widetilde{\Omega}_i^k(x-x_i^k(t)) d t \\
			& \quad=(\eta_i^k)^2 \int_{\widetilde{\mathcal{T}}_i^k} \widetilde{\Omega}_i^k(x-x_i^k(t)) d t+(\eta_i^k)^2 \int_{\mathcal{T}_i^k \backslash \widetilde{\mathcal{T}}_i^k}( \zeta_i^k(t))^2 \widetilde{\Omega}_i^k(x-x_i^k(t)) d t=: I+II.
			\end{aligned}
		\end{equation*}

	The term $II$, multiplied by $\xi_i \otimes \xi_i$, will be part of
	the error $G_i^k$. Its $L^1$ estimate is
		\begin{equation}\label{est-II}
		\|II\|_{L^1} \leq\left(\eta_i^k\right)^2\left\|\widetilde{\Omega}_i^k\right\|_{L^1} \int_{\mathcal{T}_i^k \backslash \widetilde{\mathcal{T}}_i^k}\left(\zeta_i^k(t)\right)^2 d t \leq C \delta_{q+1} \tau_{q+1} \lambda_{q+1}^{-1}.
		\end{equation}

		Next, we consider the main term $I$. We do the change of variables $t \rightarrow t(s) \in \widetilde{\mathcal{T}}_i^k$ such that $x_i^k(t(s))=x_i^k\left(t_0\right)+s \xi_i$, where $t(0)=t_0$. 
		Then
		\[
		\frac{dt}{ds}
		=
		\frac{\big(r_i^k(x_i^k(t_0)+s\xi_i)\big)^{3/2}}
		{\eta_i^k\zeta_i^k(t(s))}
		=
		\frac{r_{q+1}^{3/2}a_i^k(x_i^k(t_0)+s\xi_i)}
		{\eta_i^k \zeta_i^k(t(s))}.
		\]
		Employing the change of variables, we compute the term $I$ as follows:
		
		\begin{equation*}
			\begin{aligned}
				I & =\eta_i^k r_{q+1}^{3/2} \int_0^{c_i M_i^k \lambda_{q+1}}a_i^k(x_i^k(t_0)+s\xi_i)\widetilde{\Omega}_i^k\left(x-\left(x_i^k\left(t_0\right)+s \xi_i\right)\right) d s \\
				& =\eta_i^k r_{q+1}^{3/2} a_i^k(x) \int_0^{c_i M_i^k \lambda_{q+1}} \widetilde{\Omega}_i^k\left(x-\left(x_i^k\left(t_0\right)+s \xi_i\right)\right) d s \\
				& +\eta_i^k r_{q+1}^{3/2} \int_0^{c_i M_i^k \lambda_{q+1}}\left(a_i^k(x_i^k(t_0)+s \xi_i)-a_i^k(x)\right) \widetilde{\Omega}_i^k\left(x-\left(x_i^k(t_0)+s \xi_i\right)\right) d s. \\
			& =: I^{\prime}+II^{\prime},
			\end{aligned}
		\end{equation*}

	where $I^{\prime}$ is the main term and $II^{\prime}$ will be part of
	the error $G_i^k$. We rewrite the main term

		\begin{equation}\label{Uq-decomp-4}
			\begin{aligned}
				I^{\prime}
				=&\eta_i^k r_{q+1}^{3/2} a_i^k(x) \int_0^{c_i M_i^k \lambda_{q+1}} \widetilde{\Omega}_i^k\left(x-\left(x_i^k\left(t_0\right)+s \xi_i\right)\right) d s \\
				=&\eta_i^k r_{q+1}^{3/2} c_i M_i^k \lambda_{q+1} a_i^k(x)\\
				=&\left(\tau_{q+1}+E\right) a_i^k(x),
			\end{aligned}
		\end{equation}
		where using \eqref{eq:SQG-period-physical} and \eqref{est-period}, the error $E$ could be estimated by
		\begin{equation}\label{est-E}
			|E| \leq C \frac{\tau_{q+1}}{\lambda_{q+1}}+ C c_i \lambda_{q+1} r_{q+1}^{3/2}\eta_i^k
			\leq C \frac{\tau_{q+1}}{\lambda_{q+1}}+C \delta_{q+1}^{1 / 2} r_{q+1}^{3/2} \lambda_{q+1} \leq C \frac{\tau_{q+1}}{\lambda_{q+1}}.
		\end{equation}
	We now estimate $II^{\prime}$:
		\begin{equation}\label{est-II'}
		\left\|II^{\prime}\right\|_{L^1} \leq C \eta_i^k r_{q+1}^{3/2} c_i M_i^k \lambda_{q+1} \frac{\left\|a_i^k\right\|_{C^1}}{\lambda_{q+1}} \leq C\left(\tau_{q+1}+E\right) \frac{\left\|a_i^k\right\|_{C^1}}{\lambda_{q+1}} \leq C \tau_{q+1} \frac{\left\|a_i^k\right\|_{C^1}}{\lambda_{q+1}} . 
		\end{equation}
		
	Combining \eqref{Uq-decomp-1}--\eqref{Uq-decomp-4} gives, for each
	$i$,
		\begin{equation*}
			\begin{aligned}
				\frac{1}{\tau_{q+1}} \int_{\mathcal{T}_i^k} \mathcal U_{q+1}(x, t) d t=\operatorname{div}\left(a_i^k(x) \xi_i \otimes \xi_i+G_i^k(x)\right), \\
			G_i^k=\frac{1}{\tau_{q+1}}\left(II+II^{\prime}+E a_i^k(x)\right) \xi_i \otimes \xi_i.
			\end{aligned}
		\end{equation*}
		
		Combining the estimates \eqref{est-II}, \eqref{est-E} and \eqref{est-II'}, we obtain
		\begin{equation*}
			\left\|G_i^k\right\|_{L^1} 
			\leq C \frac{\delta_{q+1}}{\lambda_{q+1}}
			+C \frac{\left\|a_i^k\right\|_{L^1}}{\lambda_{q+1}}
			+C \frac{\left\|a_i^k\right\|_{C^1}}{\lambda_{q+1}} 
			\leq C \frac{\left\|a_i^k\right\|_{C^1}}{\lambda_{q+1}}.\qedhere
		\end{equation*}
	\end{proof}

	\subsection{Time corrector}
	\label{subsec:sqg-time-corrector}
	
	The auxiliary source reconstructs the old stress only after averaging over
	a coarse time interval. The time corrector absorbs its zero-average temporal
	oscillation, leaving the averaged source in the new equation.
	
	Let \(\mathbb P_{\mathrm L}\) denote the Leray projection on \(\mathbb T^2\). 
	We define the time corrector \(Q_{q+1}\) by
	\begin{equation*}
		-Q_{q+1}(x,t)
		:=
		\mathbb P_{\mathrm L}
		\left(
		\int_0^t \Bigl(\mathcal{U}_{q+1}(x,s)
		- P_{\tau_{q+1}}\mathcal{U}_{q+1}(x,s)\Bigr)\,ds
		\right).
	\end{equation*}
	Equivalently, since the integral over every previous complete interval
	such as $[k\tau_{q+1},(k+1)\tau_{q+1}]$ vanishes by the definition of
	\(P_{\tau_{q+1}}\), only the current interval contributes:
	\begin{equation}
		\label{eq:time-corrector-local-form-sqg}
		-Q_{q+1}(x,t)
		=
		\mathbb P_{\mathrm L}
		\int_{k\tau_{q+1}}^t
		\left(
		\mathcal U_{q+1}(x,s)
		-
		\fint_{\mathcal T^k}\mathcal U_{q+1}(x,s')\,ds'
		\right)\,ds .
	\end{equation}
	
	Let $\pi_Q$ be the mean-zero scalar determined by
	\begin{equation*}
		\nabla\pi_Q
		:=
		(\operatorname{Id}-\mathbb P_{\mathrm L})
		\bigl(\mathcal U_{q+1}-P_{\tau_{q+1}}\mathcal U_{q+1}\bigr).
	\end{equation*}
	Then the time corrector satisfies
	\begin{equation}
		\label{eq:time-corrector-identity-sqg}
		\partial_tQ_{q+1}
		+
		\mathcal U_{q+1}
		-
		P_{\tau_{q+1}}\mathcal U_{q+1}
		=\nabla\pi_Q.
	\end{equation}
	For every \(1<p<\infty\), by definition
	\eqref{eq:time-corrector-local-form-sqg}, the $L^p$-boundedness of Leray projection $\mathbb{P}_{\mathrm L}$ and \eqref{eq:auxiliary-U-estimate}, we have
	\begin{equation}
		\label{eq:time-corrector-Lp-estimate-sqg}
		\begin{aligned}
			&\|Q_{q+1}\|_{L_t^\infty L_x^p}
			+
			\lambda_{q+1}^{-1}
			\|DQ_{q+1}\|_{L_t^\infty L_x^p}
			+
			\lambda_{q+1}^{-2}
			\|D^2 Q_{q+1}\|_{L_t^\infty L_x^p}\\
			\le
			&C\tau_{q+1}
			\left(
			\|\mathcal U_{q+1}\|_{L_t^\infty L_x^p}
			+
			\lambda_{q+1}^{-1}
			\|D\mathcal U_{q+1}\|_{L_t^\infty L_x^p}
			+
			\lambda_{q+1}^{-2}
			\|D^2\mathcal U_{q+1}\|_{L_t^\infty L_x^p}
			\right)\\
			\leq&C \tau_{q+1} \lambda_{q+1}^{2-\frac2p}
			\lambda_q^{6n+2\beta\sigma}.
		\end{aligned}
	\end{equation}
	Moreover, by \eqref{eq:time-corrector-identity-sqg},
	\begin{equation*}
		\|\partial_tQ_{q+1}\|_{L_t^\infty L_x^p}
		\le
		C\|\mathcal U_{q+1}\|_{L_t^\infty L_x^p}
		\le
		C  \lambda_{q+1}^{2-\frac2p}
		\lambda_q^{6n+2\beta\sigma},
	\end{equation*}
	and
	\begin{equation}
		\label{eq:time-corrector-dtD-estimate-sqg}
		\|\partial_tDQ_{q+1}\|_{L_t^\infty L_x^p}
		\le
		C\|D\mathcal U_{q+1}\|_{L_t^\infty L_x^p}
		\le
		C  \lambda_{q+1}^{3-\frac2p}
		\lambda_q^{6n+2\beta\sigma}.
	\end{equation}
	
	Since \(Q_{q+1}\) is a correction to the potential velocity \(v\), its corresponding
	SQG transport velocity is defined by
	\begin{equation*}
		Q_{q+1}^u
		:=
		\Lambda \mathbb{P}_{\neq0}Q_{q+1}.
	\end{equation*}
	Consequently, for every \(1<p<\infty\),
	\begin{equation}\label{eq:Q-u-estimate}
		\begin{aligned}
			&\|Q_{q+1}^u\|_{L_t^\infty L_x^p}
			+\lambda_{q+1}^{-1}\|D Q_{q+1}^u\|_{L_t^\infty L_x^p}\\
			\leq 
			&C
			\|DQ_{q+1}\|_{L_t^\infty L_x^p}
			+
			C\lambda_{q+1}^{-1}\|D^2 Q_{q+1}\|_{L_t^\infty L_x^p}\\
			\leq
			&C \tau_{q+1} \lambda_{q+1}^{3-\frac2p}
			\lambda_q^{6n+2\beta\sigma}.
		\end{aligned}
	\end{equation}
	Finally, from \eqref{eq:time-corrector-local-form-sqg}, one has
	\begin{equation*}
		Q_{q+1}(x,k\tau_{q+1})=Q^u_{q+1}(x,k\tau_{q+1})=0,
		\qquad
		k\in\mathbb N .
	\end{equation*}

	\subsection{New approximate solution and Reynolds stress}
	\label{subsec:perturbation-new-reynolds-stress-sqg}
	\leavevmode\par\noindent
	
	We now combine the mollified background, the principal perturbations, and
	the time corrector. Expanding the momentum nonlinearity then identifies the
	remaining errors and determines the new Reynolds stress.
	
	Define the principal potential-velocity perturbation by
	\begin{equation*}
		w_{q+1}(x,t)
		:=
		\sum_{k\in\mathbb N}\sum_{i=1}^4 V_i^k(x,t),
	\end{equation*}
	and the corresponding transport-velocity perturbation by
	\begin{equation*}
		W_{q+1}(x,t)
		:=
		\sum_{k\in\mathbb N}\sum_{i=1}^4 U_i^k(x,t).
	\end{equation*}
	By construction,
	\begin{equation*}
		W_{q+1}
		=
		\Lambda \mathbb{P}_{\neq0}w_{q+1}.
	\end{equation*}
	
	We define the new potential velocity by
	\begin{equation}
		\label{eq:vq1-definition-sqg}
		v_{q+1}
		:=
		v_\ell+w_{q+1}+Q_{q+1},
	\end{equation}
	and the new transport velocity by
	\begin{equation}
		\label{eq:uq1-definition-sqg}
		u_{q+1}
		:=
		\Lambda \mathbb{P}_{\neq0}v_{q+1}
		=
		u_\ell+W_{q+1}+Q_{q+1}^u .
	\end{equation}
	The new scalar is
	\begin{equation*}
		\theta_{q+1}
		:=
		-\operatorname{curl}v_{q+1}.
	\end{equation*}
	Extend every $V_i^k$ and $U_i^k$ by zero outside
	$\mathcal T_i^k$. Because \(V_i^k\) and \(Q_{q+1}\) vanish at the endpoints
	\(t=k\tau_{q+1}\),
	\begin{equation*}
		v_{q+1}(x,k\tau_{q+1})
		=
		v_\ell(x,k\tau_{q+1}),
		\qquad
		k\in\mathbb N .
	\end{equation*}
	The transport velocity satisfies the analogous identity
	\begin{equation*}
		u_{q+1}(x,k\tau_{q+1})
		=
		u_\ell(x,k\tau_{q+1}),
		\qquad
		k\in\mathbb N .
	\end{equation*}
	Since $\tau_{q+1}^{-1}$ is an integer, every integer time is a grid
	point. Thus, by \eqref{eq:u-mollification-Du}, for every $m\in\mathbb N$ and $q\in \mathbb N$,
	we have
	\begin{equation*}
		\begin{aligned}
		&\|v_{q+1}(\cdot,m)-v_{q}(\cdot,m)\|_{L^4}
		+\|u_{q+1}(\cdot,m)-u_{q}(\cdot,m)\|_{L^4}\\
		\leq& 
		\|v_\ell-v_q\|_{L_t^\infty L_x^4}
		+
		\|u_\ell-u_q\|_{L_t^\infty L_x^4}\\
		 \leq& 
		 \lambda_0^{-\beta}\lambda_1^{2\beta} \lambda_q^{-n- \beta\sigma}\leq \lambda_q^{-1}.
		\end{aligned}
	\end{equation*}
    which in particular implies \eqref{eq:est-endpoint} by choosing $m=0$ and $m=1$.
	
	Combining \eqref{eq:vl}, \eqref{eq:error-decomposition}, \eqref{eq:Vik}, \eqref{eq:auxiliary-time-average-sqg} and \eqref{eq:time-corrector-identity-sqg}, we have
	\begin{equation*}
		\begin{aligned}
			&\partial_t v_{q+1}+ \sum_{i,k} N(U_i^k,V_i^k) + N(u_\ell,v_\ell)
			+\nabla \Big(p_\ell+\sum_{i,k} P_i^k+P^d-\pi_Q \Big)\\
			=&
			\div \Big(\sum_{i,k}(a_i^k(x)-a_i(x,t))\xi_i\otimes \xi_i\Big)
			+\sum_{i,k}\div(R_i^k+G_i^k)\\
			&\quad+\sum_{i,k}(S_i^k-\widetilde{\Omega}_i^k \xi_i) \frac{d}{dt}\left[
			\eta_i^k\zeta_i^k(t)
			\bigl(r_i^k(x_i^k(t))\bigr)^{3/2}
			\right].
		\end{aligned}
	\end{equation*}
	The nonlinear terms on the left-hand side can be decomposed as
	\begin{equation*}
		\sum_{i,k} N(U_i^k,V_i^k) + N(u_\ell,v_\ell)=N(u_{q+1},v_{q+1})-\mathcal E_{q+1}^{(l)} -\mathcal E_{q+1}^{(c)},
	\end{equation*}
	where
	\begin{equation*}
		\mathcal E_{q+1}^{(l)}
		:=
		N(u_\ell,w_{q+1})
		+
		N(W_{q+1},v_\ell),
	\end{equation*}
		is the linear interaction error, and
	\begin{equation*}
		\begin{aligned}
			\mathcal E_{q+1}^{(c)}
			:=&
			N(u_\ell+W_{q+1},Q_{q+1})
			+
			N(Q_{q+1}^u,v_\ell+w_{q+1})
			\\
			&+
			N(Q_{q+1}^u,Q_{q+1}).
		\end{aligned}
	\end{equation*}
		is the error involving the time corrector $Q_{q+1}$.
	
	The above error terms have zero spatial mean. Indeed, if
	$u_a=\Lambda \mathbb{P}_{\neq0}v_a$, $u_b=\Lambda \mathbb{P}_{\neq0}v_b$ and $\div u_a=\div u_b=0$, then
	\begin{equation*}
		\begin{aligned}
			N(u_a,v_b)+N(u_b,v_a) 
			&=
			(\nabla^{\perp}\cdot v_b)u_a^{\perp}
			+
			(\nabla^{\perp}\cdot v_a)u_b^{\perp}\\
			&=
			\theta_b \mathcal{R} \theta_a
			+\theta_a \mathcal{R} \theta_b,
		\end{aligned}
	\end{equation*}
	where $\theta_a=-\nabla^{\perp}\cdot v_a$ and
	$\theta_b=-\nabla^{\perp}\cdot v_b$. By skew-adjointness of the Riesz
	transforms, we have
	\begin{equation}
		\label{eq:mean-zero-pairing-identity}
		\int_{\mathbb T^2}
		\big(
		N(u_a,v_b)+N(u_b,v_a)
		\big)\,dx=0.
	\end{equation}
	Taking $a=b$ in~\eqref{eq:mean-zero-pairing-identity} also gives
	$\int_{\mathbb T^2}N(Q_{q+1}^u,Q_{q+1})\,dx=0$.
	Thus we define their corresponding Reynolds stress by 
	\begin{equation*}
		\mathring R_{q+1}^{(l)}
		:=
		\mathcal R_0\mathcal E_{q+1}^{(l)},
	\end{equation*}
	and
	\begin{equation*}
		\mathring R_{q+1}^{(c)}
		:=
		\mathcal R_0\mathcal E_{q+1}^{(c)}.
	\end{equation*}
	Meanwhile, since $S_i^k$ and $\widetilde{\Omega}_i^k$ have the same
	spatial average, define the source-replacement error by
	\begin{equation*}
		\mathring R_{q+1}^{(s)}
		:=
		\mathcal R_0\left(\sum_{i,k}(S_i^k-\widetilde{\Omega}_i^k\xi_i) \frac{d}{dt}
		\left[
		\eta_i^k\zeta_i^k(t)
		\bigl(r_i^k(x_i^k(t))\bigr)^{3/2}
		\right]\right).
	\end{equation*}
	
	We now define the $(q+1)$-st pressure:
	\begin{equation}
		\label{eq:pq1-definition-sqg}
		p_{q+1}
		:=
		p_\ell
		+
		\sum_{i,k}P_i^k
		+
		P^d
		-\pi_Q,
	\end{equation}
	and the $(q+1)$-st Reynolds stress:
	\begin{equation}
		\label{eq:Rq1-definition-sqg}
		R_{q+1}
		:=
		\mathring R_{q+1}^{(l)}
		+
		\mathring R_{q+1}^{(c)}
		+
		\mathring R_{q+1}^{(t)}
		+
		\mathring R_{q+1}^{(s)}
		+
		\sum_{i,k}(R_i^k+G_i^k).
	\end{equation}
	Here \(R_i^k\) is extended by zero outside \(\mathcal T_i^k\), as above,
	whereas \(G_i^k(x)\) is understood as the piecewise-in-time tensor
	\(G_i^k(x)\mathbf 1_{\mathcal T^k}(t)\). Thus every sum in
	\eqref{eq:Rq1-definition-sqg} is finite at each time.
	The time-freezing error is defined, for $t\in\mathcal T^k$, by
	\begin{equation*}
		\mathring R_{q+1}^{(t)}(x,t)
		:=
		\sum_{i=1}^4
		\big(a_i^k(x)-a_i(x,t)\big)
		\xi_i\otimes\xi_i .
	\end{equation*}
	Consequently, the quadruple
	$(v_{q+1},u_{q+1},p_{q+1},R_{q+1})$ solves
	\begin{equation*}
		\partial_t v_{q+1}
		+
		u_{q+1}\cdot\nabla v_{q+1}
		-
		(\nabla v_{q+1})^T u_{q+1}
		+
		\nabla p_{q+1}
		=
		\operatorname{div} R_{q+1},
	\end{equation*}
	with
	\begin{equation*}
		\operatorname{div}v_{q+1}=0,
		\qquad
		u_{q+1}=\Lambda \mathbb P_{\neq 0} v_{q+1}.
	\end{equation*}

     \begin{remark}[Locality in time]\label{rem:SQG-time-locality}
	Assume that
	\[
	(v_q,u_q,p_q,R_q)
	\qquad\text{and}\qquad
	(v_q',u_q',p_q',R_q')
	\]
	are two solutions of the $q$-th SQG--Reynolds system satisfying the
	hypotheses of Proposition~\ref{prop:iteration}. Suppose that, for some
	$t_*\geq 1/8$,
	\[
	(v_q,u_q,p_q,R_q)
	=
	(v_q',u_q',p_q',R_q')
	\qquad\text{on }\mathbb T^2\times[0,t_*].
	\]
	Then, by the $(q+1)$-th ansatz \eqref{eq:vq1-definition-sqg}, \eqref{eq:uq1-definition-sqg}, \eqref{eq:pq1-definition-sqg} and \eqref{eq:Rq1-definition-sqg}, we can construct the corresponding $(q+1)$-st stage solutions such that
	\[
	(v_{q+1},u_{q+1},p_{q+1},R_{q+1})
	=
	(v_{q+1}',u_{q+1}',p_{q+1}',R_{q+1}')
	\]
	on
	\[
	\mathbb T^2\times
	[0,t_*-\ell-\tau_{q+1}],
	\]
    where $\tau_{q+1}=\lambda_{q+1}^{-\kappa}\leq \lambda_q^{-1}$ and 
    $\ell= \lambda_0^{-\beta} \lambda_q^{-2n}\delta_{q+1}\leq \lambda_q^{-1}$ is defined in \eqref{def:ell}.
    \end{remark}

	\section{Perturbation and Reynolds-stress estimates}
	This section derives the component estimates used in the target iteration
	statement of Proposition~\ref{prop:iteration}. We first treat the velocity
	bounds, then the six components of the new stress, and finally the parameter
	inequalities.
	
	\subsection{Velocity estimates}
	\leavevmode\par\noindent
	We begin with the $L^\infty_tL^4_x$ norm of $v_{q+1}-v_q$.
	From \eqref{eq:u-mollification-Du}, \eqref{est-Vik}, and
	\eqref{eq:time-corrector-Lp-estimate-sqg},
	\[
	\begin{aligned}
		\|v_{q+1}-v_q\|_{L^\infty_tL^4_x}
		&\le
		\|v_\ell-v_q\|_{L^\infty_tL^4_x}
		+\sup_{i,k}\|V_i^k\|_{L^\infty_tL^4_x}
		+\|Q_{q+1}\|_{L^\infty_tL^4_x}
		\\
		&\le
		\delta_{q+1}^{\frac12}
		+C\delta_{q+1}^{\frac12}
		+C \tau_{q+1} \lambda_{q+1}^{\frac32}
		\lambda_q^{6n+2\beta\sigma}\\
		&\le 
		C\delta_{q+1}^{\frac12}
		+ C\lambda_1^{-\beta}\lambda_{q+1}^{-\kappa+\frac32+\frac{6n}{\sigma}+2\beta+\frac12\beta}\delta_{q+1}^{\frac12}\\
		&\le C\delta_{q+1}^{\frac12},
	\end{aligned}
	\]
	for some universal constant $C>0$, provided 
	\begin{equation*}
	\frac32-\kappa+\frac{6n}{\sigma}+\frac52 \beta\le 0.
	\end{equation*}
	
	Next, 
	\[
	\|v_{q+1}\|_{L^\infty_tL^4_x}
	\le \|v_{q+1}-v_q\|_{L^\infty_tL^4_x}+ \|v_{q}\|_{L^\infty_tL^4_x}
	\le
	2\delta_0^{1/2}-\delta_{q}^{1/2}+M\delta_{q+1}^{\frac12} \le  2\delta_0^{1/2}-\delta_{q+1}^{1/2},
	\]
	provided that $\lambda_0$ is sufficiently large in terms of $M$.
	
	We now estimate the \(L^\infty_tL^{\bar p}\)-norm of \(Dv_{q+1}\). 
	From \eqref{est-Vik} and \eqref{eq:time-corrector-Lp-estimate-sqg}, we obtain
	\begin{align*}
		\|Dv_{q+1}\|_{L^\infty_tL^{\bar p}_x}
		&\le
		\|Dv_\ell\|_{L^\infty_tL^{\bar p}_x}
		+\sup_{i,k} \|D V_i^k\|_{L^\infty_tL^{\bar p}_x}
		+\|DQ_{q+1}\|_{L^\infty_tL^{\bar p}_x}
		\\
		&\le
		\|Dv_q\|_{L^\infty_tL^{\bar p}_x}
		+C\delta_{q+1}^{\frac12-\frac23 s_{\bar p}} r_{q+1}^{-s_{\bar p}}
		+C\tau_{q+1}\lambda_{q+1}^{3-\frac2{\bar p}}\lambda_q^{6n+2\beta\sigma}\\
		&\le 
		\|Dv_q\|_{L^\infty_tL^{\bar p}_x}
		+C\delta_{q+1}^{1/2}
		\bigl(\lambda_{q+1}^{-\mu}\lambda_{q+1}^{-\frac{2}{3}\beta}\lambda_1^{\frac{4}{3}\beta}\bigr)^{-s_{\bar p}}+ 
		C\lambda_{q+1}^{3-\kappa-\frac{2}{\bar{p}}+\frac{6n}{\sigma}+2\beta}\\
		&\le 
		\|Dv_q\|_{L^\infty_tL^{\bar p}_x}
		+
		C\lambda_1^\beta\delta_{q+1}^{1/100}\left(\lambda_{q+1}^{
			\left(\mu+\frac23\beta\right)s_{\bar p}
			-\frac{49}{100}\beta}
		+
		\lambda_{q+1}^{
			\frac{6n}{\sigma}
			+\frac{201}{100}\beta
			-\kappa+3-\frac2{\bar p}}\right)
		\\
		&\le 
		\|Dv_q\|_{L^\infty_tL^{\bar p}_x}+ \lambda_0\delta_{q+1}^{\frac{1}{100}},
	\end{align*}
	
	provided
	\begin{equation*}
		\begin{aligned}
			\beta\sigma<1,\qquad
			\mu s_{\bar{p}}+\frac23 \beta s_{\bar{p}}-\frac{49}{100}\beta < 0,\\
			3-\kappa-\frac{2}{\bar{p}}+\frac{6n}{\sigma}+(2+\frac{1}{100})\beta < 0.
		\end{aligned}
	\end{equation*}
For Hölder continuity in time, by interpolation inequality we have
\begin{align*}
    \|Dv_{q+1}\|_{C_t^\gamma L_x^{\bar p}}
		&\le
		\|Dv_q\|_{C_t^\gamma L_x^{\bar p}}
		+
		\sup_{i,k}\|DV_i^k\|_{C_t^\gamma L_x^{\bar p}}
		+
		\|DQ_{q+1}\|_{C_t^\gamma L_x^{\bar p}}
		\\
		&\le
		\|Dv_q\|_{C_t^\gamma L_x^{\bar p}}
		+
		\sup_{i,k}
		\|DV_i^k\|_{L_t^\infty L_x^{\bar p}}^{1-\gamma}
		\|\partial_tDV_i^k\|_{L_t^\infty L_x^{\bar p}}^\gamma
		\\
		&\qquad
		+
		\|DQ_{q+1}\|_{L_t^\infty L_x^{\bar p}}^{1-\gamma}
		\|\partial_tDQ_{q+1}\|_{L_t^\infty L_x^{\bar p}}^\gamma.
\end{align*}
Then substituting \eqref{est:ptpxV-barp} and
	\eqref{eq:time-corrector-dtD-estimate-sqg} gives
	\begin{align*}
		\|Dv_{q+1}\|_{C_t^\gamma L_x^{\bar p}}
		&\le
		\|Dv_q\|_{C_t^\gamma L_x^{\bar p}}
		+
		C\left(
		\delta_{q+1}^{\frac12-\frac23s_{\bar p}}
		\lambda_{q+1}^{\mu s_{\bar p}}
		\right)^{1-\gamma}
		\left(
		\delta_{q+1}^{\frac12-\frac23s_{\bar p}}
		\lambda_{q+1}^{1+\kappa+\mu s_{\bar p}}
		+
		C\delta_{q+1}^{-\frac23-\frac23s_{\bar p}}
		\lambda_{q+1}^{\mu\left(s_{\bar p}+\frac52\right)}
		\right)^\gamma
		\\
		&\qquad
		+
		C\left(
		\tau_{q+1}
		\lambda_{q+1}^{3-\frac2{\bar p}}
		\lambda_q^{6n+2\beta\sigma}
		\right)^{1-\gamma}
		\left(
		\lambda_{q+1}^{3-\frac2{\bar p}}
		\lambda_q^{6n+2\beta\sigma}
		\right)^\gamma
		\\
		&\le
		\|Dv_q\|_{C_t^\gamma L_x^{\bar p}}
		+
		C\delta_{q+1}^{\frac12-\frac23s_{\bar p}}
		\lambda_{q+1}^{\mu s_{\bar p}+\gamma(1+\kappa)}
		+
		C\delta_{q+1}^{\frac12-\frac23s_{\bar p}-\frac76\gamma}
		\lambda_{q+1}^{\mu s_{\bar p}+\frac52\mu\gamma}
		\\
		&\qquad
		+
		C\tau_{q+1}^{1-\gamma}
		\lambda_{q+1}^{3-\frac2{\bar p}}
		\lambda_q^{6n+2\beta\sigma}
		\\
		&=
		\|Dv_q\|_{C_t^\gamma L_x^{\bar p}}
		+
		C\delta_{q+1}^{\frac12-\frac23s_{\bar p}}
		\lambda_{q+1}^{\mu s_{\bar p}+\gamma(1+\kappa)}
		+
		C\delta_{q+1}^{\frac12-\frac23s_{\bar p}-\frac76\gamma}
		\lambda_{q+1}^{\mu s_{\bar p}+\frac52\mu\gamma}
		\\
		&\qquad
		+
		C\lambda_{q+1}^{
			\frac32+s_{\bar p}
			-\kappa(1-\gamma)
			+\frac{6n}{\sigma}
			+2\beta}\\
		={}&
		\|Dv_q\|_{C_t^\gamma L_x^{\bar p}}
		+
		\delta_{q+1}^{\frac1{100}}
		\Bigg[
		C\lambda_1^{
			\frac{49}{50}\beta-\frac43\beta s_{\bar p}
		}
		\lambda_{q+1}^{
			\mu s_{\bar p}
			+\frac23\beta s_{\bar p}
			-\frac{49}{100}\beta
			+\gamma(1+\kappa)
		}
		\\
		&\qquad\qquad\qquad\qquad
		+
		C\lambda_1^{
			\frac{49}{50}\beta
			-\frac43\beta s_{\bar p}
			-\frac73\beta\gamma
		}
		\lambda_{q+1}^{
			\mu s_{\bar p}
			+\frac23\beta s_{\bar p}
			-\frac{49}{100}\beta
			+\gamma\left(\frac52\mu+\frac76\beta\right)
		}
		\\
		&\qquad\qquad\qquad\qquad
		+
		C\lambda_1^{-\frac{\beta}{50}}
		\lambda_{q+1}^{
			\frac32+s_{\bar p}
			-\kappa(1-\gamma)
			+\frac{6n}{\sigma}
			+\left(2+\frac1{100}\right)\beta
		}
		\Bigg].
	\end{align*}
	For the fixed value $\gamma=10^{-5}$, direct substitution of the
	parameters in~\eqref{eq:admissible-parameters} verifies that the bracketed terms are
	bounded by $\lambda_1^\beta$.

 \noindent For the increment $\|D(v_{q+1}-v_q)\|_{C_t^\gamma L_x^{\bar p}}$,
    by convolution estimates and \eqref{eq:ind-A}, we have
         \begin{align*}
            \|D(v_{q+1}-v_q)\|_{C_t^\gamma L_x^{\bar p}} 
             \leq &
             \|D(v_\ell-v_q)\|_{C_t^\gamma L_x^{\bar p}} 
             +
             \sup_{i,k}\|DV_i^k\|_{C_t^\gamma L_x^{\bar p}}
		      +
		\|DQ_{q+1}\|_{C_t^\gamma L_x^{\bar p}}
        \\
        \leq&
        \|D_x(v_\ell-v_q)\|^{1-\gamma}_{L^\infty_t L_x^{\bar p}} \|D_t D_x(v_\ell-v_q)\|^{\gamma}_{L^\infty_t L_x^{\bar p}} 
        + \lambda_1^{\beta} \delta_{q+1}^{\frac{1}{100}}
        \\
         \leq&
         2\bigl(\ell\|D_{t,x}D_x v_q\|_{L^\infty_t L_x^{\bar p}} \bigr)^{1-\gamma}
         \|D_t D_x v_q\|^{\gamma}_{L^\infty_t L_x^{\bar p}} 
         + \lambda_1^{\beta} \delta_{q+1}^{\frac{1}{100}}
         \\
         \leq &
         C  \ell^{1-\gamma} \lambda_q^n 
         +\lambda_1^{\beta} \delta_{q+1}^{\frac{1}{100}}\\
         \leq &
         C\lambda_0^{-\beta(1-\gamma)-2n(1-2\gamma)}
         \delta_{q+1}^{
         	1-\gamma+\frac{n(1-2\gamma)}{\sigma\beta}
         }
         +\lambda_1^\beta\delta_{q+1}^{\frac1{100}}\\
         \leq &
         2\lambda_1^\beta\delta_{q+1}^{\frac1{100}}.
         \end{align*}

	Next, we estimate the $L^\infty_tL^{4}_x$-norm of
	$D_{x,t}u_{q+1}$. From \eqref{est-Uik} and \eqref{eq:Q-u-estimate}, we have
	\[
	\begin{aligned}
		\|D_x u_{q+1}\|_{L^\infty_tL^{4}_x}
		&\le
		\|D_x u_\ell\|_{L^\infty_tL^{4}_x}
		+\sup_{i,k} \|D_x U_i^k\|_{L^\infty_t L^{4}_x}
		+\|D_x Q^u_{q+1}\|_{L^\infty_t L^{4}_x}
		\\
		&\le
		\lambda_{q+1}^{\frac{n}{\sigma}}
		+C\delta_{q+1}^{-\frac56}r_{q+1}^{-2}
		+C \tau_{q+1} \lambda_{q+1}^{\frac72}
		\lambda_q^{6n+2\beta\sigma}\\
		&\le \frac{1}{20}\lambda_{q+1}^n
		+C\lambda_1^{-\frac{5\beta}{3}}\lambda_{q+1}^{2\mu+\frac{5}{6}\beta}
		+C\lambda_{q+1}^{\frac72-\kappa+2\beta+\frac{6n}{\sigma}}\\
		&\le \frac14\lambda_{q+1}^n,
	\end{aligned}
	\]
	provided
	\begin{equation*}
	2\mu+\frac{5}{6}\beta<n,\quad  \frac72-\kappa+2\beta+\frac{6n}{\sigma}<n.
	\end{equation*}
	From \eqref{est-ptVik} and \eqref{eq:time-corrector-dtD-estimate-sqg}, we have
	\[
	\begin{aligned}
		\|D_t u_{q+1}\|_{L^\infty_tL^{4}_x}
		&\le
		\|D_t u_\ell\|_{L^\infty_tL^{4}_x}
		+\sup_{i,k} \|D_t U_i^k\|_{L^\infty_tL^{4}_x}
		+\|D_tQ^u_{q+1}\|_{L^\infty_tL^{4}_x}
		\\
		&\leq
		\frac{1}{20}\lambda_{q+1}^n+C \left(\delta_{q+1}^{-\frac16}\lambda_{q+1}^{1+\kappa+\mu}
		+ \delta_{q+1}^{-\frac43}\lambda_{q+1}^{\frac72\mu}
		\right)
		+ 
		C\lambda_{q+1}^{\frac52+\frac{6n}{\sigma}+2\beta}
		\\
		&\le \frac14\lambda_{q+1}^n,
	\end{aligned}
	\]
	provided
	\begin{equation*}
	\max\{\kappa+1+\mu+\frac16\beta,\quad\frac43\beta+\frac72\mu,\quad \frac52+\frac{6n}{\sigma}+2\beta \} <n.
	\end{equation*}
	With the above assumptions on the parameters, we can similarly estimate
    \[
    \begin{aligned}
        \|D_x v_{q+1}\|_{L^\infty_tL^{4}_x}
        &\leq
        \|D_x v_\ell\|_{L^\infty_tL^{4}_x}
		+\sup_{i,k} \|D_x V_i^k\|_{L^\infty_t L^{4}_x}
		+\|D_x Q_{q+1}\|_{L^\infty_t L^{4}_x}
		\\
        &\leq
        \lambda_{q+1}^{n / \sigma}+C \lambda_1^{-\beta / 3} \lambda_{q+1}^{\mu+\beta / 6}+C \lambda_{q+1}^{\frac{5}{2}-\kappa+\frac{6 n}{\sigma}+2 \beta}
        \\
        &\leq \frac14\lambda_{q+1}^n,
    \end{aligned}
    \]
and   
     \[
     \begin{aligned}
        \|D_t v_{q+1}\|_{L^\infty_tL^{4}_x}
		&\le
		\|D_t v_\ell\|_{L^\infty_tL^{4}_x}
		+\sup_{i,k} \|D_t V_i^k\|_{L^\infty_tL^{4}_x}
		+\|D_t Q_{q+1}\|_{L^\infty_tL^{4}_x}\\
        &\leq 
        \lambda_q^n+C \delta_{q+1}^{1 / 2} \lambda_{q+1}^{1+\kappa}+C \delta_{q+1}^{-2 / 3} r_{q+1}^{-5 / 2}+C \lambda_{q+1}^{3 / 2} \lambda_q^{6 n+2 \beta \sigma} \\
        &\le
         \frac14\lambda_{q+1}^n.
         \end{aligned}
     \]

	\subsection{Reynolds-stress estimates}
	\leavevmode\par\noindent
	We now estimate each component of the new Reynolds stress. Recall that
	\begin{equation*}
		R_{q+1}
		:=
		\mathring R_{q+1}^{(l)}
		+
		\mathring R_{q+1}^{(c)}
		+
		\mathring R_{q+1}^{(t)}
		+
		\mathring R_{q+1}^{(s)}
		+
		\sum_{i,k}(R_i^k+G_i^k)
	\end{equation*}
		By Lemma~\ref{est:bilinear}, for $p>\frac43$, we estimate
	\[
	\begin{aligned}
		\|\mathring R^{(l)}_{q+1}\|_{L^\infty_t L^1_x}
		&\lesssim_{p} \|\mathcal{R}_0\left(N(u_\ell,w_{q+1})+N(W_{q+1},v_\ell)\right)\|_{L^\infty_t L^{\frac{4p}{4+p}}_x}\\
		&\lesssim_{p} \|u_\ell\|_{L^\infty_t L^4_x}
		\sup_{i,k}\|V_i^k\|_{L^\infty_t L^{p}_x}\\
		&\lesssim_{p}\lambda_q^n \delta_{q+1}^{\frac12} \lambda_{q+1}^{-\frac{1-s_{p}}{\alpha}}\\
		&\lesssim_{p}\lambda_1^{-\beta}\delta_{q+2} 
		\lambda_{q+1}^{\frac{n}{\sigma}+\beta\sigma-\frac{\beta}{2}-\frac{1-s_{p}}{\alpha}}\\
		&\leq \frac{1}{10}\delta_{q+2},
	\end{aligned}
	\]
	provided
	\begin{equation*}
	\frac{1-s_p}{\alpha}+\frac{\beta}{2}-\frac{n}{\sigma}>\beta \sigma,
	\end{equation*}
	with $p=2$ and  $s_p=\frac12$.

	Similarly, for $p>\frac43$, by decomposing
	\[
	\begin{aligned}
	\mathring R^{(c)}_{q+1}
	&=\mathcal R_0\!\left(N(u_\ell,Q_{q+1})+N(Q^u_{q+1},v_\ell)\right)\\
	&\quad+\mathcal R_0\!\left(N(W_{q+1},Q_{q+1})
	+N(Q^u_{q+1},w_{q+1})\right)\\
	&\quad+\mathcal R_0N(Q^u_{q+1},Q_{q+1}),
	\end{aligned}
	\]
	we shall estimate
	\[
	\begin{aligned}
		\|\mathring R^{(c)}_{q+1}\|_{L^\infty_t L^1_x}
		&\lesssim_{p}  \|u_\ell\|_{L^4_x}\|Q_{q+1}\|_{L^\infty_t L^{p}_x}
		+\|W_{q+1}\|_{L^\infty_t L^{p}_x}
		\|Q_{q+1}\|_{L^\infty_t L^{4}_x}
		+
		\|Q_{q+1}\|_{L^\infty_t L^{4}_x}\|DQ_{q+1}\|_{L^\infty_t L^{p}_x}
		\\
		&\lesssim_{p}  \lambda_{q+1}^{\frac{n}{\sigma}}
		\tau_{q+1} \lambda_{q+1}^{2-\frac{2}{p}}
		\lambda_q^{6n+2\beta\sigma}\\
		&\quad+\delta_{q+1}^{\frac12-\frac23 s_{p}}r_{q+1}^{-s_{p}}
		\tau_{q+1}\lambda_{q+1}^{3/2}\lambda_q^{6n+2\beta\sigma}\\
		&\quad+\tau_{q+1}\lambda_{q+1}^{3/2}\lambda_q^{6n+2\beta\sigma}
		\cdot\tau_{q+1}\lambda_{q+1}^{3-\frac2p}
		\lambda_q^{6n+2\beta\sigma}\\
		&\lesssim_{p} \lambda_{q+1}^{\gamma_1}\delta_{q+2}+
		\lambda_{q+1}^{\gamma_2}\delta_{q+2}+
		\lambda_{q+1}^{\gamma_3}\delta_{q+2}\\
		& \leq \frac{1}{10}\delta_{q+2},
	\end{aligned}
	\]
	provided
    \begin{equation*}
      \begin{aligned}
          &\gamma_1=\frac{7n}{\sigma}-\kappa+2-\frac{2}{p}+2\beta+\beta\sigma <0,
          \\
          &\gamma_2=\mu s_{p}-\kappa+\frac32+\frac{6n}{\sigma}+(\frac32+\frac23 s_{p})\beta+ \beta\sigma<0,
          \\
          &\gamma_3=-2\kappa+ \frac92-\frac{2}{p}+\frac{12n}{\sigma}+4\beta+ \beta\sigma<0,
       \end{aligned}
    \end{equation*}
	with $p=\frac85$ and  $s_p=\frac14$.
	\\
	For the time-freezing error, using \eqref{est-time-error-a} and summing
	over the four directions, we obtain
	\[
	\begin{aligned}
	\|\mathring R^{(t)}_{q+1}\|_{L^\infty_tL^1_x}
	&\leq
	\sum_{i=1}^4\sup_{k\in\mathbb N}\sup_{t\in\mathcal T^k}
	\|a_i^k-a_i(\cdot,t)\|_{L_x^1}\\
	&\leq
	4C\tau_{q+1}\lambda_q^{6n+2\beta\sigma}\\
	&\leq  C\lambda_1^{-2\beta}\delta_{q+2}
	\lambda_{q+1}^{\beta\sigma-\kappa+\frac{6n}{\sigma}+2\beta}
	\leq \frac{1}{10}\delta_{q+2},
	\end{aligned}
	\]
	provided
	\begin{equation*}
	\kappa-2\beta-\frac{6n}{\sigma}>\beta\sigma.
	\end{equation*}
	For the auxiliary-average error, at each time there is a unique active
	coarse interval \(\mathcal T^k\). Hence \eqref{eq:Gik-L1-estimate-sqg}
	and \eqref{est-aik} give
	\[
	\begin{aligned}
	\left\|\sum_{i,k}G_i^k\right\|_{L^\infty_tL^1_x}
	&=\sup_{k\in\mathbb N}\left\|\sum_{i=1}^4G_i^k\right\|_{L_x^1}\\
	&\leq 4C\sup_{i,k}
	\frac{\|a_i^k\|_{C^1_x}}{\lambda_{q+1}}\\
	&\leq
	C\lambda_1^{-2\beta}
	\lambda_{q+1}^{-1+\frac{6n}{\sigma}+2\beta+\beta\sigma}
	\delta_{q+2}
	\leq \frac{1}{10}\delta_{q+2}.
	\end{aligned}
	\]
	provided
	\begin{equation*}
	1-2\beta-\frac{6n}{\sigma}>\beta\sigma.
	\end{equation*}
	
	We next estimate the principal building-block error. Since the intervals
	\(\mathcal T_i^k\) are pairwise disjoint and every \(R_i^k\) is extended by
	zero outside \(\mathcal T_i^k\), at most one such tensor is nonzero at each
	time. Therefore
	\[
	\begin{aligned}
		\left\|\sum_{i,k}R_i^k\right\|_{L^\infty_t L^1_x}
		&\leq \sup_{i,k}\|R_i^k\|_{L^\infty_t L^1_x}\\
		&\leq  C_p\delta_{q+1}r_{q+1}^{\frac2p-1}
		\lambda_q^{6n+3\beta\sigma+
			\left(\frac2p-1\right)
			\left(\frac{8n}{3}
			+
			\frac{2\beta\sigma}{3}
			\right)}\\
		&\leq C_p
		\delta_{q+2}
		\lambda_{q+1}^{
			\frac{10p+16}{3p}\frac{n}{\sigma}
			+\beta\sigma
			+\frac{4(p+1)}{3p}\beta
			+\frac{p-2}{p}\mu}
		\leq \frac{1}{10}\delta_{q+2},
	\end{aligned}
	\]
	provided
	\begin{equation*}
	\frac{10p+16}{3p}\frac{n}{\sigma}
	+\beta\sigma
	+\frac{4(p+1)}{3p}\beta
	+\frac{p-2}{p}\mu<0 \quad\text{with}\quad p=\frac{11}{10}.
	\end{equation*}

	For the source-replacement term, by \eqref{eq:SQG-support-condition}, we have
	\[
	\begin{aligned}
		\|\mathring R_{q+1}^{(s)}\|_{L^\infty_t L^1_x}
		&\le C_p \lambda_{q+1}^{-1}\sup_{i,k}
		\left\{\left(\|S_i^k\|_{L^\infty_t L^{p}_x}
		+\|\widetilde{\Omega}_i^k\|_{L^\infty_t L^{p}_x}\right)
		\sup_t \left|\frac{d}{dt}
		\left[
		\eta_i^k\zeta_i^k(t)
		\bigl(r_i^k(x_i^k(t))\bigr)^{3/2}
		\right]\right|\right\}\\
		&\leq C_p \lambda_{q+1}^{-1}  
		r_{q+1}^{\frac{2}{p}-2}\delta_{q+1}^{\frac{4}{3p}-\frac43}
		\lambda_q^{6n+2\beta\sigma}\\
		&\leq C_p \delta_{q+2}
		\lambda_1^{-\frac{2\beta(7p-4)}{3p}}
		\lambda_{q+1}^{
			\frac{6n}{\sigma}
			+\beta\sigma
			+\frac{10p-4}{3p}\beta
			+\frac{2(p-1)}{p}\mu-1}
		\\
		& \leq \frac{1}{10} \delta_{q+2},
	\end{aligned}
	\]
	provided 
	\begin{equation*}
	\frac{6n}{\sigma}
	+\beta\sigma
	+\frac{10p-4}{3p}\beta
	+\frac{2(p-1)}{p}\mu-1<0,\quad\text{with} \quad p=\frac{21}{20}.
	\end{equation*}
	
	\subsection{Parameter compatibility}
	\label{subsec:parameter-compatibility}
	
	We collect here all the restrictions on the parameters used in the iteration.
	Recall that
	\begin{equation*}
		\lambda_{q+1}=\lambda_q^\sigma,
		\qquad
		\delta_q=\lambda_1^{2\beta}\lambda_q^{-\beta},
		\qquad
		r_{q+1}=\lambda_{q+1}^{-\mu},
		\qquad
		\tau_{q+1}=\lambda_{q+1}^{-\kappa}.
	\end{equation*}
	We require
	\begin{equation*}
		n,\sigma,\kappa\in\mathbb N_{\geq1},
		\qquad
		\beta,\mu,\gamma>0,
		\qquad
		0<\alpha<1,
		\qquad
		\frac43<\bar p<4,
	\end{equation*}
	and set
	\begin{equation*}
		s_{\bar p}:=\frac32-\frac2{\bar p}>0.
	\end{equation*}
	
	The parameter conditions are the following:
	\begin{enumerate}
		\item The mollification estimate requires
		\begin{equation*}
			n\geq\frac12\beta\sigma.
		\end{equation*}
		
		\item The spatial localization of the source cores requires
		\begin{equation*}
			\frac{8n}{3\sigma}+\frac23\beta
			<
			\mu-\frac1\alpha.
		\end{equation*}
		
		\item The time-periodicity argument requires
		\begin{equation*}
			\frac32\mu-\beta-2-\kappa>0.
		\end{equation*}
		
		\item The estimates involving the time derivative of the building blocks require
		\begin{equation*}
			\mu>\frac{6n}{\sigma}+3\beta.
		\end{equation*}
		
		\item The \(L_t^\infty L_x^4\) estimate for the velocity increment requires
		\begin{equation*}
			\frac32-\kappa+\frac{6n}{\sigma}+\frac52\beta
			\leq0.
		\end{equation*}
		
		\item The \(L_t^\infty L_x^{\bar p}\) estimate for the derivative of the
		potential velocity requires
		\begin{align}
			\beta\sigma&<1,\notag
\\
			\left(\mu+\frac23\beta\right)s_{\bar p}
			-\frac{49}{100}\beta
			&<0,
			\label{eq:param-pbar-main}\\
			3-\kappa-\frac2{\bar p}
			+\frac{6n}{\sigma}
			+\frac{201}{100}\beta
			&<0.\notag
		\end{align}
		
		\item The \(C_t^\gamma L_x^{\bar p}\) estimate requires
		\begin{align}
			\left(\mu+\frac23\beta\right)s_{\bar p}
			-\frac{49}{100}\beta
			+\gamma(1+\kappa)
			&<0,\notag
\\
			\left(\mu+\frac23\beta\right)s_{\bar p}
			-\frac{49}{100}\beta
			+\gamma\left(\frac52\mu+\frac76\beta\right)
			&<0,\notag
\\
			\frac32+s_{\bar p}
			-\kappa(1-\gamma)
			+\frac{6n}{\sigma}
			+\frac{201}{100}\beta
			&<0.
			\label{eq:param-holder-3}
		\end{align}
		
		\item The estimates for the first space-time derivatives of \(u_{q+1}\)
		and \(v_{q+1}\) require
		\begin{align*}
			2\mu+\frac56\beta
			&<n,
			\\
			\frac72-\kappa+2\beta+\frac{6n}{\sigma}
			&<n,
			\\
			\kappa+1+\mu+\frac16\beta
			&<n,
			\\
			\frac43\beta+\frac72\mu
			&<n,
			\\
			\frac52+\frac{6n}{\sigma}+2\beta
			&<n.
		\end{align*}
		
		\item The linear interaction error requires
		\begin{equation*}
			\frac1{2\alpha}+\frac{\beta}{2}-\frac n\sigma
			>
			\beta\sigma.
		\end{equation*}
		
		\item The corrector interaction error requires
		\begin{align*}
			\frac{7n}{\sigma}-\kappa+\frac34
			+2\beta+\beta\sigma
			&<0,
			\\
			\frac14\mu-\kappa+\frac32+\frac{6n}{\sigma}
			+\frac53\beta+\beta\sigma
			&<0,
			\\
			-2\kappa+\frac{13}{4}+\frac{12n}{\sigma}
			+4\beta+\beta\sigma
			&<0.
		\end{align*}
		
		\item The time-freezing error requires
		\begin{equation*}
			\kappa-2\beta-\frac{6n}{\sigma}
			>
			\beta\sigma.
		\end{equation*}
		
		\item The auxiliary-average error requires
		\begin{equation*}
			1-2\beta-\frac{6n}{\sigma}
			>
			\beta\sigma.
		\end{equation*}
		
		\item The principal building-block error requires
		\begin{equation*}
			\frac{90}{11}\frac n\sigma
			+\beta\sigma
			+\frac{28}{11}\beta
			-\frac9{11}\mu
			<0.
		\end{equation*}
		
		\item The source-replacement error requires
		\begin{equation*}
			\frac{6n}{\sigma}
			+\beta\sigma
			+\frac{130}{63}\beta
			+\frac{2}{21}\mu
			<1.
		\end{equation*}
		
	\end{enumerate}
	The conditions listed above are mutually compatible. More precisely, one may
	choose
	\begin{equation}\label{eq:admissible-parameters}
		\boxed{
			n=14,\qquad
			\sigma=250,\qquad
			\kappa=3,\qquad
			\beta=\frac1{1000},\qquad
			\alpha=\frac45,\qquad
			\mu=\frac{17}{5}.
		}
	\end{equation}
	Combining \eqref{eq:param-pbar-main}-\eqref{eq:param-holder-3}, the small quantity $s_{\bar{p}}$ satisfies
    \[
s_{\bar p}
<
\min\left\{
\frac{\frac{49}{100}\beta-\gamma(1+\kappa)}
{\mu+\frac23\beta},
\frac{\frac{49}{100}\beta
-\gamma\left(\frac52\mu+\frac76\beta\right)}
{\mu+\frac23\beta},
\kappa(1-\gamma)-\frac32-\frac{6n}{\sigma}
-\frac{201}{100}\beta
\right\}.
\]
Substituting the above parameter values, we may choose the following:
	\begin{equation*}
		\boxed{
			\bar p=\frac{400003}{300000}
			=\frac43+\frac1{100000},
			\qquad
			\gamma=\frac1{100000}.
		}
	\end{equation*}
	After all these parameters have been fixed, $\lambda_0\in\mathbb N$ may be
	chosen sufficiently large, depending only on the fixed parameters and the
	constants appearing in the preceding estimates.

	Direct substitution verifies every strict inequality listed above.
	The four-direction factors in the time-freezing and auxiliary-average
	estimates are fixed constants. Since all the powers of \(\lambda_{q+1}\)
	appearing in the six stress estimates are strictly negative, these and all
	other fixed constants can be absorbed by increasing \(\lambda_0\), so that
	each entire stress group is bounded by \(\delta_{q+2}/10\).
	 Assuming that estimate, the triangle inequality and
	\eqref{eq:Rq1-definition-sqg} give, for the piecewise-in-time construction,
	\[
	\begin{aligned}
	\|R_{q+1}\|_{L_t^\infty L_x^1}
	&\leq
	\|\mathring R_{q+1}^{(l)}\|_{L_t^\infty L_x^1}
	+\|\mathring R_{q+1}^{(c)}\|_{L_t^\infty L_x^1}
	+\|\mathring R_{q+1}^{(t)}\|_{L_t^\infty L_x^1}
	+\|\mathring R_{q+1}^{(s)}\|_{L_t^\infty L_x^1}\\
	&\quad
	+\left\|\sum_{i,k}R_i^k\right\|_{L_t^\infty L_x^1}
	+\left\|\sum_{i,k}G_i^k\right\|_{L_t^\infty L_x^1}\\
	&\leq 6\,\frac{\delta_{q+2}}{10}
	<\delta_{q+2}.
	\end{aligned}
	\]

	\appendix
    \section{Auxiliary analytic estimates}
	We collect here the two analytic tools used repeatedly in the iteration:
	a localized symmetric anti-divergence estimate and a bilinear null-form
	estimate for the SQG interaction.
	
	\subsection{Symmetric anti-divergence}
	
	The anti-divergence operator converts mean-zero vector errors into symmetric
	Reynolds stresses. Its localized form also supplies a small factor equal to
	the diameter of the support.
	
	On the torus $\mathbb{T}^2$, we consider the operator
	\begin{equation*}
		\mathcal R_0v
		:=
		\nabla\Delta^{-1}v
		+(\nabla\Delta^{-1}v)^T
		-(\operatorname{div}\Delta^{-1}v)I.
	\end{equation*}
	for every $v\in C^\infty(\mathbb{T}^2;\mathbb{R}^2)$ such that
	$\int_{\mathbb{T}^2} v(x)\,dx=0$.
	This operator maps mean-zero vector fields to symmetric tensors:
	\begin{equation*}
		\mathcal{R}_0 : C^\infty_{0}(\mathbb{T}^2; \mathbb{R}^2)
		\to C^\infty (\mathbb{T}^2; \operatorname{Sym}_2),
	\end{equation*}
	where $\operatorname{Sym}_2$ is the space of symmetric tensors in
	$\mathbb{R}^2$ and the subscript $0$ denotes zero spatial mean. A
	direct calculation gives
	$\operatorname{div}(\mathcal R_0v)=v$, while $D\mathcal R_0$ and
	$\mathcal R_0\operatorname{div}$ are Calderón--Zygmund operators. Thus, for
	every $1<p<\infty$,
	\begin{align*}
		\|\mathcal R_0v\|_{W^{1,p}}
		&\le C_p\|v\|_{L^p},\\
		\|\mathcal R_0\operatorname{div}A\|_{L^p}
		&\le C_p\|A\|_{L^p}.
	\end{align*}
	The mean-zero condition gives the following scale-localized estimate.
	
	\begin{proposition}[Localized \(L^p\) estimate for the symmetric anti-divergence]
		\label{prop:antidiv-compact}
		Let $0<r<1/4$. Assume that $v\in C^\infty_c(\mathbb{T}^2;\mathbb{R}^2)$
		is supported in a ball of radius $r$ and
		$\int_{\mathbb{T}^2}v(x)\,dx=0$. Then
		\begin{equation*}
			\|\mathcal{R}_0(v)\|_{L^p} \le C(p) r \| v \|_{L^p} \, ,
			\quad \text{for every $p\in (1,\infty)$}\, .
		\end{equation*}  
	\end{proposition}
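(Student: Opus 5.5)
We prove the estimate by duality, using the mean-zero condition on $v$ to convert one factor of $r$ from the support into a Poincaré-type gain. Translate so that $\operatorname{supp}v\subset B_r(0)$. Write $\mathcal R_0 v=K*v$, where $K$ is the (matrix-valued) convolution kernel of $\mathcal R_0$ on $\mathbb T^2$. The operator $\mathcal R_0$ is built from $\nabla\Delta^{-1}$, so $K$ is smooth away from the origin and homogeneous of degree $-1$ near it:
\[
|K(x)|\lesssim |x|^{-1},\qquad |\nabla K(x)|\lesssim |x|^{-2}\qquad\text{for }0<|x|\le 1/2 .
\]
On the periodic cell, $K$ is also smooth away from $0$ with bounded derivatives. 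We split the estimate of $\mathcal R_0v$ into a near region $B_{2r}$ and a far region $\mathbb T^2\setminus B_{2r}$.

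\emph{Near region.} For $|x|\le 2r$ we bound $|K*v|\le (|K|\mathbf 1_{B_{3r}})*|v|$, since only $|x-y|\le 3r$ contributes. Young's inequality gives
\[
\|\mathcal R_0 v\|_{L^p(B_{2r})}\le \|K\|_{L^1(B_{3r})}\|v\|_{L^p}\lesssim r\|v\|_{L^p},
\]
because $\int_{B_{3r}}|x|^{-1}\,dx\sim r$.

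\emph{Far region.} For $|x|\ge 2r$ we use $\int v=0$ to write
\[
\mathcal R_0v(x)=\int_{B_r}\bigl(K(x-y)-K(x)\bigr)v(y)\,dy .
\]
The mean value theorem gives $|K(x-y)-K(x)|\lesssim r|x|^{-2}$ when $2r\le|x|\le1/2$, and $\lesssim r$ elsewhere on the torus. Hence
\[
|\mathcal R_0v(x)|\lesssim r\,\min(|x|^{-2},1)\,\|v\|_{L^1}\le r\,\min(|x|^{-2},1)\, r^{2-2/p}\|v\|_{L^p},
\]
using Hölder's inequality on $B_r$. Taking the $L^p$ norm over $|x|\ge 2r$,
\[
\bigl\|\,|x|^{-2}\bigr\|_{L^p(|x|\ge 2r)}\sim r^{-2+2/p}\qquad(p>1),
\]
so the far contribution is $\lesssim r\cdot r^{2-2/p}\cdot r^{-2+2/p}\|v\|_{L^p}=r\|v\|_{L^p}$. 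The bounded part of the torus contributes $r\cdot r^{2-2/p}\|v\|_{L^p}\le r\|v\|_{L^p}$.

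The main point needing care is the kernel description of $\mathcal R_0$ on $\mathbb T^2$. The periodic Green's function is $\Delta^{-1}$ on mean-zero functions, equal to $\frac1{2\pi}\log|x|$ plus a smooth function plus a constant. Hence $\nabla\Delta^{-1}$ has kernel $\frac{x}{2\pi|x|^2}+\text{smooth}$, and $\mathcal R_0$ applied to $v$ is a fixed linear combination of such first-order kernels acting on the components of $v$. This yields the stated pointwise bounds for $K$ and $\nabla K$. Note that the constant-mode issue does not arise, since $v$ has mean zero. Summing the two regions proves the proposition with $C(p)$ depending only on $p$. The constant blows up as $p\to1$ through the far-region integral, which matches the restriction $p\in(1,\infty)$.
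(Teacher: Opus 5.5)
Your proof is correct, but it takes a different route from the paper.

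The paper argues by duality. It pairs $\mathcal R_0 v$ with a test field $G$ and uses $\int v=0$ to subtract from $\mathcal R_0^*G$ its average over the supporting ball. The factor $r$ then comes from the scaled Poincar\'e inequality on that ball. The argument closes with the $L^{p'}$-boundedness of the order-zero Calder\'on--Zygmund operator $\nabla\mathcal R_0^*$.

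You work instead in physical space with the kernel $K$ of $\mathcal R_0$. Since $\mathcal R_0$ has order $-1$, $K$ is locally integrable. The near region is therefore handled by Young's inequality with $\|K\|_{L^1(B_{3r})}\sim r$. The far region is handled by the dipole cancellation $K(x-y)-K(x)=O(r|x|^{-2})$.

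The paper's argument is shorter and never needs an explicit description of the periodic kernel. However, it imports singular-integral theory and is intrinsically limited to $1<p<\infty$, since it requires $1<p'<\infty$.

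Your argument uses only the size and smoothness bounds $|K|\lesssim|x|^{-1}$ and $|\nabla K|\lesssim|x|^{-2}$. It makes the mechanism explicit: outside $B_{2r}$ the field decays like a dipole, $|\mathcal R_0v(x)|\lesssim r^{3-2/p}|x|^{-2}\|v\|_{L^p}$. It also works verbatim for $p=\infty$. The only breakdown is the logarithmic divergence of the far-field integral at $p=1$, as you note.

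One small point deserves a cleaner statement: $|x|$ should be read as the distance to $\mathbb Z^2$. For $|y|\le r\le |x|/2$, every point of the segment from $x$ to $x-y$ stays at distance at least $|x|/2$ from the lattice, and this is what the mean value step actually uses. Your derivative bound was stated only on $|x|\le 1/2$, while the segment can leave that ball. The global bound $|\nabla K(z)|\lesssim \operatorname{dist}(z,\mathbb Z^2)^{-2}$ on the torus covers this.
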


	\begin{proof}
		Let \(p'=p/(p-1)\), and denote the supporting ball by \(B\).
		Since \(r<1/4\), the ball \(B\) is contained in a Euclidean coordinate
		chart of \(\mathbb T^2\), with constants independent of \(r\). Let
		\(\mathcal R_0^*\) denote the \(L^2\)-adjoint of \(\mathcal R_0\).
		For every smooth matrix field \(G\), the support and zero-mean
		assumptions on \(v\) give
		\[
		\begin{aligned}
			\left|
			\int_{\mathbb T^2}\mathcal R_0v:G\,dx
			\right|
			&=
			\left|
			\int_B v\cdot
			\left(
				\mathcal R_0^*G
				-\fint_B\mathcal R_0^*G\,dx
			\right)\,dx
			\right|\\
			&\leq
			\|v\|_{L^p(B)}
			\left\|
				\mathcal R_0^*G
				-\fint_B\mathcal R_0^*G\,dx
			\right\|_{L^{p'}(B)}\\
			&\lesssim_p
			r\|v\|_{L^p(\mathbb T^2)}
			\|\nabla\mathcal R_0^*G\|_{L^{p'}(B)}\\
			&\lesssim_p
			r\|v\|_{L^p(\mathbb T^2)}
			\|G\|_{L^{p'}(\mathbb T^2)}.
		\end{aligned}
		\]
		The third line is the scaled Poincaré inequality on \(B\). The last
		follows because each component of \(\nabla\mathcal R_0^*\) is an
		order-zero periodic Calderón--Zygmund operator: up to a sign, its
		multiplier is the adjoint of the corresponding multiplier of
		\(D\mathcal R_0\). Taking the dual supremum proves the claim.
	\end{proof}

	\subsection{Bilinear null-form estimate}
	
	The following estimate exploits the symmetric SQG interaction to avoid
	placing a full derivative on the concentrated perturbation. The argument
	below reduces the periodic estimate to the bilinear Calder\'on commutator
	bound~\eqref{eq:bilinear-Calderon-commutator}, which is an explicit analytic
	input to the conditional scheme.
	
	\begin{lemma}[A bilinear null-form estimate]\label{est:bilinear}
		Let $\mathcal R_0$ be the standard symmetric anti-divergence
		operator on $\mathbb T^2$, namely
		\[
		(\mathcal R_0 h)_{ij}
		:=
		\partial_i\Delta^{-1}h_j
		+
		\partial_j\Delta^{-1}h_i
		-
		\delta_{ij}\partial_\ell\Delta^{-1}h_\ell,
		\]
		where $\Delta^{-1}$ is defined on mean-zero functions. Let
		\[
		\Lambda:=(-\Delta)^{1/2},
		\qquad
		\mathcal R:=\nabla\Lambda^{-1},
		\]
		with the zero Fourier mode set equal to zero.
		
		Assume that
		\[
		1<p_1,p_2<\infty,
		\qquad
		\frac1s=\frac1{p_1}+\frac1{p_2}<1.
		\]
		Then, for every pair of smooth mean-zero scalar functions
		$f,g\in C^\infty(\mathbb T^2)$, one has
		\[
		\left\|
		\mathcal R_0\bigl(g\mathcal R f+f\mathcal R g\bigr)
		\right\|_{L^s(\mathbb T^2)}
		\lesssim_{p_1,p_2}
		\|f\|_{L^{p_1}(\mathbb T^2)}
		\|\Lambda^{-1}g\|_{L^{p_2}(\mathbb T^2)}.
		\]
		The same estimate holds with $\mathcal R$ replaced by
		$\mathcal R^\perp=\nabla^\perp\Lambda^{-1}$.
	\end{lemma}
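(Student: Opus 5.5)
The plan is to substitute $h:=\Lambda^{-1}g$, so that $g=\Lambda h$ and the right-hand side becomes $\|f\|_{L^{p_1}}\|h\|_{L^{p_2}}$. I will then show that the bilinear expression $B(f,h):=\Lambda h\,\mathcal R f+f\,\mathcal R\Lambda h=\Lambda h\,\mathcal R f+f\nabla h$ has a divergence (null) structure. Concretely, I aim to write
\[
B(f,h)_k=\partial_j\,T_{jk}(f,h)
\]
with $T_{jk}$ bilinear operators of order zero. Once this holds, $\mathcal R_0 B=\mathcal R_0\partial_jT_{j\cdot}(f,h)$, and $\mathcal R_0\operatorname{div}$ is a Calder\'on--Zygmund operator bounded on $L^s$ for $1<s<\infty$. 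The estimate then reduces to $\|T_{jk}(f,h)\|_{L^s}\lesssim\|f\|_{L^{p_1}}\|h\|_{L^{p_2}}$. Here $s>1$ is used; the condition $1/s<1$ in the statement is exactly this.

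To find $T$, I will work on the Fourier side, with $\xi$ the frequency of $f$ and $\eta$ that of $h$; both are nonzero because of the mean-zero assumption. The symbol of $B$ is
\[
m(\xi,\eta)=i\Bigl(|\eta|\tfrac{\xi}{|\xi|}+\eta\Bigr),
\]
which vanishes on the resonant set $\xi+\eta=0$. This vanishing is the null structure. To make it explicit, I write $|\eta|=|\xi|+(|\eta|-|\xi|)$ and $|\eta|-|\xi|=\frac{(\eta-\xi)\cdot(\eta+\xi)}{|\xi|+|\eta|}$, which gives
\[
m(\xi,\eta)=i(\xi+\eta)_j\Bigl[\delta_{jk}+\frac{(\eta-\xi)_j}{|\xi|+|\eta|}\,\frac{\xi_k}{|\xi|}\Bigr].
\]
Hence $T_{jk}(f,h)=\delta_{jk}fh+S_j(\mathcal R_kf,h)$, where $S_j$ is the bilinear multiplier with symbol $\sigma_j(\xi,\eta)=\frac{(\eta-\xi)_j}{|\xi|+|\eta|}$. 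The term $fh$ is controlled by H\"older's inequality. For the second term, $\mathcal R_kf$ is bounded in $L^{p_1}$ by the Riesz transform bound. It therefore remains to show that $S_j$ maps $L^{p_1}\times L^{p_2}\to L^s$.

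The symbol $\sigma_j$ is homogeneous of degree zero and smooth on $\mathbb R^4\setminus\{0\}$. It is therefore a Coifman--Meyer symbol, $|\partial^\alpha_\xi\partial^\beta_\eta\sigma|\lesssim(|\xi|+|\eta|)^{-|\alpha|-|\beta|}$, and equivalently a Calder\'on-commutator-type operator. On $\mathbb R^2$ the Coifman--Meyer theorem gives the bound $L^{p_1}\times L^{p_2}\to L^s$ for $1<p_1,p_2<\infty$ and $s>1/2$, in particular for our $s>1$. The step I expect to be the main obstacle is passing this bound to $\mathbb T^2$. I would use the transference principle for bilinear multipliers (Fan--Sato type), which applies because $\sigma_j$ is continuous at every point of $\mathbb Z^2\times\mathbb Z^2$ where it is sampled (we never sample at $\xi=0$ or $\eta=0$). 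If continuity at the origin causes trouble, a fallback is to modify $\sigma_j$ by a smooth cutoff near $(0,0)$; this changes nothing on the lattice points with $\xi,\eta\neq0$ and yields a symbol that is continuous everywhere. Finally, the $\mathcal R^\perp$ version follows from the same computation: the symbol is $i\bigl(|\eta|\xi^\perp/|\xi|+\eta^\perp\bigr)$, and it admits the analogous factorization with $\xi_k/|\xi|$ replaced by $\xi^\perp_k/|\xi|$ and $\delta_{jk}$ by the rotation matrix.
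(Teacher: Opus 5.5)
Your algebra is correct. With $h=\Lambda^{-1}g$, the symbol of $B$ is $i(|\eta|\xi/|\xi|+\eta)$, it vanishes on $\xi+\eta=0$, and the factorization $m_k=i(\xi+\eta)_j\bigl[\delta_{jk}+\sigma_j(\xi,\eta)\,\xi_k/|\xi|\bigr]$ holds. Equivalently, $B_k-\partial_k(fh)=(\Lambda h)\,\mathcal R_kf-h\,\Lambda\mathcal R_kf$.

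\textbf{The gap.} You assert that $\sigma_j=(\eta-\xi)_j/(|\xi|+|\eta|)$ is smooth on $\mathbb R^4\setminus\{0\}$ and is therefore a Coifman--Meyer symbol. This is false. The denominator $|\xi|+|\eta|$ is not differentiable on the planes $\{\xi=0\}$ and $\{\eta=0\}$, and these meet the unit sphere of $\mathbb R^4$ in two circles. At a point $(0,\eta)$ with $\eta\neq0$:
\begin{itemize}
\item $\partial_{\xi_a}\sigma_j$ contains the term $-(\eta-\xi)_j\xi_a/\bigl(|\xi|(|\xi|+|\eta|)^2\bigr)$, whose limit depends on the direction of $\xi$;
\item the second $\xi$-derivatives have size about $|\xi|^{-1}|\eta|^{-1}$, instead of the required $O\bigl((|\xi|+|\eta|)^{-2}\bigr)$.
\end{itemize}
So the Coifman--Meyer theorem cannot be invoked. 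Fan--Sato transference fails with it, since it presupposes a bounded Euclidean multiplier. The same defect is present in your $\mathcal R^\perp$ version. Nor is this a removable artefact of your particular factorization. The roughness comes from $|\eta|$ and $\xi/|\xi|$ in $m$ itself, and it sits exactly in the lopsided regions $|\xi|\ll|\eta|$ and $|\eta|\ll|\xi|$.

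\textbf{How to repair it.} The estimate is true, and your idea survives if you use the null structure only where it is needed.
\begin{itemize}
\item Split $B$ into low--high, high--high and high--low paraproduct pieces, using periodic Littlewood--Paley theory on $\mathbb T^2` directly; this also removes the need for transference.
\item In the two lopsided pieces, the output frequency is comparable to the larger input frequency $2^k$. So $\mathcal R_0$ gains $2^{-k}$ directly, with no cancellation needed.
\item The low-frequency factors $2^{-k}\Lambda P_{<k-10}h$, $2^{-k}\nabla P_{<k-10}h$ and $P_{<k-10}\mathcal Rf$ are dominated by maximal functions. For example, $2^{-k}\Lambda P_{<k}$ has kernel bounded by $2^{2k}\langle 2^kx\rangle^{-3}$. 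A square-function and Fefferman--Stein argument then closes these pieces.
\item Only the high--high piece, where the output frequency can be arbitrarily low, needs your factorization. There $|\xi|\sim|\eta|$, so $\sigma_j$ times the cutoff is a genuine Coifman--Meyer symbol.
\end{itemize}

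\textbf{Comparison with the paper.} The paper avoids symbols altogether. It writes $g=\operatorname{div}g_1$ with $g_1=\nabla\Delta^{-1}g$ and integrates by parts in physical space. This produces a Calder\'on--Zygmund operator applied to $g_1\mathcal Rf$, plus a Calder\'on-commutator bilinear form built on $\mathcal K(x-z)-\mathcal K(x-y)$. It bounds the latter by the Christ--Journ\'e multilinear estimate of Seeger--Smart--Street. That kernel formulation is insensitive to the non-smoothness of the symbol on the axes.
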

	\begin{proof}
		We argue componentwise. Set
		\[
		h:=g\mathcal Rf+f\mathcal Rg.
		\]
		First, \(h\) has zero spatial mean. Indeed, for every
		\(\ell\in\{1,2\}\), the Riesz transform \(\mathcal R_\ell\) is
		skew-adjoint, and therefore
		\[
		\begin{aligned}
			\int_{\mathbb T^2}h_\ell\,dx
			&=
			\int_{\mathbb T^2}g\mathcal R_\ell f\,dx
			+
			\int_{\mathbb T^2}f\mathcal R_\ell g\,dx \\
			&=
			-\int_{\mathbb T^2}f\mathcal R_\ell g\,dx
			+
			\int_{\mathbb T^2}f\mathcal R_\ell g\,dx
			=0.
		\end{aligned}
		\]
		Thus \(\mathcal R_0h\) is well defined.
		
		Define the vector field
		\[
		g_1:=\nabla\Delta^{-1}g=-\nabla\Lambda^{-2}g.
		\]
		Since \(g\) is mean-free,
		\[
		\operatorname{div}g_1
		=
		\Delta\Delta^{-1}g
		=
		g.
		\]
		Moreover,
		\[
		\|g_1\|_{L^{p_2}}
		=
		\|\nabla\Lambda^{-2}g\|_{L^{p_2}}
		\lesssim_{p_2}
		\|\Lambda^{-1}g\|_{L^{p_2}}.
		\]
		Since the Riesz transforms commute with derivatives, we have
		\[
		\mathcal R_\ell g
		=
		\mathcal R_\ell\partial_jg_{1j}
		=
		\partial_j\mathcal R_\ell g_{1j}.
		\]
		Consequently,
		\begin{equation}\label{eq:null-form-divergence-decomposition}
			h_\ell
			=
			\partial_jg_{1j}\,\mathcal R_\ell f
			+
			f\,\partial_j\mathcal R_\ell g_{1j}.
		\end{equation}
		
		Let \(\mathcal K_{ab\ell}\) denote the periodic convolution kernel of
		the operator \(\mathcal R_0\), so that
		\[
		(\mathcal R_0h)_{ab}(x)
		=
		\int_{\mathbb T^2}
		\mathcal K_{ab\ell}(x-y)h_\ell(y)\,dy.
		\]
		Let \(\mathscr K_\ell\) denote the periodic kernel of
		\(\mathcal R_\ell\). All repeated indices are summed. The calculations
		below are first performed with smooth truncations of the kernels; the
		estimates are uniform in the truncation parameter, so the identities
		follow by passing to the limit.
		
		Using \eqref{eq:null-form-divergence-decomposition}, write
		\[
		(\mathcal R_0h)_{ab}=I_{ab}+II_{ab},
		\]
		where
		\[
		I_{ab}(x)
		:=
		\int_{\mathbb T^2}
		\mathcal K_{ab\ell}(x-y)
		\partial_jg_{1j}(y)
		\mathcal R_\ell f(y)\,dy
		\]
		and
		\[
		II_{ab}(x)
		:=
		\int_{\mathbb T^2}
		\mathcal K_{ab\ell}(x-y)
		f(y)\partial_j\mathcal R_\ell g_{1j}(y)\,dy.
		\]
		
		Integrating by parts in the first term gives
		\[
		\begin{aligned}
			I_{ab}(x)
			&=
			-\int_{\mathbb T^2}
			g_{1j}(y)
			\partial_{y_j}
			\left[
			\mathcal K_{ab\ell}(x-y)
			\mathcal R_\ell f(y)
			\right]dy \\
			&=
			-\iint_{\mathbb T^2\times\mathbb T^2}
			g_{1j}(y)
			\partial_{y_j}
			\left[
			\mathcal K_{ab\ell}(x-y)
			\mathscr K_\ell(y-z)
			\right]
			f(z)\,dz\,dy.
		\end{aligned}
		\]
		On the other hand,
		\[
		II_{ab}(x)
		=
		\iint_{\mathbb T^2\times\mathbb T^2}
		\mathcal K_{ab\ell}(x-y)
		f(y)\partial_{y_j}\mathscr K_\ell(y-z)
		g_{1j}(z)\,dz\,dy.
		\]
		Interchanging \(y\) and \(z\), and using that
		\(\mathscr K_\ell\) is odd and hence
		\(\partial_j\mathscr K_\ell\) is even, we obtain
		\[
		II_{ab}(x)
		=
		\iint_{\mathbb T^2\times\mathbb T^2}
		g_{1j}(y)f(z)
		\mathcal K_{ab\ell}(x-z)
		\partial_{y_j}\mathscr K_\ell(y-z)
		\,dy\,dz.
		\]
		Combining the two identities yields
		\[
		\begin{aligned}
			(\mathcal R_0h)_{ab}(x)
			={}&
			-\iint
			g_{1j}(y)f(z)
			\mathscr K_\ell(y-z)
			\partial_{y_j}\mathcal K_{ab\ell}(x-y)
			\,dy\,dz \\
			&+
			\iint
			g_{1j}(y)f(z)
			\left[
			\mathcal K_{ab\ell}(x-z)
			-
			\mathcal K_{ab\ell}(x-y)
			\right]
			\partial_{y_j}\mathscr K_\ell(y-z)
			\,dy\,dz.
		\end{aligned}
		\]
		Since
		\[
		\partial_{y_j}\mathcal K_{ab\ell}(x-y)
		=
		-\partial_{x_j}\mathcal K_{ab\ell}(x-y),
		\]
		we may write
		\begin{equation*}
			(\mathcal R_0h)_{ab}
			=
			T^{(1)}_{ab}+T^{(2)}_{ab},
		\end{equation*}
		where
		\[
		T^{(1)}_{ab}
		=
		\partial_j\mathcal K_{ab\ell}
		*
		\bigl(g_{1j}\mathcal R_\ell f\bigr)
		\]
		and
		\[
		\begin{aligned}
			T^{(2)}_{ab}(x)
			:=
			\iint_{\mathbb T^2\times\mathbb T^2}
			&g_{1j}(y)f(z)
			\left[
			\mathcal K_{ab\ell}(x-z)
			-
			\mathcal K_{ab\ell}(x-y)
			\right] \\
			&\qquad\qquad\times
			\partial_{y_j}\mathscr K_\ell(y-z)
			\,dy\,dz.
		\end{aligned}
		\]
		
		We first estimate \(T^{(1)}\). Since \(\mathcal R_0\) has order
		\(-1\), the kernel \(\partial_j\mathcal K_{ab\ell}\) defines a
		zero-order Calderón--Zygmund operator. Therefore, using Hölder's
		inequality and the boundedness of the Riesz transforms,
		\[
		\begin{aligned}
			\|T^{(1)}\|_{L^s}
			&\lesssim_s
			\|g_1\mathcal Rf\|_{L^s} \\
			&\le
			\|g_1\|_{L^{p_2}}
			\|\mathcal Rf\|_{L^{p_1}} \\
			&\lesssim_{p_1,p_2}
			\|g_1\|_{L^{p_2}}
			\|f\|_{L^{p_1}}.
		\end{aligned}
		\]
		
		For the second term, we use the bilinear Christ--Journ\'e commutator
		estimate; see~\cite[Theorem~1.1]{SSS19}, as well as the general-kernel
		formulation in~\cite[Theorem~2.8]{SSS19}. In the notation needed here,
		this estimate reads
		\begin{equation}\label{eq:bilinear-Calderon-commutator}
			\begin{aligned}
				\Bigg\|
				\iint F_j(y)F_0(z)
				\big[
				\mathcal K(x-z)-\mathcal K(x-y)
				\big]
				\partial_{y_j}\mathscr K(y-z)
				\,dy\,dz
				\Bigg\|_{L^s_x}
				\\
				\lesssim_{p_1,p_2}
				\|F_0\|_{L^{p_1}}
				\|F\|_{L^{p_2}},
			\end{aligned}
		\end{equation}
		whenever
		\[
		1<p_1,p_2<\infty,
		\qquad
		\frac1s=\frac1{p_1}+\frac1{p_2}<1.
		\]
		Indeed, pairing the expression on the left-hand side of
		\eqref{eq:bilinear-Calderon-commutator} with
		$F_2\in L^{s'}(\mathbb R^2)$ produces a trilinear
		Christ--Journ\'e-type singular integral form. The exponents satisfy
		\[
		\frac1{p_1}+\frac1{p_2}+\frac1{s'}=1,
		\]
		so the cited multilinear estimate, followed by duality, gives
		\eqref{eq:bilinear-Calderon-commutator} in the Euclidean setting.
		For completeness, the cancellation in
		\eqref{eq:bilinear-Calderon-commutator} follows from the first-order
		difference identity
		\[
		\begin{aligned}
			\mathcal K(x-z)-\mathcal K(x-y)
			=
			\int_0^1
			(y-z)_m
			\partial_m\mathcal K
			\bigl(x-y+t(y-z)\bigr)\,dt.
		\end{aligned}
		\]
		Indeed, multiplication by \(y-z\) cancels one derivative of
		\(\partial_j\mathscr K(y-z)\), and the resulting kernel is a standard
		bilinear Calderón commutator kernel. Its periodic version follows by
		decomposing each periodic kernel into its Euclidean singular part and
		a smooth remainder; the latter is estimated directly by Hölder's
		inequality.
		
		Applying \eqref{eq:bilinear-Calderon-commutator} with
		\[
		F=g_1,
		\qquad
		F_0=f,
		\]
		gives
		\[
		\|T^{(2)}\|_{L^s}
		\lesssim_{p_1,p_2}
		\|g_1\|_{L^{p_2}}
		\|f\|_{L^{p_1}}.
		\]
		Combining this estimate with the bound for \(T^{(1)}\), we obtain
		\[
		\left\|
		\mathcal R_0
		\bigl(g\mathcal Rf+f\mathcal Rg\bigr)
		\right\|_{L^s}
		\lesssim_{p_1,p_2}
		\|g_1\|_{L^{p_2}}
		\|f\|_{L^{p_1}}.
		\]
		Finally,
		\[
		\|g_1\|_{L^{p_2}}
		\lesssim_{p_2}
		\|\Lambda^{-1}g\|_{L^{p_2}},
		\]
		and hence
		\[
		\left\|
		\mathcal R_0
		\bigl(g\mathcal Rf+f\mathcal Rg\bigr)
		\right\|_{L^s}
		\lesssim_{p_1,p_2}
		\|f\|_{L^{p_1}}
		\|\Lambda^{-1}g\|_{L^{p_2}}.
		\]
		
		The proof for
		\(\mathcal R^\perp=\nabla^\perp\Lambda^{-1}\) is identical: its
		kernel is obtained from the Riesz kernel by a fixed \(90^\circ\)
		rotation, so it satisfies the same size, smoothness, oddness, and
		Calderón--Zygmund estimates.
	\end{proof}

\end{document}